\renewcommand\part{%
   \if@noskipsec \leavevmode \fi
   \par
   \addvspace{4ex}%
   \@afterindentfalse
   \secdef\@part\@spart}
\def\@part[#1]#2{%
    \ifnum \c@secnumdepth >\m@ne
      \refstepcounter{part}%
      \addcontentsline{toc}{part}{\thepart\hspace{1em}#1}%
    \else
      \addcontentsline{toc}{part}{#1}%
    \fi
    {\parindent \z@ \raggedright
     \interlinepenalty \@M
     \normalfont
     \ifnum \c@secnumdepth >\m@ne
    \bfseries \partname\nobreakspace\thepart
       \par\nobreak
     \fi
      \bfseries #1%
     \par}%
    \nobreak
    \vskip 3ex
    \@afterheading}
\def\@spart#1{%
    {\parindent \z@ \raggedright
     \interlinepenalty \@M
     \normalfont
     \huge \bfseries #1\par}%
     \nobreak
     \vskip 3ex
     \@afterheading}
\DeclareMathAlphabet{\mathbbm}{U}{bbm}{m}{n}
\definecolor{CadetBlue}{cmyk}{0.62, 0.57, 0.23, 0 }
\definecolor{black}{cmyk}{1, 0.5, 0, 0 }
\definecolor{RedViolet}{cmyk}{0.07, 0.9, 0, 0.34 }
\definecolor{SeaGreen}{cmyk}{0.69, 0, 0.5, 0}
\DeclareMathAlphabet{\mathpzc}{OT1}{pzc}{m}{it}
\newcommand{\C}{\mathbb C}
\newcommand{\D}{\mathbb D}
\newcommand{\F}{\mathbb F}
\newcommand{\N}{\mathbb N}
\newcommand{\Z}{\mathbb Z}
\newcommand{\I}{\mathbb I}
\newcommand{\e}{\upvarepsilon}
\newcommand{\gotp}{\mathfrak{p}}
\newcommand{\gota}{\mathfrak{a}}
\newcommand{\gotm}{\mathfrak{m}}
\newcommand{\gotq}{\mathfrak{q}}
\newcommand{\gotn}{\mathfrak{n}}
\newcommand{\gotP}{\mathfrak{P}}
\newtheorem{theo}{Theorem}
\numberwithin{theo}{section}
\newtheorem{lemm}{Lemma}
\numberwithin{lemm}{section}
\newtheorem{prop}{Proposition}
\numberwithin{prop}{section}
\newtheorem{coro}{Corollary}
\numberwithin{coro}{section}
\numberwithin{rem}{section}
\theoremstyle{definition}
\numberwithin{defi}{section}
\numberwithin{axio}{section}
\theoremstyle{remark}
\numberwithin{nota}{section}
\numberwithin{exam}{section}
\newtheorem{note}{Note}
\numberwithin{note}{section}
\numberwithin{aside}{section}
\numberwithin{rema}{section}
\newtheorem*{clai}{Claim}
\title{Explicit Class Field Theory for Orders in Global Function Fields}
\author{L. Demangos}
\address{Xi'an Jiaotong - Liverpool University, Department of Mathematical Sciences, Mathematics Building Block B, 111 Ren'ai Road, Suzhou Dushu Lake Science
and Education Innovation District, Suzhou Industrial Park, Suzhou, Peoples Republic of China, 215123}
\email{Luca.Demangos@xjtlu.edu.cn}
\author{T.M. Gendron}
\address{Instituto de Matem\'{a}ticas -- Unidad Cuernavaca, Universidad
Nacional Aut\'{o}noma de M\'{e}xico, Av. Universidad S/N, C.P. 62210
Cuernavaca, Morelos, M\'{e}xico}
\email{tim@matcuer.unam.mx}
\subjclass[2010]{Primary 11R37, 11R80, 11R58, 11F03; Secondary 11K60}
\date{\today}
\keywords{quantum Drinfeld module, ray class field, function field arithmetic}
\begin{document}
\vspace{2cm}

\begin{abstract}
This paper develops explicit class field theory for orders: of rank 1 in any global function field -- Hayes theory --  and of rank 2 in real quadratic function fields
-- Real Multiplication.
The essential ingredient in the development of the Hayes Theory is an orders version of Shimura's Main Theorem on Complex Multiplication, see \S \ref{s:D}.   The section on Real Multiplication
for orders uses values of the 
quantum modular invariant to generate  the Hilbert class field
of a rank 2 order contained in $\mathcal{O}_{K}$ = the integral closure of $\F_{q}[T]$. 
\end{abstract}
 \maketitle 
\tableofcontents
 \section*{Introduction}
Hayes theory, an explicit class field theory in positive characteristic, is  an outgrowth of Drinfeld's initial breakthrough \cite{Drin}, where \emph{elliptic modules}  -- originally introduced as a tool to provide a complete proof of the Langlands conjectures for ${\rm GL}_{2}$ over a global field in positive characteristic -- give rise to the natural analog of the classical theory of elliptic curves over a number field. 

While E.\ U.\ Gekeler introduced a \emph{modular invariant} for rank 2 Drinfeld modules with Complex Multiplication, in order to generate class fields of  \emph{imaginary quadratic global function fields}  \cite{Gek} in the style of the classical Theorem of Weber-Fueter, V. Drinfeld   \cite{Drin} and D. Hayes \cite{Hayes} explored alternative approaches to generate class fields over \emph{any} global function field $K$.  This was accomplished by replacing the Dedekind domain $\mathcal{O}_{K}$ by the Dedekind domain $A_{\infty}\subset\mathcal{O}_{K}$ of functions regular outside of a point $\infty\in\Upsigma_{K}$ = the curve associated to $K$.   Each such $A_{\infty}$ gives rise to its own collection of dedicated class fields, for which an explicit description can be made, and whose union gives the maximal abelian extension of $K$ unramified over $\infty$.

The key point is that the Dedekind domain $A_{\infty}$ embeds discretely into $\C_{\infty}$ (the function field complexes) as a non cocompact lattice, to which there is associated
an exponential that defines an analytic isomorphism 
\begin{align}\label{DWiso} (\C_{\infty},+)/A_{\infty} \cong (\C_{\infty},+)\end{align}which mirrors the behavior of the classical Weierstrass map in the new setting.  This isomorphism allows one to push forward
the analytic $A_{\infty}$-module
structure on  the left hand side of (\ref{DWiso}) to an algebraic $A_{\infty}$-module structure on the right hand side of (\ref{DWiso}), which is expressed by a representation of $A_{\infty}$
in the algebra of polynomials in the Frobenius $\uptau$,
\[   \uprho: A_{\infty}\longrightarrow \C_{\infty}\{ \uptau\}.\]
This is the Drinfeld module $\D$ associated to $A_{\infty}$; it gives rise to an analog of the theory of elliptic curves with Complex Multiplication.  

With this structure in hand, Hayes proved an analog of the Kronecker-Weber Theorem for {\it any} finite extension $K/\F_{q}(T)$.  In this setting, the Drinfeld module $\D$
is normalized by a suitable transcendental element $\upxi_{A_{\infty}}$ which plays the role of $\uppi$.  The 
Hilbert class field $H_{A_{\infty}}$ is generated by the coefficients of $\uprho (a)$ for any non-constant $a\in A_{\infty}$ and the
 ray class field $K_{\mathfrak{m}}$ associated to an ideal $\mathfrak{m}\subset A_{\infty}$ is generated over $H_{A_{\infty}}$ by $\D[\mathfrak{m}]$ =  the (cyclic)  module of $\mathfrak{m}$-torsion.



It is of interest to extend Hayes' Theory to orders $R\subset A_{\infty}$:  indeed, Hayes himself initiated such a study.  In  \cite{Hayes0}, Hayes developed the theory of Drinfeld modules
over $R$, proved the generation result for $H_{R}$ but only gave ray class field generation in the case $R=A_{\infty}$.  

The main aim of this paper is to 
conclude Hayes' original work by proving the class field generation result for arbitrary $R\subset A_{\infty}$.   In doing so, we revamp Hayes' original account by introducing
 the modular invariant $j(\mathfrak{a})$ of an {\it invertible} $R$-ideal $\mathfrak{a}$ and show that $H_{R}= K(j(\mathfrak{a}))$, which makes the theory closer in spirit with that of elliptic curves, via
 the Weber-Feuter Theorem.  The main tool we use in the ray class field generation result is a version of Shimura's Main Theorem valid in this setting, proved here in \S \ref{s:D}.
 
In \S \ref{HigherHCF}, as an application, we prove a generalization of the main theorem of \cite{DGIII} to the order context: if $K/\F_{q}(T)$ is {\it real} and quadratic,
$f\in\mathcal{O}_{K}$ a fundamental unit and $\mathcal{O}_{f}=\F_{q}[T,f,f^{-1}]\subset \mathcal{O}_{K}$ then
\[ H_{\mathcal{O}_{f}}  = K \left( \prod_{\upalpha\in j^{\rm qt}(f)} \upalpha \right).\]
This result, as well as the Hayes Theory for orders, are essential to the generation of ray class fields over rank 2 rings,  where the theory we develop only works for orders of the form
$\mathcal{O}_{f}$ and not $\mathcal{O}_{K}$, see \cite{DGIV}.

 \section{Orders in Function Fields}\label{s:A}

In this section we state and if necessary, prove, some general facts about orders in function fields.  In what follows, we fix a curve $\Upsigma_{K}$ over $\F_{q}$ with ${\rm Rat}(\Upsigma_{K})= K$,  a point $\infty\in\Upsigma_{K}$ and the Dedekind domain 
\[  A= A_{\infty}= \{\text{elements of $K$ regular outside of $\infty$}\} .\]

Let $R\subset A$ be an {\bf {\em order}}: by definition a subring with $1$ whose field of quotients is $K$.
In what follows ${\rm ord}_{\infty}(f) $ is the order of $f\in K$ at $\infty$.  By definition, for all $f\in A$, ${\rm ord}_{\infty}(f) \leq 0$.

\begin{lemm}\label{EventEveryOrder} There exists $N\in\N$ such that for all $k\geq N$, there exists $g\in R$ with ${\rm ord}_{\infty}(g) =-k$.
\end{lemm}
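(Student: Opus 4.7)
The plan is to realize $G := \{-{\rm ord}_{\infty}(g) : g \in R \setminus \{0\}\}$ as a numerical submonoid of $\Z_{\geq 0}$ whose positive elements have greatest common divisor $1$, and then invoke the classical Sylvester-Frobenius theorem on numerical semigroups to conclude that $G$ contains every sufficiently large integer.

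First, I would check that $G$ is an additive submonoid of $\Z_{\geq 0}$. Since $1 \in R$ and ${\rm ord}_{\infty}(1)=0$, we have $0 \in G$. Since $R \subseteq A$ is an integral domain, for any $g_{1}, g_{2} \in R\setminus\{0\}$ the product $g_{1}g_{2}$ lies in $R\setminus\{0\}$, and additivity of ${\rm ord}_{\infty}$ gives $-{\rm ord}_{\infty}(g_{1}g_{2}) = -{\rm ord}_{\infty}(g_{1}) + (-{\rm ord}_{\infty}(g_{2}))$, so $G$ is closed under addition. The containment $G \subseteq \Z_{\geq 0}$ is the already-stated inequality ${\rm ord}_{\infty}(f)\leq 0$ for $f\in A\supseteq R$.

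The central step is to show that $\gcd(G\setminus\{0\})=1$, equivalently, that the subgroup of $\Z$ generated by $G$ equals $\Z$. Here the hypothesis that $R$ is an order, i.e.\ has field of fractions $K$, is decisive. The normalized valuation ${\rm ord}_{\infty}\colon K^{\times}\twoheadrightarrow \Z$ is surjective (standard for places of global function fields), so for any $n\in\Z$ we may choose $x\in K^{\times}$ with ${\rm ord}_{\infty}(x)=n$, then write $x=g/h$ with $g,h\in R\setminus\{0\}$. This produces
\[ n={\rm ord}_{\infty}(g)-{\rm ord}_{\infty}(h)=\bigl(-{\rm ord}_{\infty}(h)\bigr)-\bigl(-{\rm ord}_{\infty}(g)\bigr)\in G-G, \]
showing that $G$ generates $\Z$ as a group, and hence that $\gcd(G\setminus\{0\})=1$.

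Finally, I invoke the classical numerical semigroup theorem (Sylvester-Frobenius): any additive submonoid of $\Z_{\geq 0}$ containing $0$ whose nonzero elements have greatest common divisor $1$ contains every integer beyond some threshold $N$. Applied to $G$, this delivers the required $N$. The only substantive step in the plan is the $\gcd$ computation, which rests on the surjectivity of ${\rm ord}_{\infty}$ on $K^{\times}$ together with the order hypothesis; the remaining steps are either formal consequences of $R$ being a subring of $A$ or follow from a standard result about numerical semigroups.
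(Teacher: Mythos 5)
Your proposal is correct and follows essentially the same route as the paper: both exploit that $K={\rm Frac}(R)$ together with surjectivity of ${\rm ord}_{\infty}$ to produce elements of $R$ whose pole orders are coprime, and then conclude by a numerical-semigroup argument. The only cosmetic difference is that the paper works with two elements of consecutive orders $-m$, $-m-1$ obtained from an element of order $-1$, while you take the full value monoid and invoke the general Sylvester--Frobenius theorem.
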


\begin{proof}
The field $K= {\rm Frac}(R)$ has an element of order $-1$ at $\infty$, therefore there exists $r,s\in R$ having orders $-m$, $-m-1$ at $\infty$.  
The set of valuations of $r^{n}s^{n'}$, $n,n'\in\N$ is the additive submonoid of $\Z$ generated by $m,m-1$.  Since $(m,m-1)=1$, the 
result follows. 
\end{proof}

\begin{coro}\label{FiniteModule} $(A,+)/(R,+)$ is a finite $R$-module, and in particular, $A$ is a finitely generated $R$-module. More generally, $R/\mathfrak{a}$ is finite for any $R$-ideal $\mathfrak{a}\subset R$.
\end{coro}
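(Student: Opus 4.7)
The plan is to reduce to comparing $R$ and $A$ as finitely generated modules over a common polynomial subring $\F_q[r]$ for a suitable $r \in R$, then invoke the structure theorem for modules over the PID $\F_q[r]$.

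First, using Lemma \ref{EventEveryOrder} I would pick any $r \in R$ with ${\rm ord}_{\infty}(r) < 0$. Since $r$ has a pole at $\infty$ it is transcendental over $\F_q$; hence $\F_q[r] \subset R$ and $K/\F_q(r)$ is a finite extension of some degree $n = [K:\F_q(r)]$. I would then identify $A$ with the integral closure $B$ of $\F_q[r]$ in $K$: because the unique pole of $r \in A$ is $\infty$, the place $\infty \in \Upsigma_K$ is the only place lying above the infinite place of $\F_q(r)$, so the places of $K$ lying over the \emph{finite} places of $\F_q(r)$ are exactly those different from $\infty$. Consequently the local description of $A$ (elements regular outside $\infty$) coincides with that of $B$ (elements in every local ring above a finite place of $\F_q(r)$). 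By standard results on integral closures over a regular Noetherian base, $A = B$ is then a finitely generated $\F_q[r]$-module of rank $n$.

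Next, I would show that $R$ has the same $\F_q[r]$-rank $n$. As an $\F_q[r]$-submodule of $A$, $R$ is finitely generated over the Noetherian ring $\F_q[r]$, so $V := R \otimes_{\F_q[r]} \F_q(r) \subset K$ is a finite-dimensional $\F_q(r)$-algebra. Being a localization of the integral domain $R$, $V$ is itself a domain; but a finite-dimensional integral domain over a field is a field, so $V$ is a subfield of $K$ containing $R$, and must therefore contain ${\rm Frac}(R) = K$. Hence $V = K$ and $\mathrm{rank}_{\F_q[r]}(R) = n$.

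Now $R \subset A$ are two finitely generated $\F_q[r]$-modules of equal rank, so $A/R$ is a finitely generated torsion $\F_q[r]$-module; by the structure theorem for modules over the PID $\F_q[r]$, $A/R \cong \bigoplus_i \F_q[r]/(p_i)$ is finite in cardinality, and $A$ is generated over $R$ by $1$ together with a finite set of coset representatives. For the last assertion, given a nonzero $R$-ideal $\mathfrak{a}$ and $0 \neq c \in \mathfrak{a}$, the nonzero constant term $b_0 \in \F_q[r]$ of a minimal monic integral equation for $c$ over $\F_q[r]$ lies in $(c) \subset \mathfrak{a}$, so $R/\mathfrak{a}$ is a quotient of $R/(b_0)$, itself a finitely generated module over the finite ring $\F_q[r]/(b_0)$ and hence finite. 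The main subtle step I anticipate is the identification of $A$ with the integral closure of $\F_q[r]$ together with the finite generation of this integral closure; once that is in hand, the rank-preservation trick for $R$ and the finiteness of $R/\mathfrak{a}$ follow formally.
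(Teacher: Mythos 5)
Your argument is correct, but it takes a genuinely different route from the paper's. The paper deduces the corollary directly from Lemma \ref{EventEveryOrder}: since $R$ contains elements of every pole order $-k$ at $\infty$ for $k\geq N$, every coset of $(A,+)/(R,+)$ has a representative whose pole order at $\infty$ is bounded by $N$, and the set of such elements of $A$ is finite (a finite-dimensional Riemann--Roch space over the finite field $\F_q$); the statement about $R/\mathfrak{a}$ is then obtained by running the same order argument inside $\mathfrak{a}$. You instead descend to the polynomial subring $\F_q[r]$ for a nonconstant $r\in R$, identify $A$ with the integral closure of $\F_q[r]$ in $K$ (correct, since $\infty$ is the only place of $K$ over the infinite place of $\F_q(r)$), invoke module-finiteness of that integral closure, and compare ranks to conclude that $A/R$ is a finitely generated torsion module over the PID $\F_q[r]$, hence finite; your handling of $R/\mathfrak{a}$ via the nonzero constant term of a minimal integral equation of some $0\neq c\in\mathfrak{a}$ is a clean alternative to the paper's repetition of the order argument. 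What the paper's proof buys is brevity and the fact that it never leaves the local picture at $\infty$; what yours buys is extra structural information ($R$ and $A$ are finitely generated $\F_q[r]$-modules of the same rank $[K:\F_q(r)]$) and near-independence from Lemma \ref{EventEveryOrder}, which you use only to produce a nonconstant element of $R$. One small caveat: the finiteness of the normalization should be credited to $\F_q[r]$ being a finitely generated algebra over a field (Noether's finiteness theorem, covering possibly inseparable extensions $K/\F_q(r)$), not merely to the base being regular and Noetherian -- for general regular one-dimensional Noetherian base rings module-finiteness of the integral closure can fail in inseparable extensions -- but in the present setting the fact is standard and your use of it is sound; also, like the paper, you implicitly (and necessarily) take $\mathfrak{a}\neq 0$ in the last statement.
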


\begin{proof}   By Lemma \ref{EventEveryOrder}, any coset of the quotient $R$-module $(A,+)/(R,+)$ has a representative of order $\geq -N$, where $N$ is as in the Lemma.  But the set of elements of $A$ having
order bounded below by $-N$ is finite, since $A$ is an $\F_{q}$ algebra whose 
 elements are those elements of $K$ regular outside of $\infty$.  Thus $(A,+)/(R,+)$ is finite.   The second statement follows from an identical argument, since any ideal $\mathfrak{a}\subset R$ also has the property
 described in the statement of Lemma \ref{EventEveryOrder}, of having elements of all orders less than some lower bound.
\end{proof}

\begin{coro}\label{AintegraloverR} $A$ is integral over $R$.  In particular, $\overline{R}=$ integral closure of $R$ = $A$.
\end{coro}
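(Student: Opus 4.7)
The plan is to deduce this immediately from Corollary \ref{FiniteModule} via the standard determinant (Cayley-Hamilton) trick, then invoke the fact that $A$ is a Dedekind domain, hence integrally closed in $K$.

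First I would establish $A\subseteq \overline R$. By Corollary \ref{FiniteModule}, $A$ is a finitely generated $R$-module; fix generators $a_{1},\dots,a_{n}$. For any $\upalpha\in A$, multiplication by $\upalpha$ is an $R$-linear endomorphism of $A$, so there exist $r_{ij}\in R$ with
\[  \upalpha\, a_{i}\;=\;\sum_{j}r_{ij}\,a_{j},\qquad i=1,\dots,n.\]
Writing this as $(\upalpha\,\mathrm{Id}-M)\mathbf{a}=0$ with $M=(r_{ij})\in M_{n}(R)$, the classical determinant trick gives $\det(\upalpha\,\mathrm{Id}-M)\cdot a_{j}=0$ for each $j$; since $1\in A$ is an $R$-linear combination of the $a_{j}$, we conclude $\det(\upalpha\,\mathrm{Id}-M)=0$. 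Expanding the determinant yields a monic polynomial identity for $\upalpha$ with coefficients in $R$, so $\upalpha$ is integral over $R$, i.e. $\upalpha\in\overline R$.

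For the reverse inclusion $\overline R\subseteq A$, I would use the fact (recalled in the setup of \S \ref{s:A}) that $A=A_{\infty}$ is a Dedekind domain, hence integrally closed in its field of fractions $K$. Any $\upbeta\in K$ integral over $R$ is \emph{a fortiori} integral over $A$ (since $R\subseteq A$), and therefore $\upbeta\in A$ by integral closedness. Combined with the previous paragraph this gives $\overline R = A$, which in particular contains the first assertion that $A$ is integral over $R$.

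There is no substantive obstacle here: the result is a direct corollary, and the only point worth noting is that the hypothesis \textquotedblleft $R$ is an order with $\mathrm{Frac}(R)=K$\textquotedblright{} is used only through Corollary \ref{FiniteModule}, together with the general fact that the Dedekind domain $A$ is integrally closed.
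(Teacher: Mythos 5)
Your proof is correct, but it takes a slightly different route to the inclusion $A\subseteq\overline{R}$ than the paper does. You invoke the second half of Corollary \ref{FiniteModule} ($A$ is a finitely generated $R$-module) and run the standard determinant/Cayley--Hamilton argument, being careful to note that $1\in A$ makes the module faithful so that $\det(\upalpha\,\mathrm{Id}-M)=0$. The paper instead uses the first half of that corollary directly: since the additive quotient $(A,+)/(R,+)$ is \emph{finite}, for any $f\in A$ two powers must coincide modulo $R$, say $f^{m}-f^{n}=r\in R$ with $m>n$, so $f$ satisfies the explicit monic polynomial $X^{m}-X^{n}-r\in R[X]$. The paper's pigeonhole argument is more elementary (no determinants, and it produces a concrete integral equation of binomial shape), while your argument is the more general mechanism, needing only module-finiteness rather than finiteness of the quotient; in the present setting the two inputs are both supplied by Corollary \ref{FiniteModule}, so either works. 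The second step is identical in both: any element of $K$ integral over $R$ is integral over $A$, and $A$ is Dedekind, hence integrally closed, so $\overline{R}=A$.
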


\begin{proof} Since $(A,+)/(R,+)$ is finite, given $f\in A$, we may write $f^{m}-f^{n} =r\in R$
for some $m,n\in\N$ hence $f$ satisfies $P(X)= X^{m}-X^{n}-r\in R[X]$.  Thus $A\subset \overline{R}$.  But since $R\subset A$ and $A$ is Dedekind, $\overline{R}=A$.
\end{proof}

\begin{coro}\label{Dim1Noeth} $R$ is a {\rm 1}-dimensional Noetherian domain.
\end{coro}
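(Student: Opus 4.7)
The plan is to verify the three assertions in turn: that $R$ is an integral domain, that $R$ is Noetherian, and that $\dim R = 1$. The first assertion is immediate since $R \subset A \subset K$ is a subring of a field.

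For Noetherianity, I would argue directly from Corollary \ref{FiniteModule} rather than invoke a heavier result such as Eakin-Nagata. Given any nonzero ideal $\mathfrak{a} \subset R$, pick some $0 \neq a \in \mathfrak{a}$, producing a chain $(a) \subset \mathfrak{a} \subset R$. By Corollary \ref{FiniteModule} applied to the principal ideal $(a)$, the quotient $R/(a)$ is finite, whence its $R$-submodule $\mathfrak{a}/(a)$ is a finite and thus finitely generated $R$-module. Adjoining $a$ to lifts of a finite generating set for $\mathfrak{a}/(a)$ yields a finite generating set for $\mathfrak{a}$ itself, proving $R$ is Noetherian.

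For the dimension count, one direction again relies on Corollary \ref{FiniteModule}: if $\mathfrak{p} \subset R$ is a nonzero prime ideal, then $R/\mathfrak{p}$ is a finite integral domain, hence a field, so $\mathfrak{p}$ is maximal. This gives $\dim R \leq 1$. To produce the reverse inequality I would observe that $R$ cannot itself be a field: otherwise we would have $R = \mathrm{Frac}(R) = K$, contradicting $R \subset A \subsetneq K$ (the second inclusion being strict because $A$ contains no function with a pole away from $\infty$). Since $R$ is a non-field domain, the chain $(0) \subsetneq \mathfrak{m}$ for any maximal $\mathfrak{m}$ shows $\dim R \geq 1$.

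I do not expect a genuine obstacle here: every step reduces to Corollary \ref{FiniteModule} together with the observation that $R$ is not a field. The only mildly delicate move is the trick in the Noetherian step of localizing the bookkeeping around a single chosen nonzero element $a \in \mathfrak{a}$, which neatly converts the finiteness statement of Corollary \ref{FiniteModule} into the finite-generation statement one actually needs.
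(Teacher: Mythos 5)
Your proof is correct, but the Noetherian step takes a genuinely different route from the paper. The paper invokes the Artin--Tate lemma (Proposition 7.8 of Atiyah--Macdonald, applied to $\F_{q}\subset R\subset A$, using Corollary \ref{FiniteModule} to see $A$ is a finite $R$-module) to conclude that $R$ is a finitely generated $\F_{q}$-algebra, hence Noetherian by the Hilbert basis theorem; you instead argue ideal by ideal, noting that for $0\neq a\in\mathfrak{a}$ the quotient $R/(a)$ is finite by the second statement of Corollary \ref{FiniteModule}, so $\mathfrak{a}/(a)$ is finite and lifting its elements together with $a$ gives a finite generating set. Your argument is more elementary and self-contained, avoiding the external citation altogether; the paper's argument buys the stronger by-product that $R$ is a finitely generated $\F_{q}$-algebra, a fact it appeals to again later (e.g.\ at the start of \S\ref{RCFofR}). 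For the dimension count both proofs are essentially the same -- finiteness of $R/\mathfrak{p}$ for a nonzero prime forces $\mathfrak{p}$ maximal -- except that you also verify explicitly that $R$ is not a field (so $\dim R\geq 1$), a point the paper leaves implicit; your justification via $R\subset A\subsetneq K={\rm Frac}(R)$ is sound.
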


\begin{proof}  By Corollary \ref{FiniteModule}, $A$ is a finitely generated $R$-module, and by Proposition 7.8
of \cite{AM}, $R$ is a finitely generated $\F_{q}$ algebra, hence Noetherian.   By  Lemma \ref{EventEveryOrder},  every principal ideal $aR$ has elements of all orders $-k\leq -N +{\rm ord}_{\infty}(a)$ 
hence $R/aR$ is a finite $R$-module.  In particular, if $\mathfrak{p}\subset R$ is a prime ideal, $R/\mathfrak{p}$ is a finite
integral domain, hence a field.  Thus $\mathfrak{p}$ is maximal, so the Krull dimension is 1.
\end{proof}

 \begin{prop}\label{contractionprop}  Every prime ideal of $ R$ is the contraction of a prime ideal in $A$.
 \end{prop}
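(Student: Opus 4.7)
The plan is to deduce the proposition from the classical Cohen--Seidenberg lying-over theorem. The key ingredient is already in place: Corollary \ref{AintegraloverR} asserts that $A$ is integral over $R$, so the inclusion $R\hookrightarrow A$ is an integral extension of domains, and lying-over then produces, for any prime $\mathfrak{p}\subset R$, a prime $\mathfrak{P}\subset A$ with $\mathfrak{P}\cap R=\mathfrak{p}$, which is exactly the content of the statement.

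Concretely, I would localize. Set $S=R\setminus\mathfrak{p}$, so that $R_{\mathfrak{p}}=S^{-1}R$ is local with unique maximal ideal $\mathfrak{p}R_{\mathfrak{p}}$. Because $R$ and $A$ share the fraction field $K$, $0\notin S$ and $S^{-1}A$ is a nonzero subring of $K$ which, by compatibility of integrality with localization, is still integral over $R_{\mathfrak{p}}$. Pick any maximal ideal $\mathfrak{m}\subset S^{-1}A$; by the standard fact that contraction of a maximal ideal along an integral extension remains maximal, $\mathfrak{m}\cap R_{\mathfrak{p}}=\mathfrak{p}R_{\mathfrak{p}}$. Pulling back along $A\to S^{-1}A$, the ideal $\mathfrak{P}:=\mathfrak{m}\cap A$ is a prime of $A$ whose contraction to $R$ is $\mathfrak{p}$.

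The only nonformal ingredient is integrality of $A$ over $R$, which is Corollary \ref{AintegraloverR}; everything else is routine commutative algebra (e.g.\ Atiyah--Macdonald, Propositions 5.6 and 5.8). For this reason I do not anticipate any substantive obstacle beyond citing these classical facts correctly; the proposition is essentially a direct corollary of the integrality already established in the previous corollary.
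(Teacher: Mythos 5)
Your proof is correct: once integrality of $A$ over $R$ (Corollary \ref{AintegraloverR}) is in hand, the classical lying-over theorem -- or your self-contained localization argument for it -- gives the statement, and the bookkeeping $\mathfrak{P}\cap R=\mathfrak{m}\cap A\cap R=\mathfrak{p}R_{\mathfrak{p}}\cap R=\mathfrak{p}$ is fine (the case $\mathfrak{p}=(0)$ is trivial since both rings are domains with fraction field $K$). The paper takes a slightly different, shorter route that stops short of invoking full lying-over: from integrality it cites only the fact that $\mathfrak{p}A\neq A$ (Proposition 9 of Lang), picks any prime $\widetilde{\mathfrak{p}}\subset A$ containing $\mathfrak{p}A$, and then uses Corollary \ref{Dim1Noeth} (Krull dimension $1$ of $R$) to force the contraction $\widetilde{\mathfrak{p}}\cap R$, a proper prime containing $\mathfrak{p}$, to equal $\mathfrak{p}$. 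So the paper trades the localization-and-maximality step of the Cohen--Seidenberg proof for the dimension-one fact already established, while your argument needs no dimension hypothesis and would work verbatim for any integral extension; both hinge on the same Corollary \ref{AintegraloverR}, so either derivation is acceptable here, with yours being marginally more general and the paper's marginally more economical given the results it has already proved.
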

 
 \begin{proof} 
$A$ is integral over $R$ by  Corollary \ref{AintegraloverR}, so  by Proposition 9 on page 9 of \cite{Lang},
 for any prime $\mathfrak{p}\subset R$, $\mathfrak{p}A\not= A$, hence there is a prime $\widetilde{\mathfrak{p}}\supset \mathfrak{p}A$.  By Corollary \ref{Dim1Noeth}, $R$ has Krull dimension 1, hence this prime contracts to $\mathfrak{p}$.
 \end{proof}
 
 We recall that the {\bf {\em conductor}} \[ \mathfrak{c}\subset R \]is the largest $A$-ideal contained in $R$.  The conductor is always non-zero.
 
 \begin{theo}\label{bijectionprimetoc} The contraction map 
 \begin{align}\label{contractionbijection}  A\supset \mathfrak{a}\longmapsto \mathfrak{a}_{R}:= \mathfrak{a}\cap R\subset R \end{align} 
 induces  a bijective, multiplicative correspondence between ideals of $A$ prime to $\mathfrak{c}$ and ideals of $R$ prime to $\mathfrak{c}$.  The inverse correspondence of {\rm (\ref{contractionbijection})} is given by the expansion,
 $\mathfrak{a}_{R}\mapsto  A\mathfrak{a}_{R}$.   The bijection {\rm (\ref{contractionbijection})} induces a bijection between prime ideals prime to the conductor.
\end{theo}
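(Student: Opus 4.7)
My plan is to establish that $\mathfrak{a}\mapsto\mathfrak{a}_{R}$ and $\mathfrak{b}\mapsto A\mathfrak{b}$ are mutually inverse bijections between the collections of ideals prime to $\mathfrak{c}$, and then to derive multiplicativity and the prime-ideal correspondence as consequences. The engine for every step is the coprimality decomposition $1=a+c$ with $a\in\mathfrak{a}$ and $c\in\mathfrak{c}$ (or with $b\in\mathfrak{b}$ in the $R$-case), combined with the \emph{dual} nature of $\mathfrak{c}$ as an $A$-ideal contained in $R$.

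For the identity $A\mathfrak{a}_{R}=\mathfrak{a}$, I would first note that $a=1-c\in R$ since $\mathfrak{c}\subset R$, so $a\in\mathfrak{a}_{R}$. For any $x\in\mathfrak{a}$, the decomposition $x=ax+cx$ places $ax\in A\cdot\mathfrak{a}_{R}$ and $cx\in\mathfrak{c}\mathfrak{a}\subset\mathfrak{c}\cap\mathfrak{a}\subset\mathfrak{a}_{R}$. For the reverse $(A\mathfrak{b})\cap R=\mathfrak{b}$, take $1=b+c$ with $b\in\mathfrak{b}$; for $x\in A\mathfrak{b}\cap R$, write $x=bx+cx$, noting $bx\in\mathfrak{b}$ since $x\in R$ and $cx\in cA\mathfrak{b}\subset\mathfrak{c}\mathfrak{b}\subset\mathfrak{b}$. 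The preservation of coprimality is immediate from the same relations: $1=a+c\in\mathfrak{a}_{R}+\mathfrak{c}$ and $1=b+c\in A\mathfrak{b}+\mathfrak{c}$.

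Multiplicativity of expansion is formal, $A(\mathfrak{b}_{1}\mathfrak{b}_{2})=(A\mathfrak{b}_{1})(A\mathfrak{b}_{2})$, and it transfers to contraction via the bijection. For the statement on primes: the contraction of a prime is always prime, which handles one direction. For the converse, given a nonzero prime $\mathfrak{q}\subset R$ prime to $\mathfrak{c}$, factor $A\mathfrak{q}=\prod\mathfrak{p}_{i}^{e_{i}}$ in the Dedekind domain $A$ (each $\mathfrak{p}_{i}$ necessarily prime to $\mathfrak{c}$) and contract using multiplicativity to get $\mathfrak{q}=\prod(\mathfrak{p}_{i})_{R}^{e_{i}}$. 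Each nonzero $(\mathfrak{p}_{i})_{R}$ is maximal by Corollary \ref{Dim1Noeth}, so the prime $\mathfrak{q}$ must coincide with some $(\mathfrak{p}_{i_{0}})_{R}$; re-expanding and invoking unique factorization in $A$ collapses the product to $A\mathfrak{q}=\mathfrak{p}_{i_{0}}$.

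I expect the genuine technical obstacle to be the bijectivity, specifically the identity $A\mathfrak{a}_{R}=\mathfrak{a}$, which is classically known to fail without coprimality to the conductor. The argument hinges on using $\mathfrak{c}$ in two incompatible-sounding ways at once: as an $A$-ideal (so that $cx\in\mathfrak{c}$ whenever $x\in A$, controlling the cross term $cx$) and as a subset of $R$ (so that $a=1-c$ automatically lands in $R$, placing $a$ in $\mathfrak{a}_{R}$). Once this identity is in hand, the remaining assertions---multiplicativity, the extension identity, and the prime correspondence---follow by combining standard Dedekind factorization in $A$ with the one-dimensional Noetherian structure of $R$ from Corollary \ref{Dim1Noeth}.
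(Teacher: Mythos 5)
Your proof is correct. The paper does not write out an argument at all---it simply appeals to Theorem 3.8 of \cite{Conrad1} and asserts the number-field proof for orders carries over formally---and your treatment of the bijection is exactly that standard argument: the decomposition $1=a+c$ (resp.\ $1=b+c$), the observation that $a=1-c\in R$ because $\mathfrak{c}\subset R$, and control of the cross terms through the dual role of $\mathfrak{c}$ as an $A$-ideal sitting inside $R$. Where you genuinely diverge is the prime-ideal statement: the cited treatment gets it by proving the ring isomorphism $R/\mathfrak{b}\cong A/(A\mathfrak{b})$ for $\mathfrak{b}$ prime to $\mathfrak{c}$, from which preservation of primality (and of indices) is immediate, whereas you factor $A\mathfrak{q}$ into primes of the Dedekind domain $A$, contract multiplicatively, and use maximality of nonzero primes of $R$ (Corollary \ref{Dim1Noeth}) to force $\mathfrak{q}=(\mathfrak{p}_{i_{0}})_{R}$, hence $A\mathfrak{q}=\mathfrak{p}_{i_{0}}$. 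Both are sound; your version buys the result from unique factorization in $A$ plus one-dimensionality of $R$, while the quotient-isomorphism route yields the extra information $R/\mathfrak{b}\cong A/(A\mathfrak{b})$ at no additional cost. One step you leave implicit when transferring multiplicativity from expansion to contraction: you need the product of two $R$-ideals prime to $\mathfrak{c}$ to again be prime to $\mathfrak{c}$, so that the identity $(A\mathfrak{b})\cap R=\mathfrak{b}$ may be applied to $\mathfrak{b}=(\mathfrak{a}_{1})_{R}(\mathfrak{a}_{2})_{R}$; this is the one-line computation $R=(\mathfrak{b}_{1}+\mathfrak{c})(\mathfrak{b}_{2}+\mathfrak{c})\subset\mathfrak{b}_{1}\mathfrak{b}_{2}+\mathfrak{c}$, and is worth stating explicitly.
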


\begin{proof}  The proof is formally the same as that for orders in rings of integers in number fields, for the latter see Theorem 3.8 of \cite{Conrad1}.
\end{proof}

For any fractional $R$-ideal $\mathfrak{a}$, define
\begin{align}\label{defstarop}  \mathfrak{a}^{\ast} = \{ \upbeta\in K\; :\;\; \upbeta\mathfrak{a}\subset R\}. \end{align}
Thus $ \mathfrak{a}^{\ast} \mathfrak{a}\subset R$;
if $ \mathfrak{a}^{\ast} \mathfrak{a}=R$, we say that $\mathfrak{a}$ is invertible and write $\mathfrak{a}^{-1}= \mathfrak{a}^{\ast}$.
All principal ideals of $R$ are invertible, and
if $R$ is Dedekind, every ideal is invertible.   

\begin{lemm}  Let $\mathfrak{b}\subset R$ be an ideal and
\[ R^{ \mathfrak{b}}:=\{ x\in K\; : \;\; x\mathfrak{b}\subset\mathfrak{b}\}. \]  Then $R\subset R^{ \mathfrak{b}}\subset A$ and
if $\mathfrak{b}$ is prime to the conductor,  $ R^{ \mathfrak{b}} =R$.  
\end{lemm}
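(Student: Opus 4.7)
The plan is to break the statement into the three inclusions $R\subset R^{\mathfrak{b}}$, $R^{\mathfrak{b}}\subset A$, and (under the coprimality hypothesis) $R^{\mathfrak{b}}\subset R$.

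The first inclusion is immediate: since $\mathfrak{b}$ is an $R$-ideal, every $r\in R$ satisfies $r\mathfrak{b}\subset \mathfrak{b}$, so $r\in R^{\mathfrak{b}}$.

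For $R^{\mathfrak{b}}\subset A$, I would use the standard determinant (Cayley--Hamilton) trick. By Corollary \ref{Dim1Noeth}, $R$ is Noetherian, so the ideal $\mathfrak{b}$ is a finitely generated $R$-module, say with generators $b_1,\dots,b_n$. For $x\in R^{\mathfrak{b}}\subset K$, the condition $x\mathfrak{b}\subset\mathfrak{b}$ gives equations $xb_i = \sum_j a_{ij}b_j$ with $a_{ij}\in R$. Multiplying the matrix identity $(x\delta_{ij}-a_{ij})(b_j) = 0$ by its adjugate yields $\det(x\delta_{ij}-a_{ij})\cdot b_k = 0$ for every $k$. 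Since $R$ is an integral domain and $\mathfrak{b}\neq 0$, this determinant must vanish, giving a monic polynomial relation for $x$ over $R$. Hence $x$ is integral over $R$, and by Corollary \ref{AintegraloverR} we have $x\in\overline{R}=A$.

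For the final and most delicate assertion, assume $\mathfrak{b}$ is prime to the conductor $\mathfrak{c}$, i.e.\ $\mathfrak{b}+\mathfrak{c}=R$. Choose $b\in\mathfrak{b}$ and $c\in\mathfrak{c}$ with $b+c=1$. Given $x\in R^{\mathfrak{b}}$, write
\[ x = xb + xc. \]
The first summand satisfies $xb\in x\mathfrak{b}\subset\mathfrak{b}\subset R$ by definition of $R^{\mathfrak{b}}$. For the second summand, we use the previous step to know $x\in A$, together with the defining property of the conductor $\mathfrak{c}$: it is an $A$-ideal, so $A\mathfrak{c}=\mathfrak{c}\subset R$, whence $xc\in\mathfrak{c}\subset R$. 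Therefore $x\in R$, proving $R^{\mathfrak{b}}\subset R$ and hence equality. The main subtlety is simply remembering to exploit the $A$-ideal (not merely $R$-ideal) nature of $\mathfrak{c}$; everything else is a straightforward transcription of the classical number-field argument.
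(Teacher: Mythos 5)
Your proposal is correct. The first inclusion and the conductor step are exactly the paper's: writing $b+c=1$ with $b\in\mathfrak{b}$, $c\in\mathfrak{c}$ and decomposing $x=xb+xc$, with $xb\in\mathfrak{b}\subset R$ and $xc\in A\mathfrak{c}=\mathfrak{c}\subset R$ once $x\in A$ is known. Where you diverge is the middle inclusion $R^{\mathfrak{b}}\subset A$: you establish integrality of $x$ over $R$ by the classical Cayley--Hamilton/adjugate trick applied to the finitely generated faithful module $\mathfrak{b}$ (finite generation coming from Noetherianity, Corollary \ref{Dim1Noeth}), whereas the paper instead observes $R\subset R^{\mathfrak{b}}\subset\mathfrak{b}^{\ast}\subset y^{-1}R$ for any $0\neq y\in\mathfrak{b}$, invokes the finiteness of $R/yR$ from Corollary \ref{FiniteModule} to conclude $(R^{\mathfrak{b}},+)/(R,+)$ is finite, and then pigeonholes the powers of $x$ to get a monic relation $x^{n}-x^{m}=r\in R$. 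Both routes then finish identically with Corollary \ref{AintegraloverR} ($\overline{R}=A$). Your determinant argument is the more general one (it needs only that $R$ is Noetherian and $\mathfrak{b}\neq 0$, not finiteness of residue rings), while the paper's argument reuses the characteristic-$p$ finiteness machinery it has already set up and mirrors the proof pattern of Corollary \ref{AintegraloverR}. Note that both proofs, like the statement itself, tacitly assume $\mathfrak{b}\neq 0$.
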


\begin{proof} $R^{ \mathfrak{b}}$ is a commutative ring with $1$ and clearly 
\[ \mathfrak{b}^{\ast}\supset R^{ \mathfrak{b}}\supset R,\] so $R^{ \mathfrak{b}}$ is an $R$-module.  We claim that the quotient $R$-module $(R^{ \mathfrak{b}},+)/(R,+)$ is finite: this will follow if
 $( \mathfrak{b}^{\ast} ,+)/ (R,+)$ is finite.  However, for any $y\in \mathfrak{b}$,  $( \mathfrak{b}^{\ast} ,+)/ (R,+)$
is contained in $(y^{-1}R,+)/(R,+) \cong R/yR$, which is finite by Corollary \ref{FiniteModule}.  Given $x\in R^{ \mathfrak{b}}$, all of the powers $x^{n}$, $n\geq 0$, define elements of $(R^{ \mathfrak{b}},+)/(R,+)$, so
there exist exponents $m,n$ with $x^{n}-x^{m}=r\in R$.  Thus $x$ is integral over $R$, hence $x\in A$ by Corollary \ref{AintegraloverR}.  If $\mathfrak{b}$ is prime to $\mathfrak{c}$, let $b\in \mathfrak{b}$, $c\in\mathfrak{c}$
with $b+c=1$.  Then $x = xb+xc\in R$.
\end{proof}



\begin{theo}  Let $0\not= \mathfrak{p}\subset R$ be prime.  Then $\mathfrak{p}$ is invertible $\Leftrightarrow$ $R^{\mathfrak{p}} =R$.
\end{theo}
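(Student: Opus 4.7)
The plan is to prove the two implications separately, with the reverse direction being the substantive one. For the forward implication, I would assume $\mathfrak{p}$ is invertible, so that $\mathfrak{p}^{-1}=\mathfrak{p}^{\ast}$ and $\mathfrak{p}\mathfrak{p}^{-1}=R$. Given any $x\in R^{\mathfrak{p}}$, the defining relation $x\mathfrak{p}\subset\mathfrak{p}$ yields $xR=x\mathfrak{p}\mathfrak{p}^{-1}\subset\mathfrak{p}\mathfrak{p}^{-1}=R$, hence $x\in R$. Combined with the trivial inclusion $R\subset R^{\mathfrak{p}}$ already noted in the preceding lemma, this gives $R^{\mathfrak{p}}=R$.

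For the converse, assume $R^{\mathfrak{p}}=R$. Observe that $R\subset\mathfrak{p}^{\ast}$, so $\mathfrak{p}^{\ast}\mathfrak{p}$ is an $R$-ideal sandwiched as $\mathfrak{p}\subset\mathfrak{p}^{\ast}\mathfrak{p}\subset R$. By Corollary \ref{Dim1Noeth} the nonzero prime $\mathfrak{p}$ is maximal, so only two options arise: $\mathfrak{p}^{\ast}\mathfrak{p}=\mathfrak{p}$ or $\mathfrak{p}^{\ast}\mathfrak{p}=R$. The strategy is to rule out the former by contradiction. If $\mathfrak{p}^{\ast}\mathfrak{p}=\mathfrak{p}$, then every $\upbeta\in\mathfrak{p}^{\ast}$ satisfies $\upbeta\mathfrak{p}\subset\mathfrak{p}$, placing $\upbeta$ inside $R^{\mathfrak{p}}=R$; this forces $\mathfrak{p}^{\ast}=R$, which I intend to contradict by exhibiting an element of $\mathfrak{p}^{\ast}\setminus R$.

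The production of such an element rests on the standard Noetherian fact that in a Noetherian domain every nonzero ideal contains a product of nonzero primes; this applies because $R$ is Noetherian by Corollary \ref{Dim1Noeth}. Choose $0\neq a\in\mathfrak{p}$ and pick nonzero primes $\mathfrak{p}_{1},\dots,\mathfrak{p}_{n}$ with $\mathfrak{p}_{1}\cdots\mathfrak{p}_{n}\subset aR\subset\mathfrak{p}$ and $n$ minimal. Since $\mathfrak{p}$ is prime, some $\mathfrak{p}_{i}\subset\mathfrak{p}$, and maximality (Corollary \ref{Dim1Noeth}) upgrades this to $\mathfrak{p}_{i}=\mathfrak{p}$; without loss of generality $i=1$. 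Minimality of $n$ then guarantees $\mathfrak{p}_{2}\cdots\mathfrak{p}_{n}\not\subset aR$, so I can pick $b\in\mathfrak{p}_{2}\cdots\mathfrak{p}_{n}$ with $b\notin aR$. The element $\upbeta:=b/a$ is then not in $R$, yet
\[ \upbeta\mathfrak{p}\;\subset\;\frac{1}{a}\,\mathfrak{p}\cdot\mathfrak{p}_{2}\cdots\mathfrak{p}_{n}\;=\;\frac{1}{a}\,\mathfrak{p}_{1}\cdots\mathfrak{p}_{n}\;\subset\;\frac{1}{a}(aR)\;=\;R, \]
so $\upbeta\in\mathfrak{p}^{\ast}\setminus R$, the desired contradiction.

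The main obstacle is precisely this last step: justifying the existence of a fractional element $\upbeta\in\mathfrak{p}^{\ast}\setminus R$. Once the Noetherian product-of-primes lemma is invoked, the rest is bookkeeping. I expect no difficulty transplanting that lemma to the present setting, since its proof uses only that $R$ is Noetherian and a domain, both already established.
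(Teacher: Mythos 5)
Your argument is correct and is essentially the proof the paper points to: the paper simply cites Theorem 3.4 of Conrad's conductor notes, whose proof runs exactly as yours does (invertibility kills $R^{\mathfrak{p}}$ by multiplying by $\mathfrak{p}^{-1}$; conversely, $\mathfrak{p}\subset\mathfrak{p}^{\ast}\mathfrak{p}\subset R$ with $\mathfrak{p}$ maximal, and the case $\mathfrak{p}^{\ast}\mathfrak{p}=\mathfrak{p}$ is excluded by producing $\upbeta\in\mathfrak{p}^{\ast}\setminus R$ via the minimal product-of-primes argument, using that $R$ is a $1$-dimensional Noetherian domain). No gaps beyond the standard, easily supplied details (the Noetherian product-of-primes lemma and the $n=1$ case, where the non-unit $a$ gives $R\not\subset aR$).
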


\begin{proof} Formally the same as the proof of Theorem 3.4 of \cite{Conrad1}.
\end{proof}

\begin{theo}\label{inverttheo} If $\mathfrak{b}\subset R$ is prime to $\mathfrak{c}$, then $\mathfrak{b}$ may be written as a product of invertible primes.  In particular, $\mathfrak{b}$ is invertible.
\end{theo}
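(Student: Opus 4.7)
The plan is to reduce to factorization in $A$ via the contraction/expansion correspondence of Theorem \ref{bijectionprimetoc}, then invoke the immediately preceding theorem and lemma to upgrade each prime factor to an invertible one.

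First, I would expand $\mathfrak{b}$ to $A$: let $\mathfrak{B}=A\mathfrak{b}\subset A$. Since $\mathfrak{b}$ is prime to $\mathfrak{c}$ and the bijection of Theorem \ref{bijectionprimetoc} is multiplicative, $\mathfrak{B}$ is an $A$-ideal prime to $\mathfrak{c}$. Because $A$ is Dedekind (Corollary \ref{AintegraloverR} identifies $A$ with the integral closure $\overline{R}$, and $A$ is already given as Dedekind), $\mathfrak{B}$ admits a factorization
\[\mathfrak{B}=\mathfrak{P}_{1}\cdots \mathfrak{P}_{n}\]
into prime ideals of $A$. Each $\mathfrak{P}_{i}$ necessarily contains $\mathfrak{B}$ and hence is prime to $\mathfrak{c}$ (otherwise $\mathfrak{P}_{i}\supset\mathfrak{c}$ would force $\mathfrak{B}+\mathfrak{c}\subset\mathfrak{P}_{i}\subsetneq A$, contradicting $\mathfrak{B}+\mathfrak{c}=A$).

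Next, I would contract back. Writing $\mathfrak{p}_{i}=\mathfrak{P}_{i}\cap R$, Theorem \ref{bijectionprimetoc} asserts that the inverse of expansion is contraction and that the bijection preserves products and primality on ideals prime to $\mathfrak{c}$. Therefore
\[\mathfrak{b}=\mathfrak{B}\cap R=\mathfrak{p}_{1}\cdots\mathfrak{p}_{n},\]
where each $\mathfrak{p}_{i}$ is a nonzero prime of $R$ prime to $\mathfrak{c}$.

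For each $i$, I would then show that $\mathfrak{p}_{i}$ is invertible. By the Lemma applied to $\mathfrak{p}_{i}$ (which is prime to $\mathfrak{c}$), one has $R^{\mathfrak{p}_{i}}=R$, and the preceding Theorem then yields that $\mathfrak{p}_{i}$ is invertible. Finally, since the product of invertible ideals is invertible (with inverse the product of the inverses in reversed order, which is immediate from the definition $\mathfrak{a}^{\ast}\mathfrak{a}=R$), the identity $\mathfrak{b}=\mathfrak{p}_{1}\cdots\mathfrak{p}_{n}$ exhibits $\mathfrak{b}$ as invertible.

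The only real subtlety is guaranteeing that each prime $\mathfrak{P}_{i}$ appearing in the Dedekind factorization of $\mathfrak{B}$ is itself prime to the conductor; this is the coprimality argument above and is the one step where the hypothesis on $\mathfrak{b}$ is truly used. Everything else is a mechanical application of the three results already established in this section.
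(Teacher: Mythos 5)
Your proof is correct, but it is not the argument the paper appeals to. The paper's proof is a citation: it declares the proof to be formally the same as that of Theorem 3.6 of \cite{Conrad1}, which in that reference precedes the contraction--expansion bijection (Conrad's Theorem 3.8, the source of the paper's Theorem \ref{bijectionprimetoc}) and is therefore carried out intrinsically in the order, by peeling off invertible maximal ideals: a maximal ideal $\mathfrak{p}\supset\mathfrak{b}$ is automatically prime to $\mathfrak{c}$, hence invertible by the two results stated just before Theorem \ref{inverttheo}, and one inducts on $\mathfrak{b}\mathfrak{p}^{-1}\supsetneq\mathfrak{b}$ using Noetherianness. You instead expand $\mathfrak{b}$ to the Dedekind domain $A$, factor there, observe each prime factor avoids $\mathfrak{c}$, and pull the factorization back through the multiplicative bijection of Theorem \ref{bijectionprimetoc}, finishing with the same invertibility criterion for the contracted primes. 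Both routes are sound; yours is shorter given that Theorem \ref{bijectionprimetoc} is already on record in the paper, and it makes the uniqueness statement of Corollary \ref{PrimeToConductor} essentially immediate, while the intrinsic argument has the virtue of being independent of the bijection (and indeed must be, given Conrad's ordering). Since you lean on the full strength of Theorem \ref{bijectionprimetoc} (multiplicativity and the fact that contraction inverts expansion), it is worth noting there is no circularity: that theorem's proof is a direct computation from $\mathfrak{b}+\mathfrak{c}=R$ and the fact that $\mathfrak{c}$ is an ideal of both $R$ and $A$, with no invertibility input. Two small touch-ups: the reason $A\mathfrak{b}$ is prime to $\mathfrak{c}$ is simply that $A\mathfrak{b}+\mathfrak{c}$ is an $A$-ideal containing $\mathfrak{b}+\mathfrak{c}=R\ni 1$ (multiplicativity of the bijection is not what is needed there), and before invoking the invertibility criterion you should note that each $\mathfrak{p}_{i}=\mathfrak{P}_{i}\cap R$ is nonzero, e.g.\ because $0\not=cx\in\mathfrak{P}_{i}\cap R$ for any $0\not=x\in\mathfrak{P}_{i}$ and $0\not=c\in\mathfrak{c}$.
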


\begin{proof} Formally the same as the proof of Theorem 3.6 of \cite{Conrad1}.
\end{proof}

The class group of $R$ is by definition
\[  {\sf Cl}_{R} = \{ \text{invertible fractional $R$ ideals}\}/ \{ \text{principal $R$ ideals}\} .\]
The proof of the following is due to W. Sawin.


\begin{lemm}\label{relprimerep} Let $\mathfrak{n}\subset R$ be an ideal.  Then every class $[\mathfrak{a}]\in {\sf Cl}_{R}$ contains a representative prime to $\mathfrak{n}$.
\end{lemm}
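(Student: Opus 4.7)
The plan is, given an invertible fractional $R$-ideal $\mathfrak{a}$, to produce an element $\upbeta\in\mathfrak{a}^{-1}$ such that the integral ideal $\upbeta\mathfrak{a}\subset R$ is prime to $\mathfrak{n}$; since $\upbeta R$ is principal this would give $[\upbeta\mathfrak{a}]=[\mathfrak{a}]$ in ${\sf Cl}_{R}$, settling the claim.

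First I would note that $R/\mathfrak{n}$ is a finite ring by Corollary \ref{FiniteModule}, hence Artinian, so only finitely many prime ideals $\mathfrak{p}_{1},\dots,\mathfrak{p}_{k}$ of $R$ contain $\mathfrak{n}$. The requirement that $\upbeta\mathfrak{a}$ be prime to $\mathfrak{n}$ is then equivalent to $\upbeta\mathfrak{a}\not\subset\mathfrak{p}_{i}$ for every $i$. Since $\mathfrak{a}\mathfrak{a}^{-1}=R\not\subset\mathfrak{p}_{i}$, I can choose, for each $i$, a pair $\upalpha_{i}\in\mathfrak{a}$, $\upbeta_{i}\in\mathfrak{a}^{-1}$ with $\upalpha_{i}\upbeta_{i}\notin\mathfrak{p}_{i}$; these are the ``local'' solutions that must now be patched together.

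The crux is a Chinese Remainder argument. By Corollary \ref{Dim1Noeth} the $\mathfrak{p}_{i}$ are distinct maximal ideals of $R$, hence pairwise comaximal, so standard CRT on $R$ yields elements $e_{1},\dots,e_{k}\in R$ with $e_{i}\equiv\delta_{ij}\pmod{\mathfrak{p}_{j}}$. Setting $\upbeta:=\sum_{i}e_{i}\upbeta_{i}\in\mathfrak{a}^{-1}$, one checks that $\upbeta-\upbeta_{j}\in\mathfrak{p}_{j}\mathfrak{a}^{-1}$ for every $j$, so multiplication by $\upalpha_{j}\in\mathfrak{a}$ gives $\upbeta\upalpha_{j}-\upbeta_{j}\upalpha_{j}\in\mathfrak{p}_{j}\mathfrak{a}^{-1}\mathfrak{a}=\mathfrak{p}_{j}R=\mathfrak{p}_{j}$, forcing $\upbeta\upalpha_{j}\notin\mathfrak{p}_{j}$. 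Thus $\upbeta\mathfrak{a}$ lies in $R$ (since $\upbeta\in\mathfrak{a}^{-1}$) and is not contained in any $\mathfrak{p}_{i}$, which is precisely what was needed.

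I do not anticipate a serious obstacle: the argument invokes only invertibility of $\mathfrak{a}$ together with the comaximality of distinct maximal ideals of $R$, both of which are already established. The potential pitfall is that $R$ need not be Dedekind, so one cannot simply factor $\mathfrak{a}$ and adjust prime-by-prime exponents, nor appeal freely to local uniformizers at non-invertible primes; however, working directly with $\mathfrak{a}^{-1}$ and patching via CRT on $R$ rather than on any localization neatly sidesteps this issue, since no use is made of a prime factorization of $\mathfrak{a}$ itself.
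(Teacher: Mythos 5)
Your argument is correct and is essentially the paper's proof (due to Sawin): both pass to an integral representative $\upbeta\mathfrak{a}$ with $\upbeta\in\mathfrak{a}^{-1}$, use $\mathfrak{a}\mathfrak{a}^{-1}=R\not\subset\mathfrak{p}$ to get a local solution at each prime $\mathfrak{p}\supset\mathfrak{n}$, and patch these via the Chinese remainder theorem, the only difference being that the paper invokes CRT for the module $\mathfrak{a}^{-1}$ (surjectivity of $\mathfrak{a}^{-1}\to\prod_{\mathfrak{p}\mid\mathfrak{n}}\mathfrak{a}^{-1}/\mathfrak{p}\mathfrak{a}^{-1}$) while you implement the same surjectivity explicitly with the elements $e_{i}\in R$.
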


\begin{proof} When $R=A$ is Dedekind, this is an elementary consequence of the Chinese remainder theorem, see \cite{Conrad1}, Lemma 5.1.   Otherwise, let $\mathfrak{a}$ be an invertible fractional $R$-ideal,
 $\upalpha\in \mathfrak{a}^{-1}$, so that $\upalpha\mathfrak{a}\subset R$ is an integral representative of $[\mathfrak{a}]$.  For any prime $\mathfrak{p}\subset R$, then, we have 
 \begin{align*}    \quad\quad\quad (\upalpha\mathfrak{a}  ,\mathfrak{p}) =1 
 \Longleftrightarrow \upalpha\mathfrak{a} \not\subset \mathfrak{p} \Longleftrightarrow \exists \upbeta\in\mathfrak{a}, \; \upalpha\upbeta\not\in\mathfrak{p}. &
  \quad\quad\quad   (\dag_{\mathfrak{p}})  \quad\quad\quad 
  \end{align*}
  In particular, $(\upalpha\mathfrak{a}  ,\mathfrak{n}) =1 $ if and only if $ (\dag_{\mathfrak{p}}) $ holds for all $\mathfrak{p}|\mathfrak{n}$.  The satisfaction of $ (\dag_{\mathfrak{p}}) $ for a particular $\upalpha$ only
  depends on the coset $\upalpha \mod \mathfrak{p}\mathfrak{a}^{-1}$ and, moreover,  the quotient module $\mathfrak{a}^{-1}/ \mathfrak{p}\mathfrak{a}^{-1}$ contains a class defined by $\upalpha_{\mathfrak{p}}$ satisfying
  $ (\dag_{\mathfrak{p}}) $, since $\mathfrak{a}\mathfrak{a}^{-1}\supsetneq \mathfrak{p}$.  By the Chinese remainder theorem for modules, the map 
  \[   \mathfrak{a}^{-1}\longrightarrow \prod_{\mathfrak{p}|\mathfrak{n}} \mathfrak{a}^{-1}/ \mathfrak{p}\mathfrak{a}^{-1} \]
  is surjective, hence there exists $\upalpha\in\mathfrak{a}^{-1}$ satisfying $ (\dag_{\mathfrak{p}}) $ for all $\mathfrak{p}|\mathfrak{n}$.
  
\end{proof}

\begin{coro}\label{surjectionofclassgroups}  The expansion map 
\[ R\supset\mathfrak{a}\longmapsto \mathfrak{a}A \subset A\]
induces an epimorphism
\[ {\sf Cl}_{R}\twoheadrightarrow {\sf Cl}_{A}.\]  
\end{coro}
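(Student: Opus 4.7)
The plan is to verify that expansion $\mathfrak{a}\mapsto\mathfrak{a}A$ descends to a well-defined group homomorphism on class groups, and then to use Lemma \ref{relprimerep} together with the bijection of Theorem \ref{bijectionprimetoc} to produce a preimage for every class in ${\sf Cl}_{A}$.

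First I would check well-definedness. For any non-zero fractional $R$-ideal $\mathfrak{a}$, the $A$-module $\mathfrak{a}A$ is a non-zero fractional $A$-ideal, and is automatically invertible because $A$ is Dedekind. A direct unwinding of the definitions gives $(\mathfrak{a}\mathfrak{b})A=(\mathfrak{a}A)(\mathfrak{b}A)$, so expansion is multiplicative on invertible fractional $R$-ideals; and if $\mathfrak{a}=xR$ is principal, then $\mathfrak{a}A=xA$ is principal. Hence expansion induces a well-defined group homomorphism ${\sf Cl}_{R}\to {\sf Cl}_{A}$.

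Next I would establish surjectivity via the contraction/expansion bijection. Given $[\mathfrak{b}]\in {\sf Cl}_{A}$, apply Lemma \ref{relprimerep} in the Dedekind case $R=A$ (note that the conductor $\mathfrak{c}$ is by definition an $A$-ideal) with $\mathfrak{n}=\mathfrak{c}$ to replace $\mathfrak{b}$ by an integral representative $\mathfrak{b}'\subset A$ prime to $\mathfrak{c}$. Theorem \ref{bijectionprimetoc} then guarantees that $\mathfrak{b}'=A(\mathfrak{b}'\cap R)$, so the contraction $\mathfrak{a}:=\mathfrak{b}'\cap R$ is an $R$-ideal prime to $\mathfrak{c}$ whose expansion recovers $\mathfrak{b}'$. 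By Theorem \ref{inverttheo}, $\mathfrak{a}$ is invertible, so $[\mathfrak{a}]\in {\sf Cl}_{R}$, and this class maps to $[\mathfrak{a}A]=[\mathfrak{b}']=[\mathfrak{b}]$.

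The main obstacle is organizational rather than substantive: every serious ingredient -- the contraction bijection, the invertibility of ideals prime to the conductor, and the availability of representatives coprime to a prescribed ideal -- has already been established upstream, so the only genuine verification remaining is that expansion actually commutes with multiplication of fractional ideals and preserves principality, which is an immediate computation from the definitions.
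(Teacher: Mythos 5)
Your proposal is correct and follows essentially the same route as the paper: pick a representative of the given class in ${\sf Cl}_{A}$ prime to $\mathfrak{c}$ via Lemma \ref{relprimerep}, contract it to $R$ using Theorem \ref{bijectionprimetoc}, and invoke Theorem \ref{inverttheo} for invertibility, so that expansion recovers the chosen representative. The only difference is that you also spell out the routine well-definedness check (multiplicativity and preservation of principality), which the paper leaves implicit.
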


\begin{proof} Let $[\mathfrak{A}]\in {\sf Cl}_{A}$.  By Lemma \ref{relprimerep}, we may assume $\mathfrak{A}$ is prime to $\mathfrak{c}$.  Then by Theorem \ref{bijectionprimetoc}, $\mathfrak{a}=\mathfrak{A}\cap R$
is also prime to the conductor, hence invertible by Theorem \ref{inverttheo}, and so defines an element $[\mathfrak{a}]\in {\sf Cl}_{R}$ mapping onto $[\mathfrak{A}]$.
\end{proof}

\begin{coro}\label{PrimeToConductor} An ideal in $R$ that is  prime to the conductor has unique factorization into prime
ideals which are prime to the conductor.   In particular, a prime ideal prime to the conductor is invertible. All but finitely many prime ideals are prime to the conductor, hence all 
but finitely many prime ideals are invertible.\end{coro}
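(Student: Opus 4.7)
The plan is to derive all three assertions from the existence of factorization into invertible primes given by Theorem \ref{inverttheo}, together with the maximality of all nonzero primes guaranteed by Corollary \ref{Dim1Noeth} and the finiteness of $R/\mathfrak{c}$ supplied by Corollary \ref{FiniteModule}.

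For the factorization statement, start by applying Theorem \ref{inverttheo} to an ideal $\mathfrak{b}\subset R$ prime to $\mathfrak{c}$ to obtain $\mathfrak{b}=\mathfrak{p}_{1}\cdots\mathfrak{p}_{n}$ with each $\mathfrak{p}_{i}$ an invertible prime. Each such $\mathfrak{p}_{i}$ is automatically prime to $\mathfrak{c}$, since $\mathfrak{b}\subset\mathfrak{p}_{i}$ gives $R=\mathfrak{b}+\mathfrak{c}\subset\mathfrak{p}_{i}+\mathfrak{c}$. For uniqueness, suppose $\mathfrak{p}_{1}\cdots\mathfrak{p}_{n}=\mathfrak{q}_{1}\cdots\mathfrak{q}_{m}$ with all factors invertible primes. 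Primality of $\mathfrak{p}_{1}$ forces $\mathfrak{p}_{1}\supset\mathfrak{q}_{j}$ for some $j$; Corollary \ref{Dim1Noeth} upgrades this to equality via maximality, after which multiplication by the inverse $\mathfrak{p}_{1}^{-1}$ and induction conclude the argument.

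The second claim is essentially a special case: if $\mathfrak{p}$ is prime and prime to $\mathfrak{c}$, Theorem \ref{inverttheo} gives $\mathfrak{p}=\mathfrak{p}_{1}\cdots\mathfrak{p}_{n}$ with the $\mathfrak{p}_{i}$ invertible primes, and $\mathfrak{p}\subset\mathfrak{p}_{i}$ together with maximality forces $\mathfrak{p}=\mathfrak{p}_{i}$ for every $i$. In particular $\mathfrak{p}$ inherits invertibility from $\mathfrak{p}_{1}$.

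For the last claim, note that a prime $\mathfrak{p}$ fails to be prime to $\mathfrak{c}$ precisely when $\mathfrak{c}\subset\mathfrak{p}$: if $\mathfrak{p}+\mathfrak{c}\neq R$ then maximality of $\mathfrak{p}$ gives $\mathfrak{p}+\mathfrak{c}=\mathfrak{p}$. The primes containing $\mathfrak{c}$ correspond to primes of the quotient $R/\mathfrak{c}$, which is finite by Corollary \ref{FiniteModule}, so there are only finitely many of them. Invertibility of cofinitely many primes then follows from the preceding paragraph. The only genuinely substantive point, and the one I would be most careful about, is the cancellation step in the uniqueness argument, where it is essential to use full invertibility of the $\mathfrak{p}_{i}$ rather than merely their appearance as factors; everything else is an unwinding of the earlier results.
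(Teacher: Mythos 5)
Your proof is correct and is essentially the argument the paper has in mind: the paper simply cites Corollary 3.11 of Conrad's notes on the conductor ideal, whose proof is exactly this combination of Theorem \ref{inverttheo} for existence, invertibility-plus-maximality for the cancellation in uniqueness, and finiteness of $R/\mathfrak{c}$ for the cofiniteness claim. Your writeup fills in the details the paper leaves to the reference, with no gaps.
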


\begin{proof} Again, the proof here is formally the same as that for orders in number fields, see Corollary 3.11 of \cite{Conrad1}.
\end{proof}

\begin{theo}\label{invertiffprimetoc} A prime ideal $\mathfrak{p}\subset R$ is invertible $\Longleftrightarrow$ $\mathfrak{p}$ is prime to the conductor.
\end{theo}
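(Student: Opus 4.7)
The reverse direction is already established by Theorem \ref{inverttheo}, so the task reduces to showing that an invertible prime $\mathfrak{p}$ must be prime to $\mathfrak{c}$. My plan is to exploit localization at $\mathfrak{p}$, using invertibility to identify $R_{\mathfrak{p}}$ with a discrete valuation ring, and then translate the resulting inclusion $A\subset R_{\mathfrak{p}}$ into the existence of a conductor element outside $\mathfrak{p}$.

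First I would show that $R_{\mathfrak{p}}$ is a DVR. Invertibility of $\mathfrak{p}$ implies $\mathfrak{p}R_{\mathfrak{p}}$ is invertible in the local ring $R_{\mathfrak{p}}$, and in a local ring any invertible ideal is principal (in an expression $1=\sum a_{i}b_{i}$ with $a_{i}\in \mathfrak{p}R_{\mathfrak{p}}$ and $b_{i}\in (\mathfrak{p}R_{\mathfrak{p}})^{-1}$, at least one $a_{i}b_{i}$ is a unit and its $a_{i}$ generates $\mathfrak{p}R_{\mathfrak{p}}$). Combining with Corollary \ref{Dim1Noeth}, $R_{\mathfrak{p}}$ is a $1$-dimensional Noetherian local domain with principal maximal ideal, hence a DVR, and in particular integrally closed in $K$.

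Next, by Corollary \ref{AintegraloverR}, $A$ is integral over $R$, so every element of $A$ is integral over $R_{\mathfrak{p}}$ and therefore lies in $R_{\mathfrak{p}}$. Using the finite generation from Corollary \ref{FiniteModule}, write $A = Ra_{1}+\cdots+Ra_{n}$ and express each $a_{i}=b_{i}/s_{i}$ with $b_{i}\in R$ and $s_{i}\in R\setminus\mathfrak{p}$. Set $c = s_{1}\cdots s_{n}$, which remains in $R\setminus\mathfrak{p}$ because $\mathfrak{p}$ is prime. Then $cA\subset R$; since $cA$ is an $A$-ideal contained in $R$, it lies in $\mathfrak{c}$ by definition of the conductor, so $c=c\cdot 1\in \mathfrak{c}\setminus\mathfrak{p}$, witnessing $\mathfrak{c}\not\subset\mathfrak{p}$, i.e., $\mathfrak{p}$ is prime to $\mathfrak{c}$.

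The only conceptual step worth flagging is the bridge between local and global information: the DVR argument only produces the inclusion $A\subset R_{\mathfrak{p}}$, while what is needed is a single element $c\in R\setminus\mathfrak{p}$ effecting $cA\subset R$ globally. This is precisely where the finite generation of $A$ over $R$ plays its essential role -- without it, one could clear denominators for one element of $A$ at a time, but not simultaneously for a whole ring.
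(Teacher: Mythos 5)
Your argument is correct and is essentially the proof the paper itself invokes: the paper disposes of the forward direction by citing Theorem 6.1 of Conrad's conductor notes, remarking only that one needs $A$ to be a finitely generated $R$-module (Corollary \ref{FiniteModule}), and your localization argument (invertible $\Rightarrow$ $\mathfrak{p}R_{(\mathfrak{p})}$ principal $\Rightarrow$ $R_{(\mathfrak{p})}$ a DVR, hence integrally closed and containing $A$, then clearing denominators to produce $c\in\mathfrak{c}\setminus\mathfrak{p}$) is exactly that argument, with finite generation used precisely where the paper says it is needed. The only microscopic point to make explicit is that $\mathfrak{c}\not\subset\mathfrak{p}$ yields $\mathfrak{p}+\mathfrak{c}=R$ because nonzero primes of $R$ are maximal (Corollary \ref{Dim1Noeth}).
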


\begin{proof} $\Longleftarrow$ By Corollary \ref{PrimeToConductor}.  

\noindent $\Longrightarrow$ The proof is formally the same as that of Theorem 6.1 of  \cite{Conrad1}; in this connection, we remark that the proof requires that $A$ is a finitely generated $R$-module, which
is a consequence of Corollary \ref{FiniteModule}.
\end{proof}



\begin{theo}\label{2gentheo} Every invertible ideal $\mathfrak{a}\subset R$ has the {\rm 2}-generator property: there exists $\upalpha, \upbeta\in R$ with 
\[  \mathfrak{a} = (\upalpha, \upbeta). \]
\end{theo}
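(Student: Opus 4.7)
The plan is to imitate the classical proof valid for orders in number fields, leveraging the finiteness statements (Corollary \ref{FiniteModule}) and the factorization machinery already set up for invertible ideals. The strategy is: pick any nonzero $\alpha\in\mathfrak{a}$, then search for $\beta\in\mathfrak{a}$ such that the integral ideals $\alpha\mathfrak{a}^{-1}$ and $\beta\mathfrak{a}^{-1}$ together generate all of $R$; equivalently, the equality $(\alpha,\beta)\mathfrak{a}^{-1} = R$ is exactly $(\alpha,\beta) = \mathfrak{a}$ after multiplying by $\mathfrak{a}$.

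First I would fix $0\neq\alpha\in\mathfrak{a}$ and set $\mathfrak{b} := \alpha\mathfrak{a}^{-1}\subset R$, which is an integral ideal since $\alpha\in\mathfrak{a}$. By Corollary \ref{FiniteModule}, $R/\mathfrak{b}$ is finite, so $\mathfrak{b}$ is contained in only finitely many maximal ideals $\mathfrak{p}_{1},\dots,\mathfrak{p}_{n}$ of $R$. The aim is then to produce $\beta\in\mathfrak{a}$ with $\beta\mathfrak{a}^{-1}\not\subset\mathfrak{p}_{i}$ for every $i$. Note the convenient equivalence $\beta\mathfrak{a}^{-1}\subset\mathfrak{p}_{i}\iff \beta\in\mathfrak{p}_{i}\mathfrak{a}$, obtained by multiplying both sides by $\mathfrak{a}$ (resp.\ $\mathfrak{a}^{-1}$) and using invertibility, so the condition to be arranged is just $\beta\not\in\mathfrak{p}_{i}\mathfrak{a}$ for each $i$.

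The existence of such a $\beta$ is a Chinese Remainder Theorem step at the module level. Invertibility forces $\mathfrak{p}_{i}\mathfrak{a}\subsetneq\mathfrak{a}$, for otherwise multiplication by $\mathfrak{a}^{-1}$ would give $\mathfrak{p}_{i}=R$; and since the $\mathfrak{p}_{i}$ are distinct maximal ideals they are pairwise comaximal, which forces $\mathfrak{p}_{i}\mathfrak{a}+\mathfrak{p}_{j}\mathfrak{a}=\mathfrak{a}$ for $i\neq j$. Thus the natural map
\[ \mathfrak{a}\longrightarrow \prod_{i=1}^{n}\mathfrak{a}/\mathfrak{p}_{i}\mathfrak{a} \]
is surjective, and I can select $\beta\in\mathfrak{a}$ whose image in every factor is nonzero.

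Finally I would assemble the conclusion. With this $\beta$, the $R$-ideal $\beta\mathfrak{a}^{-1}+\mathfrak{b}$ contains $\mathfrak{b}$ and lies outside each $\mathfrak{p}_{i}$; since the maximal ideals containing $\beta\mathfrak{a}^{-1}+\mathfrak{b}$ must a fortiori contain $\mathfrak{b}$ and hence belong to $\{\mathfrak{p}_{1},\dots,\mathfrak{p}_{n}\}$, no maximal ideal contains it, so $\beta\mathfrak{a}^{-1}+\mathfrak{b}=R$. Multiplying through by $\mathfrak{a}$ gives $\alpha R+\beta R=\mathfrak{a}$, as desired. I do not expect a serious obstruction: the only subtlety is justifying the surjectivity of the CRT map in the module setting and the finiteness of the relevant prime set, and both are immediate from the comaximality of the $\mathfrak{p}_{i}$ and the fact that $R/\mathfrak{b}$ is finite.
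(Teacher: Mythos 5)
Your proof is correct, but it takes a genuinely different route from the paper. The paper's own proof only establishes that any ideal of $R$ lies in finitely many primes (via Proposition \ref{contractionprop}: every prime of $R$ is the contraction of a prime of $A$, and infinitely many distinct primes of $A$ intersect in $(0)$), and then invokes Corollary 1 of \cite{GilmerHeinzer}, which states that an invertible ideal of a commutative ring contained in finitely many maximal ideals is $2$-generated. You instead reprove the cited result in the case at hand: fixing $0\neq\upalpha\in\mathfrak{a}$, using finiteness of $R/\upalpha\mathfrak{a}^{-1}$ (Corollary \ref{FiniteModule}) to bound the set of relevant maximal ideals, and running the classical avoidance/CRT argument to produce $\upbeta$ with $(\upalpha,\upbeta)\mathfrak{a}^{-1}=R$. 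What your route buys is self-containedness, and in fact it yields the stronger ``$1\tfrac{1}{2}$-generator'' statement of Note \ref{mingennote} (the first generator $\upalpha$ may be prescribed arbitrarily), which the paper again attributes to Theorem 3 of \cite{GilmerHeinzer}; what the paper's citation buys is brevity and the full generality of the Gilmer--Heinzer theorem. One point of precision in your write-up: surjectivity of $\mathfrak{a}\rightarrow\prod_{i}\mathfrak{a}/\mathfrak{p}_{i}\mathfrak{a}$ does not follow merely from pairwise comaximality of the submodules $\mathfrak{p}_{i}\mathfrak{a}$ (three distinct lines in a plane are pairwise comaximal submodules for which the analogous map is not surjective); it follows because the $\mathfrak{p}_{i}$ are pairwise comaximal \emph{ideals}, so that $\mathfrak{p}_{i}+\prod_{j\neq i}\mathfrak{p}_{j}=R$ gives $\mathfrak{a}=\mathfrak{p}_{i}\mathfrak{a}+\bigl(\prod_{j\neq i}\mathfrak{p}_{j}\bigr)\mathfrak{a}$, which is exactly the module form of the Chinese remainder theorem you need (and the same one the paper uses in Lemma \ref{relprimerep}). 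With that justification made explicit, the remaining steps --- the equivalence $\upbeta\mathfrak{a}^{-1}\subset\mathfrak{p}_{i}\Leftrightarrow\upbeta\in\mathfrak{p}_{i}\mathfrak{a}$, the strict inclusion $\mathfrak{p}_{i}\mathfrak{a}\subsetneq\mathfrak{a}$, and the passage from $(\upalpha,\upbeta)\mathfrak{a}^{-1}=R$ to $\mathfrak{a}=(\upalpha,\upbeta)$ --- are all sound.
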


\begin{proof}  Every ideal $\mathfrak{a}\subset R$, invertible or not, is contained in finitely many prime ideals.  Indeed, by Proposition \ref{contractionprop}, every prime
ideal of $R$ is of the form $\mathfrak{p}_{R}= \mathfrak{p}\cap R$ for $\mathfrak{p}\subset A$ prime.  Thus if $\mathfrak{a}$ is contained in infinitely many distinct primes $\mathfrak{p}_{i,R}$
we would have
\[ \mathfrak{a} \subset \bigcap_{i=1}^{\infty} \mathfrak{p}_{i,R}\subset  \bigcap_{i=1}^{\infty} \mathfrak{p}_{i}  = (0).\]
By Corollary 1 of \cite{GilmerHeinzer}, any invertible ideal in a commutative ring contained in a finite number of maximal ideals has the 2-generator property.
\end{proof}

\begin{note}\label{mingennote} By Theorem 3 of \cite{GilmerHeinzer}, given any $0\not=a\in\mathfrak{a}\subset R$ an invertible ideal, there exists $b\in \mathfrak{a}$ with $\mathfrak{a}=(a,b)$.
\end{note}

 
Let  ${\sf I}_{R}$ be the monoid of fractional $R$-ideals, ${\sf I}^{\ast}_{R}$ the subgroup of invertible ideals, ${\sf P}_{R}$ the subgroup of principal ideals, so that
$ {\sf Cl}_{R}  ={\sf I}^{\ast}_{R}/{\sf P}_{R} .$
Denote by 
\[ {\sf I}_{R}^{\mathfrak{c}}<{\sf I}^{\ast}_{R}\] the subgroup of $R$-ideals prime to the conductor and ${\sf P}^{\mathfrak{c}}_{R}<{\sf P}_{R}$ the subgroup of
principal $R$-ideals prime to the conductor.  Similarly, let ${\sf I}_{A}^{\mathfrak{c}}$ be the group of  $A$-ideals  prime to the conductor and let \[ {\sf P}^{\mathfrak{c}}_{A} = \langle rA \; | \;\; r\in R,\; (r,\mathfrak{c}) =1\rangle \]
be the group of principal
$A$-ideals generated by an element of $R$ which is prime to the conductor.

\begin{theo}\label{CGIsos} We have the following isomorphisms
\[   {\sf Cl}_{R}\cong  {\sf I}_{R}^{\mathfrak{c}}/{\sf P}_{R}^{\mathfrak{c}}  \cong  {\sf I}_{A}^{\mathfrak{c}}/{\sf P}_{A}^{\mathfrak{c}} . \]
\end{theo}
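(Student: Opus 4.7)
The plan is to handle the two isomorphisms separately, the first via the First Isomorphism Theorem applied to a restriction of the natural class group projection, and the second via the expansion map supplied by Theorem \ref{bijectionprimetoc}.

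For ${\sf Cl}_{R}\cong {\sf I}_{R}^{\mathfrak{c}}/{\sf P}_{R}^{\mathfrak{c}}$, I would restrict the canonical projection ${\sf I}^{\ast}_{R}\twoheadrightarrow {\sf Cl}_{R}$ to the subgroup ${\sf I}_{R}^{\mathfrak{c}}$ (which is contained in ${\sf I}^{\ast}_{R}$ by Corollary \ref{PrimeToConductor}). Lemma \ref{relprimerep}, applied with $\mathfrak{n}=\mathfrak{c}$, gives surjectivity of the restricted map. Its kernel is ${\sf I}_{R}^{\mathfrak{c}}\cap {\sf P}_{R}={\sf P}_{R}^{\mathfrak{c}}$ by definition, and the First Isomorphism Theorem delivers the first isomorphism.

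For ${\sf I}_{R}^{\mathfrak{c}}/{\sf P}_{R}^{\mathfrak{c}}\cong {\sf I}_{A}^{\mathfrak{c}}/{\sf P}_{A}^{\mathfrak{c}}$, I would invoke Theorem \ref{bijectionprimetoc}: the expansion $\mathfrak{a}\mapsto \mathfrak{a}A$ is a multiplicative bijection between \emph{integral} $R$-ideals prime to $\mathfrak{c}$ and \emph{integral} $A$-ideals prime to $\mathfrak{c}$. Extending multiplicatively to the groups of invertible fractional ideals prime to the conductor (by writing each such ideal as a quotient of two coprime integral ideals prime to $\mathfrak{c}$) yields a group isomorphism $\Phi\colon {\sf I}_{R}^{\mathfrak{c}}\xrightarrow{\sim} {\sf I}_{A}^{\mathfrak{c}}$. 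It then suffices to check that $\Phi({\sf P}_{R}^{\mathfrak{c}})={\sf P}_{A}^{\mathfrak{c}}$. For the forward inclusion, a principal $R$-ideal $rR\in {\sf P}_{R}^{\mathfrak{c}}$ can be presented as $(sR)(tR)^{-1}$ with $s,t\in R$ and both $sR,tR$ prime to $\mathfrak{c}$; it expands to $(sA)(tA)^{-1}\in {\sf P}_{A}^{\mathfrak{c}}$. For the reverse, each generator $rA$ of ${\sf P}_{A}^{\mathfrak{c}}$ with $r\in R$ and $(r,\mathfrak{c})=1$ is $\Phi(rR)$, and $rR\in {\sf P}_{R}^{\mathfrak{c}}$ because $rR$ corresponds to $rA$ under the contraction/expansion bijection of Theorem \ref{bijectionprimetoc}.

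The main subtlety I anticipate is bookkeeping at the interface between integral and fractional ideals: Theorem \ref{bijectionprimetoc} is phrased for integral ideals, so one must make the passage to fractional ideals precise and verify that the notion of ``prime to $\mathfrak{c}$'' transports correctly and is independent of the choice of presentation as a quotient of integrals. Once the integral bijection of Theorem \ref{bijectionprimetoc} is in hand, however, this extension is essentially formal, and the proof reduces to the presentational check on principal ideals indicated above.
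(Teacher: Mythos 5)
Your argument is sound, but note that the paper does not actually prove Theorem \ref{CGIsos}: its proof is the single line ``See \cite{Hayes0}'', deferring entirely to Hayes' orders paper. Your proposal therefore supplies what the paper leaves implicit, and it does so using exactly the toolkit the paper has already assembled — Lemma \ref{relprimerep} (with $\mathfrak{n}=\mathfrak{c}$) for surjectivity of ${\sf I}_{R}^{\mathfrak{c}}\to{\sf Cl}_{R}$, Theorem \ref{inverttheo}/Corollary \ref{PrimeToConductor} to know ${\sf I}_{R}^{\mathfrak{c}}\le{\sf I}_{R}^{\ast}$, and Theorem \ref{bijectionprimetoc} for the expansion isomorphism — which is the same pattern as the number-field argument in \cite{Conrad1} that the paper invokes repeatedly, so your route is entirely in the paper's spirit even though the paper itself outsources the proof. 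One step deserves more than the ``essentially formal'' label you give it: the kernel identification ${\sf I}_{R}^{\mathfrak{c}}\cap{\sf P}_{R}={\sf P}_{R}^{\mathfrak{c}}$ and your presentation of an element of ${\sf P}_{R}^{\mathfrak{c}}$ as $(sR)(tR)^{-1}$ with $s,t\in R$ prime to $\mathfrak{c}$ are really the same small lemma, and it is not purely definitional. It is, however, quick: given $\upalpha R=\mathfrak{a}\mathfrak{b}^{-1}$ with $\mathfrak{a},\mathfrak{b}\subset R$ integral and prime to $\mathfrak{c}$, choose $t\in\mathfrak{b}$ with $t\equiv 1\bmod\mathfrak{c}$ (possible since $\mathfrak{b}+\mathfrak{c}=R$); then $\mathfrak{d}:=t\mathfrak{b}^{-1}\supset tR$ is integral and prime to $\mathfrak{c}$, $tR=\mathfrak{b}\mathfrak{d}$, and $s:=\upalpha t$ lies in $R$ with $sR=\mathfrak{a}\mathfrak{d}$ prime to $\mathfrak{c}$, so $\upalpha R=(sR)(tR)^{-1}$ as you need. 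Finally, you correctly match the image of ${\sf P}_{R}^{\mathfrak{c}}$ with ${\sf P}_{A}^{\mathfrak{c}}$ as the paper defines it (principal $A$-ideals with a generator in $R$ prime to $\mathfrak{c}$), rather than with all principal $A$-ideals prime to $\mathfrak{c}$; that is the one place where a careless argument would collapse ${\sf I}_{A}^{\mathfrak{c}}/{\sf P}_{A}^{\mathfrak{c}}$ to ${\sf Cl}_{A}$, and your handling of it is right.
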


\begin{proof} See \cite{Hayes0}.
\end{proof}

We close this section with a discussion of localizations and completions.  If $\mathfrak{p}_{R}\subset R$ is a prime ideal, by Proposition \ref{contractionprop}, it is the contraction of a 
prime $\mathfrak{p}\subset A$.  Denote the localization with respect to $\mathfrak{p}_{R}$ by 
\[ R_{( \mathfrak{p}_{R} )} = RS^{-1}_{\mathfrak{p}_{R}}, \quad S_{\mathfrak{p}_{R}}= R\setminus \mathfrak{p}_{R} .\]
Since $ S_{\mathfrak{p}_{R}}\subset  S_{\mathfrak{p}} = A\setminus \mathfrak{p}$, there is a canonical inclusion
\[  R_{( \mathfrak{p}_{R} )} \hookrightarrow A_{( \mathfrak{p} )}.  \]

\begin{prop}\label{WhenLocalEqual} If $\mathfrak{p}\nmid \mathfrak{c}$ then $R_{( \mathfrak{p}_{R} )} = A_{( \mathfrak{p} )}$.
\end{prop}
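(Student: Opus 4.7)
The plan is to prove the nontrivial inclusion $A_{(\mathfrak{p})} \subset R_{(\mathfrak{p}_R)}$ by exploiting the coprimality hypothesis $\mathfrak{p} + \mathfrak{c} = A$ to clear denominators and numerators into $R$. The reverse inclusion is already supplied in the excerpt.

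First, I would fix any $a/s \in A_{(\mathfrak{p})}$ with $a \in A$ and $s \in A \setminus \mathfrak{p}$, and then use the hypothesis $(\mathfrak{p},\mathfrak{c}) = 1$ to select elements $p \in \mathfrak{p}$ and $c \in \mathfrak{c}$ with $p + c = 1$. The three key properties of $c$ to record are: (i) $c \in R$, since $\mathfrak{c} \subset R$; (ii) $c \notin \mathfrak{p}$, since otherwise $1 = p + c \in \mathfrak{p}$; and (iii) $c A \subset R$, which is precisely the defining property of the conductor.

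Next I would rewrite $a/s = (ca)/(cs)$ and verify that this quotient lies in $R_{(\mathfrak{p}_R)}$. The numerator $ca$ sits in $R$ by (iii), and the denominator $cs$ likewise sits in $R$. Moreover, $cs \notin \mathfrak{p}$ because $c \notin \mathfrak{p}$ by (ii), $s \notin \mathfrak{p}$ by hypothesis, and $\mathfrak{p}$ is prime; hence $cs \in R \setminus (\mathfrak{p} \cap R) = S_{\mathfrak{p}_R}$. This shows $a/s \in R_{(\mathfrak{p}_R)}$ and yields the equality of localizations.

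There is no real obstacle here: the whole proof is an application of the Bezout-style identity $p + c = 1$ combined with the two roles played by $\mathfrak{c}$, namely living inside $R$ while absorbing $A$. The only point to watch is that the trick uses $\mathfrak{c}$ in a single element, so the argument would fail without the coprimality assumption $\mathfrak{p} \nmid \mathfrak{c}$, which is exactly where hypothesis enters.
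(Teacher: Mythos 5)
Your proof is correct and is essentially the paper's argument: both hinge on choosing a conductor element $c\in\mathfrak{c}\setminus\mathfrak{p}$ and rewriting $a/s=ca/(cs)$ with numerator and denominator now in $R$ and $cs\notin\mathfrak{p}$. The only cosmetic difference is that you produce $c$ via the identity $p+c=1$, whereas the paper just picks $c\in\mathfrak{c}\setminus\mathfrak{p}$ directly from $\mathfrak{p}\nmid\mathfrak{c}$.
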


\begin{proof} By hypothesis, there exists $c\in\mathfrak{c}\setminus\mathfrak{p}\subset  S_{\mathfrak{p}}$.  Then for all $\upalpha/s\in   A_{( \mathfrak{p} )}$, $\upalpha/s=c\upalpha/cs\in R_{( \mathfrak{p}_{R} )}$.
\end{proof}

The completion of $R_{( \mathfrak{p}_{R} )}$ with respect to its maximal ideal $\mathfrak{p}_{R} R_{( \mathfrak{p}_{R} )}$ is denoted $R_{\mathfrak{p}_{R}}$.

There is another way to complete $R_{( \mathfrak{p}_{R} )} $: namely,  with respect to its embedding into the completion $A_{\mathfrak{p}}\subset K_{\mathfrak{p}}$  of $A$.  Denote
this completion $R_{\mathfrak{p}}$.

\begin{prop}\label{CompletionProp} The completion $R_{\mathfrak{p}_{R}}$ is canonically isomorphic, as a topological ring, to $R_{\mathfrak{p}}$.
\end{prop}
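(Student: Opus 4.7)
Both $R_{\mathfrak{p}_R}$ and $R_{\mathfrak{p}}$ are completions of the local ring $R_{(\mathfrak{p}_R)}$, so the plan is to show that the two topologies defining them are cofinal. One has basis of zero-neighborhoods $\{\mathfrak{m}^n\}$, where $\mathfrak{m}=\mathfrak{p}_R R_{(\mathfrak{p}_R)}$; the other has basis $\{U_n\}=\{\mathfrak{p}^n A_{(\mathfrak{p})}\cap R_{(\mathfrak{p}_R)}\}$, inherited via $R_{(\mathfrak{p}_R)}\hookrightarrow A_{(\mathfrak{p})}\subset K_{\mathfrak{p}}$. Cofinality together with the universal property of completion then delivers the canonical topological ring isomorphism.

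The inclusion $\mathfrak{m}^n\subseteq U_n$ is routine: for $r/s\in\mathfrak{m}^n$ with $r\in\mathfrak{p}_R^n\subseteq\mathfrak{p}^n$ and $s\in R\setminus\mathfrak{p}_R=R\setminus(\mathfrak{p}\cap R)$, we have $v_{\mathfrak{p}}(r)\geq n$ and $v_{\mathfrak{p}}(s)=0$, so $r/s\in\mathfrak{p}^n A_{(\mathfrak{p})}$. This already produces a continuous ring homomorphism $\hat\phi: R_{\mathfrak{p}_R}\to R_{\mathfrak{p}}$ extending the identity of $R_{(\mathfrak{p}_R)}$, and density of $R_{(\mathfrak{p}_R)}$ in $R_{\mathfrak{p}}$ gives surjectivity.

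The reverse cofinality---that for every $n$, some $U_N\subseteq\mathfrak{m}^n$---is the substantive step. The natural input is the Artin--Rees Lemma applied to the inclusion of $R_{(\mathfrak{p}_R)}$-modules $R_{(\mathfrak{p}_R)}\subseteq A_{(\mathfrak{p}_R)}$ with respect to $\mathfrak{m}$: by Corollary \ref{FiniteModule} combined with Corollary \ref{Dim1Noeth}, $A_{(\mathfrak{p}_R)}$ is a finitely generated module over the Noetherian local ring $R_{(\mathfrak{p}_R)}$, so Artin--Rees furnishes $k_0\in\N$ with
\[ \mathfrak{m}^n A_{(\mathfrak{p}_R)}\cap R_{(\mathfrak{p}_R)}\ \subseteq\ \mathfrak{m}^{n-k_0}\qquad (n\geq k_0). \]
Writing the Dedekind factorization $\mathfrak{p}_R A=\mathfrak{p}^e\,\mathfrak{a}$ with $\mathfrak{a}$ coprime to $\mathfrak{p}$, further localization at $\mathfrak{p}$ collapses $\mathfrak{a}$ to the unit ideal and gives $\mathfrak{m}^n A_{(\mathfrak{p})}=\mathfrak{p}^{ne}A_{(\mathfrak{p})}$, so that $U_{ne}=\mathfrak{m}^n A_{(\mathfrak{p})}\cap R_{(\mathfrak{p}_R)}$; combined with the Artin--Rees bound this yields the required containment.

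The main obstacle will be the final reduction from the intersection in $A_{(\mathfrak{p})}$ back to the one in $A_{(\mathfrak{p}_R)}$: it is transparent when $\mathfrak{p}$ is the unique prime of $A$ above $\mathfrak{p}_R$---for example whenever $\mathfrak{p}\nmid\mathfrak{c}$, in which case Proposition \ref{WhenLocalEqual} already gives $R_{(\mathfrak{p}_R)}=A_{(\mathfrak{p})}$ and the proposition is immediate---and at primes dividing the conductor requires additional care to verify that the $\mathfrak{m}$-adic filtration on $R_{(\mathfrak{p}_R)}$ captures the $\mathfrak{p}$-adic one. Once the two filtrations are shown cofinal, $\hat\phi$ is a homeomorphism, yielding the canonical topological isomorphism $R_{\mathfrak{p}_R}\cong R_{\mathfrak{p}}$.
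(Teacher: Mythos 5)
The portions you actually prove are fine, but the step you postpone (reverse cofinality at primes dividing the conductor) is not bookkeeping to be supplied later: it is the entire content of the proposition, and your Artin--Rees route cannot deliver it in general. Artin--Rees controls $\mathfrak{m}^{n}A_{(\mathfrak{p}_{R})}\cap R_{(\mathfrak{p}_{R})}$, where $A_{(\mathfrak{p}_{R})}=A\,S_{\mathfrak{p}_{R}}^{-1}$ is the \emph{semilocal} ring of $A$ at all primes over $\mathfrak{p}_{R}$; what you need is a bound on $U_{N}=\mathfrak{p}^{N}A_{(\mathfrak{p})}\cap R_{(\mathfrak{p}_{R})}$, and $A_{(\mathfrak{p})}$ is a finitely generated (indeed, integral) $R_{(\mathfrak{p}_{R})}$-module only when $\mathfrak{p}$ is the unique prime of $A$ over $\mathfrak{p}_{R}$. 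If a second prime $\mathfrak{q}\subset A$ also contracts to $\mathfrak{p}_{R}$ (possible only when $\mathfrak{p}\mid\mathfrak{c}$), then an element of $R_{(\mathfrak{p}_{R})}$ may have arbitrarily large $\mathfrak{p}$-adic valuation while being a $\mathfrak{q}$-adic unit up to order one, and such an element lies in every $U_{N}$ but in no $\mathfrak{m}^{n}$ with $n\geq 2$; the filtrations are simply not cofinal. Concretely, take $A=\F_{q}[T]$ and $R=\F_{q}[T(T-1),T^{2}(T-1)]=\{f\in A:\,f(0)=f(1)\}$, so that $\mathfrak{p}_{R}=T(T-1)A$ is the common contraction of $(T)$ and $(T-1)$: the elements $T^{N}(T-1)$ witness the failure of cofinality, and in fact $R_{\mathfrak{p}_{R}}=\varprojlim R/\mathfrak{p}_{R}^{n}\cong\F_{q}[[X,Y]]/(XY)$ has zero divisors, whereas the closure $R_{\mathfrak{p}}$ of $R_{(\mathfrak{p}_{R})}$ in $K_{(T)}$ is $\F_{q}[[T]]$. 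So the reduction you defer genuinely fails at such split singular primes; no ``additional care'' closes it without an extra hypothesis.

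For comparison, the paper's own proof is the conductor analogue of your argument: it asserts that the conductor $\mathfrak{c}_{(\mathfrak{p})}$ of $R_{(\mathfrak{p}_{R})}$ in the discrete valuation ring $A_{(\mathfrak{p})}$ is a nonzero power of $\mathfrak{p}A_{(\mathfrak{p})}$ contained in $\mathfrak{p}_{R}R_{(\mathfrak{p}_{R})}$, which gives reverse cofinality in one line. That conductor is nonzero precisely when $A_{(\mathfrak{p})}$ is module-finite over $R_{(\mathfrak{p}_{R})}$, i.e.\ precisely in the single-prime situation above (in the nodal example it is the zero ideal, since $T^{k}/(T-1)\in T^{k}A_{(\mathfrak{p})}$ never lies in $R_{(\mathfrak{p}_{R})}$). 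Thus your Artin--Rees variant and the paper's conductor argument stand or fall together: under the hypothesis that $\mathfrak{p}$ is the only prime of $A$ above $\mathfrak{p}_{R}$ --- in particular whenever $\mathfrak{p}\nmid\mathfrak{c}$, where Proposition \ref{WhenLocalEqual} already makes the statement trivial --- your outline can be completed either by your bound applied to $A_{(\mathfrak{p}_{R})}=A_{(\mathfrak{p})}$ or by the conductor as in the paper. What your proposal lacks is the recognition that the postponed step requires this uniqueness hypothesis rather than merely more careful comparison of filtrations.
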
 

\begin{proof}  The canonical inclusion of localizations $R_{( \mathfrak{p}_{R} )}\hookrightarrow A_{(\mathfrak{p})}$ induces a continuous inclusion of completions 
$R_{\mathfrak{p}_{R} }\hookrightarrow  R_{\mathfrak{p}}\subset  A_{\mathfrak{p}}$, since $ \mathfrak{p}_{R} R_{( \mathfrak{p}_{R} )}\subset \mathfrak{p}A_{(\mathfrak{p})}$, the maximal ideal of $A_{(\mathfrak{p})}$.  To see that
this map is bicontinuous, we must show that the two completion topologies are compatible: that is,  there is a power of $\mathfrak{p}A_{(\mathfrak{p})}$ contained in $ \mathfrak{p}_{R} R_{( \mathfrak{p}_{R} )} $.  But  $R_{( \mathfrak{p}_{R} )}$ is an order in 
the Dedekind domain $A_{(\mathfrak{p})}$; denote the corresponding conductor
by $\mathfrak{c}_{(\mathfrak{p})} $. 
Since
$ A_{(\mathfrak{p})}$ is Dedekind,  $\mathfrak{c}_{(\mathfrak{p})} \subset \mathfrak{p}_{R} R_{(\mathfrak{p}_{R})}$ is a power of $\mathfrak{p}A_{(\mathfrak{p})}$.   That is, convergence to zero with respect to 
either topology is the same, and the completions with respect to either topology produce canonically isomorphic rings.
\end{proof}

In view of  Proposition \ref{CompletionProp}, we may write unambiguously $R_{\mathfrak{p}}$ for the completion of $R_{(\mathfrak{p}_{R})}$ with respect to either topology.

\begin{coro}\label{CompletionIntersectCoro}  $K\cap R_{\mathfrak{p}} = R_{( \mathfrak{p}_{R} )}$.
\end{coro}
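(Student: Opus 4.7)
The inclusion $R_{(\mathfrak{p}_{R})}\subseteq K\cap R_{\mathfrak{p}}$ is immediate, since $R_{(\mathfrak{p}_{R})}$ sits inside $K$ and embeds densely into its completion $R_{\mathfrak{p}}$. The content is the reverse inclusion, and the plan is to exploit two ingredients that are already in hand: the standard DVR fact for the Dedekind overring $A$, and the conductor description of the topology given in Proposition \ref{CompletionProp}.

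First I would cut the problem by passing to $A$. Since $A_{(\mathfrak{p})}$ is a DVR and $A_{\mathfrak{p}}$ is its $\mathfrak{p}$-adic completion, the classical identity $K\cap A_{\mathfrak{p}} = A_{(\mathfrak{p})}$ applies: an element of $K$ lies in $A_{\mathfrak{p}}$ iff its $\mathfrak{p}$-adic valuation is non-negative. Combined with the inclusion $R_{\mathfrak{p}}\hookrightarrow A_{\mathfrak{p}}$ established in Proposition \ref{CompletionProp}, this reduces the problem to showing that any $x\in A_{(\mathfrak{p})}\cap R_{\mathfrak{p}}$ already lies in $R_{(\mathfrak{p}_{R})}$.

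Next I would bring in the conductor of the order $R_{(\mathfrak{p}_{R})}\subset A_{(\mathfrak{p})}$, which in the proof of Proposition \ref{CompletionProp} is denoted $\mathfrak{c}_{(\mathfrak{p})}$, and which is a power $\mathfrak{p}^{N}A_{(\mathfrak{p})}\subset R_{(\mathfrak{p}_{R})}$ because $A_{(\mathfrak{p})}$ is Dedekind. Given $x\in K\cap R_{\mathfrak{p}}$, pick a sequence $x_{n}\in R_{(\mathfrak{p}_{R})}$ converging to $x$ in the completion topology. By the bicontinuity established in Proposition \ref{CompletionProp}, this is the same as convergence in the $\mathfrak{p}A_{(\mathfrak{p})}$-adic topology, so the valuation $v_{\mathfrak{p}}(x-x_{n})\to\infty$. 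For $n$ large enough we have $v_{\mathfrak{p}}(x-x_{n})\geq N$, so $x-x_{n}\in\mathfrak{p}^{N}A_{(\mathfrak{p})}\subseteq \mathfrak{c}_{(\mathfrak{p})}\subseteq R_{(\mathfrak{p}_{R})}$, and therefore $x = x_{n}+(x-x_{n}) \in R_{(\mathfrak{p}_{R})}$.

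There is no real obstacle here beyond keeping track of the two topologies; the fact that makes everything work was already smuggled in at the level of Proposition \ref{CompletionProp}, namely that the conductor absorbs a high power of the maximal ideal of $A_{(\mathfrak{p})}$. One caveat to verify is that the conductor of the local order is genuinely non-zero, which follows from the global non-vanishing of $\mathfrak{c}$ by localizing (or, alternatively, by observing that if $\mathfrak{p}\nmid\mathfrak{c}$ then Proposition \ref{WhenLocalEqual} makes the whole argument collapse into the trivial DVR case).
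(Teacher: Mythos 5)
Your proposal is correct and follows essentially the same route as the paper: both prove the nontrivial inclusion by taking a sequence in $R_{(\mathfrak{p}_{R})}$ converging to the given element of $K\cap R_{\mathfrak{p}}$ and showing that the eventual difference already lies in $R_{(\mathfrak{p}_{R})}$. Your passage through the DVR identity $K\cap A_{\mathfrak{p}}=A_{(\mathfrak{p})}$ and the local conductor $\mathfrak{c}_{(\mathfrak{p})}=\mathfrak{p}^{N}A_{(\mathfrak{p})}\subseteq R_{(\mathfrak{p}_{R})}$ simply spells out the step the paper compresses into ``by definition of the completion topology, $\upalpha-x_{i}/s_{i}\in\mathfrak{p}_{R}^{n}R_{(\mathfrak{p}_{R})}$'', so if anything yours is the more carefully justified version of the same argument.
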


\begin{proof} The inclusion $R_{( \mathfrak{p}_{R} )}\subset K\cap R_{\mathfrak{p}} $ is clear. Let $\upalpha\in K\cap R_{\mathfrak{p}}$.  Then there exists a sequence $x_{i}/s_{i}\in R_{( \mathfrak{p}_{R} )}$ converging to $\upalpha$.  By definition
of the completion topology, for any $n$ and $i$ large, $\upalpha - x_{i}/s_{i} \in \mathfrak{p}_{R}^{n}R_{( \mathfrak{p}_{R} )}$.  But this implies {\it a fortiori} that
eventually $\upalpha - x_{i}/s_{i} \in R_{( \mathfrak{p}_{R} )}$, which gives  $\upalpha\in R_{( \mathfrak{p}_{R} )}$.  
\end{proof}

\begin{prop} The completion of $R\subset R_{(\mathfrak{p})}$ (with respect to either completion topology) is $R_{\mathfrak{p}}$. 
\end{prop}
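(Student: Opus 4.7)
The plan is to reduce to Proposition \ref{CompletionProp}, which identifies the two completions of $R_{(\mathfrak{p}_{R})}$ with $R_{\mathfrak{p}}$. It therefore suffices to show that $R$ sits densely in $R_{(\mathfrak{p}_{R})}$ for the $\mathfrak{p}_{R}$-adic topology; density for the other topology is then automatic, since by Proposition \ref{CompletionProp} the two topologies on $R_{(\mathfrak{p}_{R})}$ coincide.

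The essential point is that $\mathfrak{p}_{R}$ is a \emph{maximal} ideal of $R$, by Corollary \ref{Dim1Noeth}. Using this, I would establish the standard isomorphism
\[ R/\mathfrak{p}_{R}^{n} \;\xrightarrow{\ \sim\ }\; R_{(\mathfrak{p}_{R})}/\mathfrak{p}_{R}^{n}R_{(\mathfrak{p}_{R})} \]
for every $n\geq 1$. For surjectivity, any $s\in R\setminus \mathfrak{p}_{R}$ is invertible modulo $\mathfrak{p}_{R}^{n}$, since maximality gives $(s)+\mathfrak{p}_{R}=R$, and raising $1=sa+p$ to the $n$-th power yields $(s)+\mathfrak{p}_{R}^{n}=R$. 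The very same relation delivers injectivity: if $sx\in\mathfrak{p}_{R}^{n}$ with $s\notin\mathfrak{p}_{R}$, writing $1=sb+p^{n}$ with $p^{n}\in\mathfrak{p}_{R}^{n}$ and multiplying by $x$ forces $x\in\mathfrak{p}_{R}^{n}$. Passing to the inverse limit over $n$ identifies the $\mathfrak{p}_{R}$-adic completion of $R$ with the completion of $R_{(\mathfrak{p}_{R})}$ at its maximal ideal, which is $R_{\mathfrak{p}_{R}}$.

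The conclusion then follows in one stroke from Proposition \ref{CompletionProp}: both completion topologies on $R_{(\mathfrak{p}_{R})}$ produce $R_{\mathfrak{p}}$ and they restrict, on $R$, to the two completion topologies named in the statement. Since $R$ is dense in $R_{(\mathfrak{p}_{R})}$ for the $\mathfrak{p}_{R}$-adic topology and hence for the subspace-from-$A_{\mathfrak{p}}$ topology as well, the completion of $R$ in either topology agrees with the completion of $R_{(\mathfrak{p}_{R})}$, namely $R_{\mathfrak{p}}$.

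I expect the only non-formal step to be the isomorphism $R/\mathfrak{p}_{R}^{n}\cong R_{(\mathfrak{p}_{R})}/\mathfrak{p}_{R}^{n}R_{(\mathfrak{p}_{R})}$; everything thereafter is inverse-limit bookkeeping combined with Proposition \ref{CompletionProp}. Because $R$ is an order, not necessarily Dedekind, one must take care that the argument above relies only on $\mathfrak{p}_{R}$ being maximal (which follows from Corollary \ref{Dim1Noeth}) and not on factorization properties of $\mathfrak{p}_{R}^{n}$.
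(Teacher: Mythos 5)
Your proof is correct and rests on the same key fact as the paper's: since $\mathfrak{p}_{R}$ is maximal, any $s\notin\mathfrak{p}_{R}$ satisfies $(s)+\mathfrak{p}_{R}^{n}=R$, which is exactly how the paper produces elements $y_{n}\in R$ with $y_{n}s\to 1$ and hence density of $R$ in $R_{(\mathfrak{p}_{R})}$, after which Proposition \ref{CompletionProp} finishes the job just as in your write-up. Your repackaging through the isomorphisms $R/\mathfrak{p}_{R}^{n}\cong R_{(\mathfrak{p}_{R})}/\mathfrak{p}_{R}^{n}R_{(\mathfrak{p}_{R})}$ and inverse limits is a slightly fuller version of the same argument; the injectivity half (i.e. $R\cap\mathfrak{p}_{R}^{n}R_{(\mathfrak{p}_{R})}=\mathfrak{p}_{R}^{n}$) is a harmless extra that the paper's density-only proof does not need.
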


\begin{proof} It is enough to show that $R\subset R_{(\mathfrak{p})}$ is dense in either topology, and to show this, it is enough to show that given $s\in S_{\mathfrak{p}_{R}}$, there
exists a sequence $( y_{i} )_{i=1}^{\infty}\subset R$ converging to $1/s$, or equivalently, $y_{i}s\rightarrow 1$.  But $(s, \mathfrak{p}_{R}^{i})=1$ for all $i$, so choose $y_{i}$ such that $y_{i}s + z_{i}\uprho_{i} =1$
where $\uprho_{i} \in \mathfrak{p}_{R}^{i}$ and $z_{i}\in R$: this gives the desired sequence.
\end{proof}

\section{Drinfeld and Hayes Modules over Rank 1 Orders}\label{s:C}


The theory of Drinfeld modules over an order was introduced in \cite{Hayes0}, however, this paper was written before Hayes had arrived at the final
form of his theory (c.f.\ \cite{Hayes}).  In particular, in  \cite{Hayes0}, Hayes did not introduce the notion of sign
normalization and only gave an explicit generation result for the Hilbert class field of an order.  Regarding ray class fields, Hayes elected to restrict to the case when the order is Dedekind.
The purpose of this section then is to introduce the reader to Hayes' foundational work, updating it according to the requirements of the present paper.



 We continue to use notation established in section \S \ref{s:A}.
We will use lower case gothic letters such as $\mathfrak{a}$ to denote ideals in $A$; ideals in $R$ will be denoted by lower case gothic letters with the subindex $R$ e.g.\ $\mathfrak{a}_{R}$.  
If $\mathfrak{a}_{R}\subset R$ happens to be a contraction of an ideal in $A$, the latter ideal will be denoted $\mathfrak{a}$. 
  
 

The completions $K_{\infty}\subset \C_{\infty}$ have their usual meanings.  The Frobenius automorphism acting on $\C_{\infty}$ is denoted $\uptau (x)=x^{q}$, and for any subfield $L\subset \C_{\infty}$, 
\[ L\{ \uptau \}\] is the noncommutative algebra of additive polynomials in $\uptau$, with product given by composition.
Let \[ \upiota: A\rightarrow L\]be a ring homomorphism, making $L$ an $A$-field: the characteristic is defined to be $\wp = {\rm Ker}(\upiota )$.  If $\wp=(0)$ we say that the characteristic is generic.  A rank 1 Drinfeld $R$-module defined over $L$
 \[   \D = (\C_{\infty}, \uprho ), \]
consists of an $\F_{q}$-algebra homomorphism
\[ \uprho : R\longrightarrow L\{ \uptau\} \subset \C_{\infty}\{ \uptau\} \]
in which each $\uprho_{a}:= \uprho (a)$ has the form
 \[ \uprho_{a}(\uptau ) = \upiota (a)\uptau^{0} + a_{1}\uptau +\dots +a_{d}\uptau^{d},\quad d=\deg (a),\quad a_{1},\dots ,a_{d}\in L,\;\; a_{d}\not=0 .\]
Note that $a_{0}= \upiota (a)$ implies that
\[ \upiota = D\circ \uprho,\]
where $D=d/dx$ is the derivative with respect to $x$.  

We have the following analytical version of rank 1 Drinfeld $R$-modules. See \S 4 of \cite{Hayes0}. By a rank 1 $R$-lattice is meant a discrete rank 1 $R$-submodule $\Uplambda$ of $\C_{\infty}$.  
By Dirichlet's Unit Theorem for global fields (see Theorem 3.3 on page 102 of \cite{cohn}), $A^{\times}= \mathbb{F}_{q}^{\times}$, which implies as well that $R^{\times}=\F_{q}^{\times}$ (since ${\rm Frac}(R)=K$).  In particular,
$R$ and all of its fractional ideals are rank 1 $R$-lattices in $\C_{\infty}$, and any rank 1 $R$-lattice $\Uplambda$ is of the form $\upxi\mathfrak{a}_{R}$ for $\upxi\in \C_{\infty}$ and $\mathfrak{a}_{R}$ a fractional $R$-ideal.   To  
an $R$-lattice $\Uplambda$ we may associate the exponential function
\[ e_{\Uplambda}:\C_{\infty}\longrightarrow \C_{\infty},\quad e_{\Uplambda}(z) = z\prod_{0\not= \uplambda\in\Uplambda}\left(1- \frac{z}{\uplambda}\right). \]
The exponential function in turn defines a unique Drinfeld $R$-module $\D=(\C_{\infty},\uprho)$, isomorphic as an $R$-module to $\C_{\infty}/\Uplambda$ via $e_{\Uplambda}$ i.e.\
\begin{align}\label{fundform}  e_{\Uplambda}(az) = \uprho_{a} (e_{\Uplambda}(z)),\quad \text{for all }a\in A.\end{align}
See page 188 of \cite{Hayes0}.
Every rank 1 Drinfeld $R$-module $\D=(\C_{\infty},\uprho )$ in generic characteristic may be obtained in this way. 
 See \cite{Hayes0}, Theorem 5.9.   We denote by $\Uplambda_{\uprho}$ and $e_{\uprho}$ the corresponding
lattice and exponential map. 

At this point, we depart slightly from \cite{Hayes0} and introduce the notion of an {\bf {\em invertible rank 1 Drinfeld $R$-module}}:
a generic characteristic Drinfeld $R$-module with lattice of the form $\upxi \mathfrak{a}_{R}$, for $\mathfrak{a}_{R}\subset R$ an invertible ideal.   For
$\D$ an invertible rank 1 Drinfeld module,  \[ {\rm End}(\D)\cong {\rm End}(\mathfrak{a}_{R}) = \{ \upalpha\in K\; : \;\; \upalpha \mathfrak{a}_{R}\subset \mathfrak{a}_{R} \} =  R,\]
where the last equality follows immediately from the invertibility of $\mathfrak{a}_{R}$.

\vspace{5mm}

\noindent \fbox{\bf Important Assumption}  In what follows, all rank 1 Drinfeld $R$-modules will be assumed invertible.  

\vspace{5mm}

The Assumption appears to be a hypothesis necessary in the proof of the analog of Shimura's Main Theorem, see section \S \ref{s:D}.

Denote by $\F_{\infty}\supset \F_{q}$ the field of constants of $K_{\infty}$, $d_{\infty} = [ \F_{\infty}: \F_{q}]$. We fix a sign function: a homomorphism
\[ {\rm sgn}:K_{\infty}^{\times}\longrightarrow \F_{\infty}^{\times}\]
which is the identity on $\F_{\infty}^{\times}$ and trivial on the 1-units.  There are exactly \[ \# \F_{\infty}^{\times}=q^{d_{\infty}}-1\] sign functions.  For any $\upsigma\in {\rm Gal}(\F_{\infty}/\F_{q})$,
a twisted sign function is a homomorphism of the form $\upsigma\circ {\rm sgn}$. 

For $\D =(\C_{\infty},\uprho )$ a rank 1 Drinfeld module, the leading coefficient of $\uprho_{a}$ is denoted \[ \upmu_{\uprho}(a):= a_{d} ,\]
where $d$ is the degree of $a$ at $\infty$.
  By
  Lemma \ref{EventEveryOrder},  $R$ contains functions whose pole at $\infty$ is of order $-n$, for all $n$ sufficiently large.
Then following the standard arguments (c.f.\ \cite{Thak}, page 68), the map $R\ni a\mapsto  \upmu_{\uprho}(a)$ may be extended to $K_{\infty}^{\times}$.   The restriction of $\upmu_{\uprho}$ to $\F_{\infty}$
gives an automorphism $i_{\uprho}$ fixing $\F_{q}$ i.e.\ an element of ${\rm Gal}(\F_{\infty}/ \F_{q})$.

We say that $\D$ is {\bf {\em sign normalized}} or a {\bf {\em Hayes module}} if $\upmu_{\uprho}$ is a twisting
of the sign function.   See \cite{Hayes} for a discussion of this notion in the case where $R$ is Dedekind. 

\begin{prop}
Let $h_{R}$ be the class number of $R$. Then there are exactly $h_{R}$ isomorphism classes of rank 1 Drinfeld modules over $\C_{\infty}$.  
\end{prop}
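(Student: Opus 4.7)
The plan is to construct an explicit bijection between isomorphism classes of (invertible) rank $1$ Drinfeld $R$-modules over $\C_\infty$ and the class group ${\sf Cl}_R$. The analytic uniformization recalled just before the statement, together with the Important Assumption, tells us that every such $\D$ corresponds to an $R$-lattice of the form $\Uplambda_\uprho=\upxi\mathfrak{a}_R\subset\C_\infty$ with $\upxi\in\C_\infty^\times$ and $\mathfrak{a}_R$ an invertible fractional $R$-ideal, and conversely every such lattice determines a Drinfeld module via its exponential. First I would record that $\D_1$ and $\D_2$ are isomorphic iff $\Uplambda_{\uprho_2}=c\,\Uplambda_{\uprho_1}$ for some $c\in\C_\infty^\times$: since the units in $\C_\infty\{\uptau\}$ are precisely the nonzero constants, any isomorphism $\D_1\to\D_2$ is a constant $c\in\C_\infty^\times$ satisfying $c\uprho_1(a)=\uprho_2(a)c$ for every $a\in R$, and a direct computation using (\ref{fundform}) gives $e_{\uprho_2}(z)=c\,e_{\uprho_1}(c^{-1}z)$, so $\Uplambda_{\uprho_2}=c\,\Uplambda_{\uprho_1}$.

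Next I would introduce the map
\[ \Phi:\{[\D]\}\longrightarrow {\sf Cl}_R,\qquad [\D]\longmapsto [\mathfrak{a}_R], \]
where $\Uplambda_\uprho=\upxi\mathfrak{a}_R$ is any such decomposition, and verify that it is a well-defined bijection. For well-definedness, suppose $\upxi_1\mathfrak{a}_{R,1}=c\,\upxi_2\mathfrak{a}_{R,2}$ with $c\in\C_\infty^\times$; then $\mathfrak{a}_{R,1}=\upeta\,\mathfrak{a}_{R,2}$ for $\upeta=c\upxi_2\upxi_1^{-1}\in\C_\infty^\times$. The key observation is that both sides lie in $K$, so picking any $0\neq a\in\mathfrak{a}_{R,2}$ forces $\upeta a\in K$ and hence $\upeta\in K^\times$; thus $[\mathfrak{a}_{R,1}]=[\mathfrak{a}_{R,2}]$ in ${\sf Cl}_R$. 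Surjectivity is immediate: given an invertible class $[\mathfrak{a}_R]$, the set $\mathfrak{a}_R\subset K\subset\C_\infty$ is itself a rank $1$ $R$-lattice (discrete in $\C_\infty$ because fractional $R$-ideals are discrete in $K_\infty$, of $R$-rank $1$ because it is a fractional ideal), whose associated Drinfeld module lies in $\Phi^{-1}([\mathfrak{a}_R])$. Injectivity is the converse of well-definedness: if $\mathfrak{a}_{R,1}=k\,\mathfrak{a}_{R,2}$ with $k\in K^\times\subset\C_\infty^\times$, the lattices differ by the $\C_\infty^\times$-scalar $\upxi_1 k\upxi_2^{-1}$.

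The only mildly delicate ingredient is the passage from scaling by $\C_\infty^\times$ (which governs isomorphism of Drinfeld modules) to scaling by $K^\times$ (which defines classes in ${\sf Cl}_R$), but as indicated above this is forced by the fact that invertible fractional $R$-ideals sit inside $K$. Once this is in place, $\Phi$ is a bijection and $|\{[\D]\}|=|{\sf Cl}_R|=h_R$.
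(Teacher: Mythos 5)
Your proposal is correct and follows essentially the same route as the paper: the paper's (terse) proof also reduces the count to homothety classes of lattices of the form $\upxi\mathfrak{a}_{R}$ with $\mathfrak{a}_{R}$ invertible, citing Hayes for the Dedekind case and noting that isomorphic modules have homothetic lattices, hence the same class in ${\sf Cl}_{R}$. You have simply made explicit the bijection (well-definedness via the observation that a homothety between fractional ideals must lie in $K^{\times}$, plus surjectivity and injectivity), which the paper leaves implicit.
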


\begin{proof}
When $R=A$ is Dedekind, this is the content of \cite{Hayes0}, Corollary 5.13, page 195.  For general $R$, the proof is essentially the same: since we are assuming that our Drinfeld
modules are uniformized by lattices homothetic to invertible ideals, and isomorphic Drinfeld modules have homothetic lattices, the corresponding ideals must define the same class in ${\sf Cl}_{R}$.
\end{proof}

\begin{prop} Every isomorphism class $[\D]$ of Drinfeld $R$-module contains exactly $(q^{d_{\infty}}-1)/(q-1)$ Hayes modules.
\end{prop}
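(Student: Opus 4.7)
The plan is to identify the Hayes modules in $[\D]$ as the $\F_\infty^\times$-orbit of a single chosen Hayes module $\D_0$ under the conjugation action, and then to count that orbit. A direct computation using $\uptau^{i}x=x^{q^{i}}\uptau^{i}$ shows that conjugation by $c\in\C_\infty^\times$ transforms $\uprho$ by $c\uprho_a c^{-1}=\sum_{i}c^{1-q^{i}}a_{i}\uptau^{i}$, whence $\upmu_{c\uprho c^{-1}}(a)=c^{1-q^{\deg a}}\upmu_\uprho(a)$. For $u\in\F_\infty^{\times}$, the identity $u^{q^{d_\infty}}=u$ together with the divisibility $d_\infty\mid\deg a$ for all $a\in R\setminus\{0\}$ (since $\deg=-d_\infty\,{\rm ord}_\infty$) gives $u^{1-q^{\deg a}}=1$, so the leading coefficient is unchanged and the Hayes property is preserved. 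A standard centralizer argument in $\C_\infty\{\uptau\}$, combined with the fact that the $\uptau^1$-coefficient of $\uprho_a$ is nonzero for some $a$, shows that the stabilizer of $\D_0$ under $\C_\infty^\times$-conjugation is $\F_q^\times$; hence the $\F_\infty^\times$-orbit of $\D_0$ has exactly $(q^{d_\infty}-1)/(q-1)$ distinct Hayes modules.

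The substance of the proof is the converse: every Hayes module in $[\D_0]$ belongs to this orbit. Suppose $\D_1=c\uprho c^{-1}$ is Hayes, and write $\upmu_i=\upsigma_i\circ{\rm sgn}$ with $\upsigma_i={\rm Frob}^{j_i}\in{\rm Gal}(\F_\infty/\F_q)$, $0\leq j_i<d_\infty$. For $a\in R$ with $\deg a=nd_\infty$, the $\pi$-adic expansion $a=\pi^{-n}\upbeta_a(1+\pi(\cdots))$ in $K_\infty$ yields ${\rm sgn}(a)={\rm sgn}(\pi)^{-n}\upbeta_a$ and hence $\upmu_i(a)=\upsigma_i({\rm sgn}(\pi))^{-n}\upbeta_a^{q^{j_i}}$. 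Substituting into $\upmu_1(a)=c^{1-q^{\deg a}}\upmu_0(a)$, with $Q:=q^{d_\infty}$, yields
\[
c^{1-Q^{n}}\;=\;\left(\frac{\upsigma_{1}({\rm sgn}(\pi))}{\upsigma_{0}({\rm sgn}(\pi))}\right)^{-n}\upbeta_{a}^{q^{j_{1}}-q^{j_{0}}}.
\]
Since the left-hand side depends only on $n$, the power map $\upbeta\mapsto\upbeta^{q^{j_{1}}-q^{j_{0}}}$ must take a constant value on $\{\upbeta_a:a\in R_n\setminus R_{n-1}\}$, where $R_n:=\{a\in R:{\rm ord}_\infty(a)\geq -n\}$.

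The main obstacle is the Riemann-Roch step: showing that this set of leading coefficients exhausts $\F_\infty^\times$ for $n$ sufficiently large. By Corollary \ref{FiniteModule}, $A/R$ is finite, and by Lemma \ref{EventEveryOrder} the space $A_n$ hits every $A/R$-coset once $n$ exceeds the degrees of a chosen set of representatives; hence the injection $A_n/R_n\hookrightarrow A/R$ stabilizes to an isomorphism. Combined with $|A_n/A_{n-1}|=q^{d_\infty}$ for $n$ large (Riemann-Roch on $\Upsigma_K$), this forces the injection $R_n/R_{n-1}\hookrightarrow A_n/A_{n-1}\cong\F_\infty$ to be an isomorphism, so $\upbeta_a$ sweeps out all of $\F_\infty^\times$. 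Constancy of the power map on $\F_\infty^\times$ then forces $q^{j_{1}}\equiv q^{j_{0}}\pmod{Q-1}$, and the estimate $|q^{j_1}-q^{j_0}|\leq q^{d_\infty-1}-1<Q-1$ yields $j_{0}=j_{1}$, so $\upsigma_0=\upsigma_1$. The displayed identity then collapses to $c^{1-Q^n}=1$ for all large $n$; since $\gcd(Q^n-1,Q^{n+1}-1)=Q-1$, this gives $c^{Q-1}=1$, i.e., $c\in\F_\infty^\times$, completing the proof.
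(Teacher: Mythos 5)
The orbit--counting half of your argument is sound, and it is essentially the computation the paper delegates to Thakur: the transformation rule $\upmu_{c\uprho c^{-1}}(a)=c^{1-q^{\deg a}}\upmu_{\uprho}(a)$ together with $d_{\infty}\mid\deg a$, the exhaustion of $\F_{\infty}^{\times}$ by leading coefficients of elements of $R$ of large degree (your use of Lemma \ref{EventEveryOrder} and Corollary \ref{FiniteModule} to get $R_{n}/R_{n-1}\cong\F_{\infty}$ for $n$ large is exactly the adjustment the order case requires), and the gcd trick giving $c^{q^{d_{\infty}}-1}=1$. However, there is a genuine gap at the outset: you \emph{choose} a Hayes module $\D_{0}$ in the class, i.e.\ you assume existence. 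The proposition asserts that every isomorphism class contains exactly $(q^{d_{\infty}}-1)/(q-1)$ Hayes modules, and this positive count includes the assertion that there is at least one; your argument only shows the number is either $0$ or $(q^{d_{\infty}}-1)/(q-1)$. Existence is precisely what the paper's proof is devoted to: starting from an arbitrary representative $\uprho$ of the class, it picks a sgn-one uniformizer $\uppi$ and $z\in\C_{\infty}$ with $z^{1-q^{d_{\infty}}}=\upmu_{\uprho}(\uppi^{-1})$, so that $\uprho_{0}=z\uprho z^{-1}$ satisfies $\upmu_{\uprho_{0}}(\uppi)=1$, and then uses the extension of $\upmu_{\uprho_{0}}$ to $K_{\infty}^{\times}=\F_{\infty}((\uppi))$ to conclude $\upmu_{\uprho_{0}}(x)=i_{\uprho}({\rm sgn}(x))$, i.e.\ that $\uprho_{0}$ is a twisted-sign (Hayes) module. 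Nothing in your proposal supplies this normalization step, so the statement as formulated is not proved.

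A secondary point: your identification of the conjugation stabilizer rests on the unproved claim that some $\uprho_{a}$ has nonzero $\uptau^{1}$-coefficient; this assertion is not obvious and is not needed. Under the paper's standing assumption that the Drinfeld modules are invertible, the lattice $\Uplambda_{\uprho_{0}}$ is homothetic to an invertible ideal, so $c\uprho_{0}c^{-1}=\uprho_{0}$ forces $c\Uplambda_{\uprho_{0}}=\Uplambda_{\uprho_{0}}$, hence $c\in{\rm End}(\mathfrak{a}_{R})^{\times}=R^{\times}=\F_{q}^{\times}$. With that substitution, and with the existence step supplied as above, your counting argument goes through.
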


\begin{proof}  Let $\D =  (\C_{\infty},\uprho )$ be a representative of the isomorphism class and let $\uppi$ be a uniformizer at the prime $\infty$ with ${\rm sgn}(\uppi )=1$.  Let $z\in\C_{\infty}$ satisfy $z^{1-q^{d_{\infty}}}= \upmu_{\uprho} (\uppi^{-1})$.
Then the Drinfeld module $\D_{0}$ defined by $\uprho_{0} = z\uprho z^{-1}$ satisfies $\upmu_{\uprho_{0}} (\uppi )=1$.  From this it follows that for any $x= \sum_{i\geq n} c_{i} \uppi^{n} \in K_{\infty} = \F_{\infty} ((\uppi))$,
$c_{i}\in\F_{\infty}$, 
\[ \upmu_{\uprho_{0}} (x) = \upmu_{\uprho_{0}} (c_{n}) = \upmu_{\uprho_{0}} ({\rm sgn}(x) ) = i_{\uprho}( {\rm sgn}(x)). \]
Thus the class $[\D]$ contains the Hayes module $\D_{0}$.  The rest of the argument is the same as that in the case of a Dedekind domain, see page 69 of \cite{Thak}.
\end{proof}

 Each Hayes module $\D=(\C_{\infty},\uprho )$ is thus associated to a class of invertible ideal $\mathfrak{a}_{R}$ which we assume is contained in $R$. Then, for each Hayes module $\D$
 in the associated class, there is a transcendental element
$\upxi_{\uprho}\in \C_{\infty}$ so that the Drinfeld module of the lattice $\Uplambda_{\uprho}:=\upxi_{\uprho} \mathfrak{a}_{R}$ is $\D$.


We define the Hilbert class field $H_{R}$ associated to $R$ (c.f.\ \cite{Hayes0}, Theorem 8.10) as the minimal field of definition of any (invertible) Drinfeld $R$-module.  To state the results of Hayes Theory, which we extend
to $R$,  we also require the narrow version of the Hilbert class field.  
The narrow class group of $R$ may be identified with the quotient
\[ {\sf Cl}^{1}_{R} :={\sf I}_{R}^{\mathfrak{c}}/ {\sf P}^{\mathfrak{c},1}_{R}\] 
where ${\sf I}_{R}^{\mathfrak{c}}$ is as before the group of fractional ideals of $R$ prime to the conductor and  $ {\sf P}^{\mathfrak{c},1}_{R}$ is the subgroup of principal ideals prime to the conductor that are generated by a ${\rm sgn}$ one element.  Then 
\[ h_{R}^{1}:=\# {\sf Cl}^{1}_{R} =h_{R} (q^{d_{\infty}}-1)/(q-1) \]
where $h_{R}$ is the class number of $R$.  

We now translate some of these constructions into the id\`{e}lic language\footnote{We continue the convention of indexing factors of the id\`{e}les using primes in $A$ along with $\infty$.}, see for example \cite{Thak}, page 80.
If $\mathfrak{p}\subset A$ is prime, by $R_{\mathfrak{p}}$ we mean the completion of $R$ in $K_{\mathfrak{p}}$.
Then, if we let
\[ U_{R}^{1} :=\{ s\in \I_{K}|\; \text{for all }\mathfrak{p}\subset A, \;s_{\mathfrak{p}}\in R_{\mathfrak{p}}^{\times} \text{ and } {\rm sgn}(s_{\infty})=1\} ,\]
and if $\uppi_{\infty}$ is a uniformizer of $K_{\infty}$ with ${\rm sgn}(\uppi_{\infty})=1$, then the narrow Hilbert class field 
\[ H^{1}_{R}\]
is defined to be the class field corresponding to the group \[ \I^{1}_{R}:=K^{\times}\cdot \uppi_{\infty}^{\Z} \cdot U_{R}^{1}\subset\I_{K}.\]  Thus, 
 Artin reciprocity gives an isomorphism
\[  [\cdot, K]: \I_{K}/\I^{1}_{R}\longrightarrow {\rm Gal}(H_{R}^{1}/K), \]
which induces, on the level of ideal classes, an isomorphism
\[ {\sf Cl}^{1}_{R}\longrightarrow {\rm Gal}(H_{R}^{1}/K) ,\quad \mathfrak{a}_{R}\longmapsto \upsigma_{\mathfrak{a}_{R}}.\]
When $d_{\infty}=1$, i.e., $\F_{\infty}=\F_{q}$,
then $H_{R}^{1}=H_{R}$.

If $\mathfrak{a}_{R}\subset R$ is a (not necessarily invertible) ideal and $\D=(\C_{\infty},\uprho )$ is a Drinfeld $R$-module over a field $L$, the set $\{ \uprho_{\upalpha}|\; \upalpha\in\mathfrak{a}_{R}\}$ is a principal left ideal of 
$L\{ \uptau\}$
and has a unique generator $\uprho_{\mathfrak{a}_{R}}$ of ${\rm sgn}$ 1.
Then there exists a unique Drinfeld $R$-module defined over $L$, denoted \[ \mathfrak{a}_{R}\ast\D=(\C_{\infty},\mathfrak{a}_{R}\ast \uprho),\] for which
\[  \uprho_{\mathfrak{a}_{R}} \circ \uprho_{a} =(\mathfrak{a}_{R}\ast \uprho)_{a} \circ \uprho_{\mathfrak{a}_{R}} , \]
for all $a\in R$.
That is, $\uprho_{\mathfrak{a}_{R}}$ defines an isogeny  \[ \uprho_{\mathfrak{a}_{R}}:\D\longrightarrow \mathfrak{a}_{R}\ast\D.\] 
See \cite{Hayes0}, page 182.    If $\mathfrak{a}_{R} =(a)$ is principal, $(a)\ast\uprho$ is isomorphic to $\uprho$.  Thus, if we restrict to invertible ideals, there is an induced action of the
 group ${\sf Cl}_{R}$ on isomorphism classes of Drinfeld $R$-modules.  See \cite{Hayes0}, page 185.

When $\uprho$ is a Hayes module, $\mathfrak{a}_{R}\ast \uprho$ is also a Hayes module, so that
the $\ast$ action preserves the set of Hayes modules, and this time, the principal ideals with a monic generator act trivially.  Thus,
the set of Hayes $R$-modules is a principal homogeneous space for the $\ast$ action of ${\sf Cl}^{1}_{R}$.
We have
\begin{align}\label{HayesThm}  \mathfrak{a}_{R}\ast \D = \D^{\upsigma_{\mathfrak{a}_{R}}}  \end{align}
where $\upsigma_{\mathfrak{a}_{R}}$ is the automorphism associated to the narrow class of $\mathfrak{a}_{R}$ by Class Field Theory, and where for any automorphism $\upsigma$ of $\C_{\infty}$, $\D^{\upsigma}=(\C_{\infty}, \uprho^{\upsigma})$, with $ \uprho^{\upsigma}= \upsigma\circ \uprho$.  The statements in this paragraph do not appear as such in \cite{Hayes0} (but see Theorem 8.5 of \cite{Hayes0}), however, they may be verified using the same techniques
as in the case when $R$ is Dedekind, c.f.\  \cite{Thak}, \S 3 and particularly Theorem 3.3.4.


The narrow Hilbert class field has the following relation to any rank 1 Hayes $R$-module $\D= (\C_{\infty}, \uprho )$.  For any $a\in R$, let
\[ H^{\rm norm}_{R}\] be the {\bf {\em normalizing field}}: the field generated over $K$ by the coefficients of $\uprho_{a}$.

\begin{theo}\label{normalizingfield} $H^{\rm norm}_{R}$ is independent of the choice of Hayes module $\D$ and $a\in R$, and is equal to $H_{R}^{1}$.
\end{theo}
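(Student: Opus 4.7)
The plan is to establish $H^{\rm norm}_R = H^1_R$ by a two-way containment and then derive independence of the choices of $\D$ and of the non-constant $a$ as a formal consequence. The essential inputs are the reciprocity (\ref{HayesThm}) and the fact, recorded just above, that the set of rank $1$ Hayes $R$-modules is a principal homogeneous space under the $\ast$-action of ${\sf Cl}^1_R$.

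For the containment $H^{\rm norm}_R \subset H^1_R$: any $\sigma\in{\rm Gal}(\overline{K}/H^1_R)$ has trivial restriction to $H^1_R$, hence writing that restriction via Artin reciprocity as $\upsigma_{\mathfrak{b}_R}$ gives $[\mathfrak{b}_R]=1$ in ${\sf Cl}^1_R$; by (\ref{HayesThm}), $\D^\sigma = \mathfrak{b}_R \ast \D = \D$ as Hayes modules, so $\uprho^\sigma = \uprho$, and in particular every coefficient of $\uprho_a$ is fixed by $\sigma$.

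For the containment $H^1_R \subset H^{\rm norm}_R$: fix a Hayes module $\D=(\C_\infty,\uprho)$ and a non-constant $a\in R$, and set $L_a = K(\text{coefficients of }\uprho_a)\subset H^1_R$. Take $\sigma\in{\rm Gal}(H^1_R/L_a)$ and write $\sigma=\upsigma_{\mathfrak{b}_R}$. The intertwining relation $\uprho_{\mathfrak{b}_R}\circ\uprho_a = (\mathfrak{b}_R\ast\uprho)_a\circ\uprho_{\mathfrak{b}_R}$ combined with $(\mathfrak{b}_R\ast\uprho)_a = \uprho_a^\sigma = \uprho_a$ shows that $\uprho_{\mathfrak{b}_R}$ commutes with $\uprho_a$ in $\C_\infty\{\uptau\}$. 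The Important Assumption yields ${\rm End}(\D)=R$, and the centralizer of a non-constant $\uprho_a$ in $\C_\infty\{\uptau\}$ then coincides with $\uprho(R)$; hence $\uprho_{\mathfrak{b}_R}=\uprho_c$ for some $c\in R$, and the ${\rm sgn}$-$1$ normalization of $\uprho_{\mathfrak{b}_R}$ forces ${\rm sgn}(c)=1$. The principal left ideals $\C_\infty\{\uptau\}\uprho(\mathfrak{b}_R)$ and $\C_\infty\{\uptau\}\uprho_c$ now coincide; translating this through the analytic picture $\D\cong\C_\infty/\upxi\mathfrak{a}_R$, in which the kernel of $\uprho_{\mathfrak{n}_R}$ corresponds to $\mathfrak{n}_R^{-1}\Uplambda_{\uprho}/\Uplambda_{\uprho}$, identifies $\mathfrak{b}_R=(c)$. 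Thus $[\mathfrak{b}_R]=1$ in ${\sf Cl}^1_R$, so $\sigma$ is trivial and $L_a = H^1_R$.

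Both independence statements are now immediate: since the preceding argument applies to every allowed pair $(\D,a)$, the normalizing field is canonically $H^1_R$. The principal obstacle is the commutant computation -- specifically, pinning down that the centralizer of a non-constant $\uprho_a$ in $\C_\infty\{\uptau\}$ is exactly $\uprho(R)$ rather than something larger -- which is precisely where the invertibility assumption and the consequent identification ${\rm End}(\D)=R$ do decisive work; a secondary delicacy is the passage from $\uprho_{\mathfrak{b}_R}=\uprho_c$ to the equality $\mathfrak{b}_R=(c)$ of $R$-ideals, which must be performed via the analytic $\ast$-action correspondence rather than by a purely formal comparison of left ideals in $\C_\infty\{\uptau\}$.
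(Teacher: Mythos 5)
Your argument is correct in outline but takes a genuinely different route from the paper's. The paper proves the two independence statements directly: independence of $a$ via the recursion that expresses the coefficients of $e_{\uprho}$ (hence of any $\uprho_{b}$) algebraically in terms of those of $\uprho_{a}$, and independence of the Hayes module via transitivity of the $\ast$-action together with the relation $\uprho_{\mathfrak{a}_{R}}\circ\uprho=(\mathfrak{a}_{R}\ast\uprho)\circ\uprho_{\mathfrak{a}_{R}}$, which forces the coefficient fields to coincide; the equality $H^{\rm norm}_{R}=H^{1}_{R}$ is then obtained by running the argument of Proposition 3.3.1 of \cite{Thak}, which builds the isomorphism ${\rm Gal}(H^{\rm norm}_{R}/K)\cong{\sf Cl}^{1}_{R}$ compatibly with class field theory. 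You instead use (\ref{HayesThm}) as the engine, deduce both containments from it, and get the independence statements for free; your passage from $\uprho_{\mathfrak{b}_{R}}=\uprho_{c}$ to $\mathfrak{b}_{R}=(c)$ through kernels and lattices is sound, and your route has the merit of isolating exactly where invertibility matters.

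Two caveats. First, there is a circularity risk: in the sources this paper leans on (Hayes, \cite{Thak} \S 3.3), the statement (\ref{HayesThm}) is itself established using the normalizing-field theory — integrality and separable algebraicity of the coefficients, everywhere good reduction, and the Frobenius computation $\widetilde{\uprho}_{\mathfrak{p}_{R}}=\uptau^{\deg(\mathfrak{p}_{R})}$ for a module already known to be defined over a finite abelian extension — so deducing the present theorem from (\ref{HayesThm}) is legitimate only if (\ref{HayesThm}) is granted as an independent input, as the paper formally does; as a self-contained proof it inverts the logical order. Relatedly, your first containment tacitly assumes the coefficients of $\uprho_{a}$ lie in $K^{\rm sep}$, since you act on them with ${\rm Gal}(\overline{K}/H^{1}_{R})$; that algebraicity/separability is part of what the cited Proposition 3.3.1 proves and is not free. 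Second, the centralizer claim should be argued rather than asserted: the centralizer of a non-constant $\uprho_{a}$ in $\C_{\infty}\{\uptau\}$ is the endomorphism ring of the Drinfeld $\F_{q}[a]$-module $a\mapsto\uprho_{a}$, which by analytic uniformization equals $\{c\in\C_{\infty}\,:\, c\,\upxi\mathfrak{a}_{R}\subset\upxi\mathfrak{a}_{R}\}$, and invertibility gives $cR=c\mathfrak{a}_{R}\mathfrak{a}_{R}^{-1}\subset R$, so the centralizer is exactly $\uprho(R)$ with each multiplier realized by $\uprho_{c}$. This is indeed where the Important Assumption does the work, but note that ${\rm End}(\D)=R$ by itself (the centralizer of all of $\uprho(R)$) does not formally yield the centralizer of the single element $\uprho_{a}$.
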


\begin{proof}  The exponential $e_{\uprho}$ satisfies the functional equation: for all $a\in R$, 
\[ e_{\uprho} (x) = \uprho_{a}\circ e_{\uprho} (a^{-1}x) .\]
Then, as in \cite{Thak}, Remark 2.4.3 (1), the coefficients of $\uprho_{a}$ are determined recursively by an algebraic equation with parameters in the coefficients of $\uprho_{b}$, for any other choice of $b\in R$,
and vice verca.  Thus  $H^{\rm norm}_{R}$ is independent of the choice of $a\in R$.
The class group ${\sf Cl}^{1}_{R}$ acts transitively on the set of Hayes modules over $R$
via the $\ast$-action.  Moreover, the fundamental relation  $\uprho_{\mathfrak{a}_{R}} \circ \uprho = \uprho'\circ\uprho_{\mathfrak{a}_{R}}$
where $\uprho'=\mathfrak{a}_{R}\ast \uprho$, implies that the coefficient fields coincide, so $H^{\rm norm}_{R}$ is independent of the choice of Hayes module.  The rest of the proof is formally the same as that found in Proposition 3.3.1 of \cite{Thak}, page 70, where it is shown
that 
$ {\rm Gal}(H^{\rm norm}_{R}/K )$ is canonically isomorphic to ${\sf Cl}^{1}_{R}$ via an isomorphism compatible with that of class field theory.  It follows that $ H^{\rm norm}_{R}$ is $H_{R}^{1}$.
\end{proof}


\begin{coro} Let $\D = ( \C_{\infty}, \uprho )$ be a Hayes module.  Then the coefficients of the exponential $e_{\uprho}$ are in $H_{A}^{1}$.
\end{coro}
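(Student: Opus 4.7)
The plan is to exploit the functional equation
\[ e_\uprho(az) \;=\; \uprho_a\bigl(e_\uprho(z)\bigr),\qquad a\in R, \]
which expresses the intertwining of the linear $R$-action on $\C_\infty$ with the Drinfeld $R$-action via $e_\uprho$. Theorem \ref{normalizingfield} already places the coefficients of every $\uprho_a$ in the relevant narrow Hilbert class field; the strategy is to use the functional equation, for a single fixed $a$, as a recursion that forces the coefficients of $e_\uprho$ to inherit the same rationality.

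First I would pick a non-constant $a\in R$ of degree $d=\deg_\infty(a)$ and write
\[ e_\uprho(z) \;=\; \sum_{i\geq 0} e_i\, z^{q^i},\qquad e_0=1,\qquad \uprho_a \;=\; a\,\uptau^0 + a_1\uptau + \cdots + a_d\uptau^d. \]
Substituting these expansions into $e_\uprho(az)=\uprho_a(e_\uprho(z))$ and matching the coefficient of $z^{q^n}$ on both sides gives
\[ a^{q^n}\, e_n \;=\; \sum_{j=0}^{\min(n,d)} a_j\, e_{n-j}^{q^j}. \]
Separating off the $j=0$ term, which contributes $a\, e_n$, yields the recursion
\[ (a^{q^n}-a)\, e_n \;=\; \sum_{j=1}^{\min(n,d)} a_j\, e_{n-j}^{q^j},\qquad n\geq 1. \]

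Since $a\in K$ is non-constant, hence transcendental over $\F_q$, the factor $a^{q^n}-a$ is a nonzero element of $K^{\times}$ for every $n\geq 1$, so division is legitimate. The recursion then determines $e_n$ as an explicit $K$-rational expression in $a_1,\dots,a_d$ and in the previously-computed $e_0,\dots,e_{n-1}$ (twisted by Frobenius), so an induction on $n$, anchored at $e_0=1$, propagates the rationality of the $a_j$ (given by Theorem \ref{normalizingfield}) to every $e_n$. There is essentially no obstacle: the argument is a formal, triangular unwinding of the functional equation, directly parallel to the Dedekind case (cf.\ \cite{Thak}, Remark 2.4.3). The only point to be careful about is the identification of the ambient field appearing in the statement: the recursion puts $e_n$ into the normalizing field $H_R^1$, and to land in $H_A^1$ one invokes the compatibility between the two narrow class groups under the expansion map of Corollary \ref{surjectionofclassgroups}, which realizes $H_A^1$ as the appropriate subfield of $H_R^1$ in which the coefficients of $e_\uprho$, being determined by the $A$-span of the lattice $\Uplambda_\uprho$, are already defined.
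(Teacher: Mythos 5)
Your core computation is exactly the argument the paper intends: expanding $e_{\uprho}(az)=\uprho_{a}(e_{\uprho}(z))$, comparing coefficients of $z^{q^{n}}$, and solving the triangular recursion $(a^{q^{n}}-a)\,e_{n}=\sum_{j\geq 1}a_{j}e_{n-j}^{q^{j}}$ (legitimate since $a$ is non-constant) is what the paper compresses into the remark that $e_{\uprho}$ conjugates multiplication by $a$ into $\uprho_{a}$; the same recursion is written out explicitly later in the proof of Theorem \ref{t3}. Together with Theorem \ref{normalizingfield} this places every coefficient of $e_{\uprho}$ in the normalizing field $H_{R}^{1}$, and that is where the argument genuinely ends.

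Your final step, the descent from $H_{R}^{1}$ to $H_{A}^{1}$, is a gap. Since $R_{\mathfrak{p}}^{\times}\subseteq A_{\mathfrak{p}}^{\times}$ for every $\mathfrak{p}$, the id\`{e}le group attached to $A$ contains the one attached to $R$, so by class field theory $H_{A}^{1}\subseteq H_{R}^{1}$: membership in $H_{R}^{1}$ says nothing about membership in the \emph{smaller} field $H_{A}^{1}$, and Corollary \ref{surjectionofclassgroups} (the surjection ${\sf Cl}_{R}\twoheadrightarrow{\sf Cl}_{A}$) only re-expresses this containment; it cannot push elements of $H_{R}^{1}$ down into $H_{A}^{1}$. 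The supporting claim that the coefficients of $e_{\uprho}$ are ``determined by the $A$-span of the lattice'' is also false: $e_{\uprho}$ is the exponential of the $R$-lattice $\upxi\mathfrak{a}_{R}$ itself, and the exponential of $\upxi\mathfrak{a}_{R}A$ is a different function, uniformizing an $A$-Drinfeld module rather than $\D$. Indeed, by Theorem \ref{normalizingfield} the coefficients of $\uprho_{a}$ already generate $H_{R}^{1}$ over $K$, so whenever $R\subsetneq A$ with $H_{R}^{1}\supsetneq H_{A}^{1}$ no such descent can hold. The ``$H_{A}^{1}$'' in the statement (and in the paper's own two-line proof) is evidently a slip for $H_{R}^{1}$ --- that is the form in which the result is used later, e.g.\ in Theorem \ref{GossTheorem} --- so the correct conclusion of your recursion is membership in $H_{R}^{1}$, and your last paragraph should simply be deleted.
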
 

\begin{proof} By Theorem \ref{normalizingfield}, the coefficients of $\uprho_{a}$ are in $H_{A}^{1}$.  Since  $e_{\uprho}$ conjugates multiplication by $a$ with $\uprho_{a}$ (see equation (\ref{fundform})), the coefficients
 of $e_{\uprho}$ are also in $H_{A}^{1}$. \end{proof}

We will need the following result concerning reduction of torsion.   Fix $\gotp_{R}\subset R$ a prime ideal. 
Let $L$ be a finite Galois extension of $K$, $\mathcal{O}_{L}\supset R$ the integral closure of $R$ in $L$: we note that $\mathcal{O}_{L}$ is Dedekind, by the Krull-Akizuki Theorem (see \cite{Neu}, page 77).  Let
$\gotP\subset \mathcal{O}_{L}$ be a prime ideal which divides $\gotp_{R}$ and denote by $\mathcal{O}_{(\mathfrak{P})}$ the localization of $\mathcal{O}_{L}$ at $\mathfrak{P}$.

For $\D = (\C_{\infty},\uprho )$ a rank 1 Drinfeld $R$-module defined over $L$, we recall that $\D$ is said to have good reduction with respect to $\mathfrak{P}$ if $\D$ is, up to isomorphism, a Drinfeld module 
with coefficients in the local ring $\mathcal{O}_{(\mathfrak{P})}\subset L$ and the reduction mod $\mathfrak{P}$ is also a rank 1 Drinfeld module.  In this case, we denote by $\widetilde{\mathbb{D}}=(\C_{\infty},\widetilde{\uprho})$
the reduction of $\D$ mod $\mathfrak{P}$.

Now we assume that $\D$ is a Hayes $R$-module, and so is defined over $L=H^{1}_{R}$.  Note then that $\D$ has coefficients in $\mathcal{O}_{L}$ (see Corollary 7.4 of \cite{Hayes0}) and good reduction
 at all primes because it is of rank 1.  
Let $\gotm_{R}\subset R$ be a fixed modulus (an integral ideal, {\it not} necessarily invertible) and recall that the $\gotm_{R}$-torsion module of $\D$ is defined
\[\D[\gotm_{R}]:=\left\{x\in \C_{\infty} \; : \;\;\textsl{ }\uprho_{\gotm_{R}}(x)=0\right\} .\]  

\begin{note}\label{TorsionNote} If $\D$ is analytically uniformized by $\C_{\infty}/\mathfrak{a}_{R}$ for $\mathfrak{a}_{R}$ an invertible ideal, then the exponential map identifies 
\[ \D[\gotm_{R}] \cong (\mathfrak{m}_{R}\mathfrak{a}_{R}^{-1})^{\ast} / \mathfrak{a}_{R} \]
Here we recall that for any fractional $R$-ideal $\mathfrak{n}_{R}$,
\[  \mathfrak{n}^{\ast}_{R} = \{ \upbeta\in K\; :\;\; \upbeta\mathfrak{n}_{R}\subset R\}, \]
and that $\mathfrak{n}_{R}^{-1}= \mathfrak{n}_{R}^{\ast}$ if $\mathfrak{n}_{R}$ is invertible.
\end{note}

\begin{prop}\label{Rank1Prop} Let $\mathfrak{m}_{R}$ be prime to $\mathfrak{c}$.  Then $\D[\gotm_{R}]$ is a free $R/\gotm_{R}$-module of rank {\rm 1}.
\end{prop}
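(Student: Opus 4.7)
The plan is to use the analytic description of the torsion module from Note \ref{TorsionNote}, combined with invertibility of $\mathfrak{m}_R$ (which follows from Theorem \ref{inverttheo}, since $\mathfrak{m}_R$ is prime to $\mathfrak{c}$), and then reduce to a local calculation at primes dividing $\mathfrak{m}_R$.

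First I would write $\D$ as analytically uniformized by $\C_\infty/\mathfrak{a}_R$ for an invertible ideal $\mathfrak{a}_R \subset R$, and invoke Note \ref{TorsionNote} to obtain an isomorphism of $R$-modules
\[ \D[\mathfrak{m}_R] \;\cong\; (\mathfrak{m}_R\mathfrak{a}_R^{-1})^{\ast}/\mathfrak{a}_R. \]
Since $\mathfrak{m}_R$ is prime to the conductor it is invertible by Theorem \ref{inverttheo}, hence so is $\mathfrak{m}_R\mathfrak{a}_R^{-1}$, and we may replace the star with a genuine inverse: the right-hand side equals $\mathfrak{a}_R\mathfrak{m}_R^{-1}/\mathfrak{a}_R$. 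In particular $\mathfrak{m}_R$ annihilates this quotient, so it carries a natural $R/\mathfrak{m}_R$-module structure.

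Next I would reduce to the prime-power case. By Corollary \ref{PrimeToConductor}, $\mathfrak{m}_R = \prod_{i}\mathfrak{p}_{i,R}^{e_i}$ is a product of distinct maximal ideals of $R$, all prime to $\mathfrak{c}$. The Chinese Remainder Theorem then yields $R/\mathfrak{m}_R \cong \prod_i R/\mathfrak{p}_{i,R}^{e_i}$, and a parallel decomposition
\[ \mathfrak{a}_R\mathfrak{m}_R^{-1}/\mathfrak{a}_R \;\cong\; \bigoplus_i \mathfrak{a}_R\mathfrak{p}_{i,R}^{-e_i}/\mathfrak{a}_R, \]
so it is enough to show that each summand is free of rank $1$ over $R/\mathfrak{p}_{i,R}^{e_i}$.

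Fix therefore a prime $\mathfrak{p}_R\mid\mathfrak{m}_R$, and localize at $\mathfrak{p}_R$. Since $\mathfrak{p}_R$ is prime to $\mathfrak{c}$, Proposition \ref{WhenLocalEqual} gives $R_{(\mathfrak{p}_R)} = A_{(\mathfrak{p})}$, which is a discrete valuation ring; let $\pi$ be a uniformizer. Invertibility of $\mathfrak{a}_R$ means it is locally principal, so $(\mathfrak{a}_R)_{\mathfrak{p}_R} = (\alpha)$ for some $\alpha\in K^{\times}$, while $(\mathfrak{p}_R^{e})_{\mathfrak{p}_R}=(\pi^{e})$. Multiplication by $\alpha^{-1}\pi^{e}$ then gives an isomorphism of $R_{(\mathfrak{p}_R)}$-modules
\[ \alpha\pi^{-e}R_{(\mathfrak{p}_R)}\big/\alpha R_{(\mathfrak{p}_R)} \;\xrightarrow{\;\sim\;}\; R_{(\mathfrak{p}_R)}\big/\pi^{e}R_{(\mathfrak{p}_R)} \;=\; R/\mathfrak{p}_R^{e}, \]
the last equality holding because the quotient is already $\mathfrak{p}_R$-primary. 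Since $\mathfrak{a}_R\mathfrak{p}_R^{-e}/\mathfrak{a}_R$ is itself annihilated by a power of $\mathfrak{p}_R$, it coincides with its localization at $\mathfrak{p}_R$, so the above computation yields an unlocalized isomorphism $\mathfrak{a}_R\mathfrak{p}_R^{-e}/\mathfrak{a}_R \cong R/\mathfrak{p}_R^{e}$ of $R$-modules. Recombining the prime-power pieces via CRT gives $\D[\mathfrak{m}_R]\cong R/\mathfrak{m}_R$, as required.

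I expect the only subtle step to be step three: one must cleanly use both the invertibility of $\mathfrak{a}_R$ (to make it locally principal) and the fact that $\mathfrak{m}_R$ is prime to $\mathfrak{c}$ (to replace the localization of $R$ by a DVR via Proposition \ref{WhenLocalEqual}). Without the latter the local ring could fail to be a DVR and the torsion module at such a prime could fail to be cyclic—this is precisely why the hypothesis $(\mathfrak{m}_R,\mathfrak{c})=1$ is essential.
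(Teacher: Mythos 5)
Your proof is correct, and while it starts the same way as the paper's (analytic uniformization via Note \ref{TorsionNote}, invertibility of $\mathfrak{m}_{R}$ from primeness to $\mathfrak{c}$, and a Chinese-remainder reduction to the $\mathfrak{p}$-primary pieces), it diverges at the decisive local step. The paper, following Silverman's Proposition 1.4, stays global: it bounds $\dim_{R/\mathfrak{p}_{R}}\mathfrak{b}_{R}/\mathfrak{p}_{R}\mathfrak{b}_{R}$ by $1$ using the 2-generator property of invertible ideals (Theorem \ref{2gentheo}) and then applies Nakayama's Lemma over the Artinian local ring $R/\mathfrak{p}_{R}^{e}$ to get a rank-one free module. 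You instead localize at $\mathfrak{p}_{R}$ and use Proposition \ref{WhenLocalEqual}, which says $R_{(\mathfrak{p}_{R})}=A_{(\mathfrak{p})}$ is a DVR precisely because $\mathfrak{p}\nmid\mathfrak{c}$; there everything is principal, so multiplication by $\upalpha^{-1}\uppi^{e}$ gives an explicit isomorphism of the primary torsion piece with $R/\mathfrak{p}_{R}^{e}$, and the $\mathfrak{p}_{R}$-primary nature of the module lets you descend from the localization. Your route is more computational and makes completely transparent where the hypothesis $(\mathfrak{m}_{R},\mathfrak{c})=1$ is used (it is what forces the local rings at the relevant primes to be DVRs), and it produces the cyclic generator explicitly rather than via Nakayama; note also that at such primes local principality of $\mathfrak{a}_{R}$ is automatic from the DVR property, so you do not really need to invoke invertibility of $\mathfrak{a}_{R}$ for that point (it is still needed, via the standing Assumption, to identify $\D[\mathfrak{m}_{R}]$ with $\mathfrak{m}_{R}^{-1}\mathfrak{a}_{R}/\mathfrak{a}_{R}$ in the first place). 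The paper's argument, by contrast, leans on the order-specific Theorem \ref{2gentheo} and avoids any localization or completion machinery. Both are sound; only minor points in yours deserve a word of care, namely the routine verifications that the $\mathfrak{p}_{i}^{e_i}$ are pairwise comaximal (powers of distinct maximal ideals) and that the CRT decomposition of the torsion module has $\mathfrak{a}_{R}\mathfrak{p}_{i,R}^{-e_{i}}/\mathfrak{a}_{R}$ as its $\mathfrak{p}_{i}$-primary summand, both of which follow easily from invertibility.
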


\begin{proof}  The proof follows the structure of that of Proposition 1.4 on page 102 of \cite{Sil}.  We may assume without loss of generality that a uniformizing lattice $\Uplambda$ for $\D$ has been chosen
to be a fractional $R$-ideal.  Since $\mathfrak{m}_{R}$ is invertible, we have $\D[\gotm_{R}]\cong \mathfrak{m}^{-1}_{R}\Uplambda/\Uplambda$.  Using the Chinese remainder theorem, we may
write both $\D[\gotm_{R}]$ and $R/\mathfrak{m}_{R}$ as direct sums of primary factors, so that the Proposition splits along each factor.  That is, if $\mathfrak{p}_{R}\subset R$ is a prime not dividing
the conductor, 
 we are reduced to showing that the quotient
\[\mathfrak{b}_{R}/\mathfrak{p}^{e}_{R}   \mathfrak{b}_{R} :=  \mathfrak{m}^{-1}_{R}\Uplambda/ \mathfrak{p}^{e}_{R}    \mathfrak{m}^{-1}_{R}\Uplambda\]
is a rank 1 module over the local ring $R/\mathfrak{p}^{e}$, where $e$ is the exponent of $\mathfrak{p}_{R}$ in the prime factorization of $\mathfrak{m}_{R}$ (which exists, by Corollary \ref{PrimeToConductor} of section \S \ref{s:A}).
Write
\[ R' =R/\mathfrak{p}^{e}, \quad  \mathfrak{p}'_{R} = \mathfrak{p}_{R}/\mathfrak{p}^{e}_{R},\quad \mathfrak{b}_{R}' = \mathfrak{b}_{R}/ \mathfrak{p}^{e}_{R}\mathfrak{b}_{R},\]
and consider the quotient
\[  \mathfrak{b}_{R}' / \mathfrak{p}_{R}'\mathfrak{b}'_{R}   \cong \mathfrak{b}_{R}/\mathfrak{p}_{R}\mathfrak{b}_{R}
\]
as a vector space over $R/\mathfrak{p}_{R}$.  Since $\mathfrak{b}_{R}$ is a product of invertible ideals, it is invertible, and thus, by Theorem \ref{2gentheo}, it has the 2-generator property.  It follows then
that any two elements of 
$ \mathfrak{b}_{R}$ are $R$ dependent, so the dimension of $ \mathfrak{b}_{R}/\mathfrak{p}_{R}\mathfrak{b}_{R}$ over $R/\mathfrak{p}_{R}$ is $\leq 1$, however, if the dimension
were $0$, we would have $ \mathfrak{b}_{R}=\mathfrak{p}_{R} \mathfrak{b}_{R}$, which cannot be the case.  Thus the dimension is 1.  By Nakayama's Lemma applied
to $R'$ and the $R'$ module $\mathfrak{b}'_{R}$,  $\mathfrak{b}'_{R}$ is free of rank 1.
\end{proof}

\begin{note}\label{Rank1Note} If $\mathfrak{m}_{R} = mR$ is principal, it is invertible and $\D[\mathfrak{m}_{R}] \cong m^{-1}\Uplambda/\Uplambda$.  In this case, there is a canonical isomorphism
\[m^{-1}\Uplambda/\Uplambda \longrightarrow \Uplambda/m\Uplambda  \longrightarrow R/mR  ,\quad x + \Uplambda \longmapsto mx + m\Uplambda \longmapsto mx + mR .\]
Thus in the case, the conclusion in Proposition \ref{Rank1Prop} follows for $\mathfrak{m}_{R}$ principal, even if it is not prime to the conductor.
\end{note}

If $\D[\gotm_{R}]\subset L$,  then since $\D[\mathfrak{m}_{R}]$ consists of the roots of the monic polynomial $\uprho_{\mathfrak{m}_{R}}$, $\D[\mathfrak{m}_{R}]\subset\mathcal{O}_{L}\subset\mathcal{O}_{(\mathfrak{P})}$: it then
makes sense to reduce $ \D[\gotm_{R}]$ modulo $\mathfrak{P}$.  Denote by
  \[  \widetilde{\D}[\gotm_{R}]\]
  the $\gotm_{R}$-torsion points of the reduced Drinfeld module $\widetilde{\D}$.

\begin{lemm}\label{injtor}  Let $\mathfrak{m}_{R}\subset R$ be a (not necessarily invertible) modulus.  Suppose that $\D [\mathfrak{m}_{R}]\subset L$ and $\mathfrak{P}\nmid\mathfrak{m}_{R}$.  Then the reduction map \[\D[\gotm_{R}]\longrightarrow \widetilde{\D}[\gotm_{R}]\] is injective. 
\end{lemm}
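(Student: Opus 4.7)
The plan is to argue by contradiction, producing an element $a\in\mathfrak{m}_{R}$ that is simultaneously outside $\mathfrak{P}$ (by the coprimality hypothesis) and inside $\mathfrak{P}$ (by combining the $\mathfrak{m}_{R}$-torsion equation with the assumption that $x$ reduces to $0$). This is the standard Silverman-style argument (c.f.\ \cite{Sil}, VII.3), transported to the Drinfeld setting.

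First I would verify that the reduction map is well defined. The paragraph immediately preceding the lemma notes that $\uprho_{\mathfrak{m}_{R}}$ is monic with $\D[\mathfrak{m}_{R}]$ as its set of roots, so any $x\in\D[\mathfrak{m}_{R}]\subset L$ automatically lies in $\mathcal{O}_{L}\subset\mathcal{O}_{(\mathfrak{P})}$. Fix then a nonzero $x\in\D[\mathfrak{m}_{R}]$ and assume, toward a contradiction, that $x\equiv 0\pmod{\mathfrak{P}}$.

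Now comes the main step. The hypothesis $\mathfrak{P}\nmid\mathfrak{m}_{R}$ means $\mathfrak{m}_{R}\not\subset\mathfrak{P}\cap R$, so I can pick $a\in\mathfrak{m}_{R}$ with $a\notin\mathfrak{P}$. Write
$$\uprho_{a}(X)=aX+c_{1}X^{q}+\cdots+c_{d}X^{q^{d}},$$
where the $c_{i}\in\mathcal{O}_{(\mathfrak{P})}$ by good reduction (automatic because $\D$ has rank $1$). Since $a\in\mathfrak{m}_{R}$, the relation $\uprho_{a}(x)=0$ holds, and dividing through by $x\ne 0$ gives
$$a=-c_{1}x^{q-1}-c_{2}x^{q^{2}-1}-\cdots-c_{d}x^{q^{d}-1}.$$
Reducing modulo $\mathfrak{P}$ and using $x\equiv 0$ collapses every term on the right to zero, forcing $a\in\mathfrak{P}$, a contradiction.

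I do not anticipate any real obstacle. The only potentially delicate point is guaranteeing that all coefficients $c_{i}$ of $\uprho_{a}$ are $\mathfrak{P}$-integral, but this is precisely the content of good reduction, and good reduction at every prime is automatic in rank $1$ (as remarked in the text just before the lemma). The identification $\upiota(a)=a$ used in the constant term is also unproblematic because $R\hookrightarrow L$ is the natural inclusion and no sign normalization enters the argument.
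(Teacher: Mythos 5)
Your argument is correct, but it follows a different route from the paper's. The paper first replaces the generators of $\mathfrak{m}_{R}$ by generators $m_{1},\dots ,m_{k}$ none of which lie in $\mathfrak{P}$ (adding $m_{1}$ to any offending generator), observes that $\D[\mathfrak{m}_{R}]=\D[m_{1}]\cap\cdots\cap\D[m_{k}]$, and then settles the principal case $\mathfrak{m}_{R}=(m)$, $m\notin\mathfrak{P}$, by noting that $\uprho_{m}$ is $\mathfrak{P}$-primitive, that good (rank $1$) reduction makes $\widetilde{\uprho}_{m}$ separable, and invoking Hensel's Lemma — which in fact gives \emph{bijectivity} of reduction on $m$-torsion. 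You instead pick a single $a\in\mathfrak{m}_{R}\setminus\mathfrak{P}$ (available precisely because $\mathfrak{P}\nmid\mathfrak{m}_{R}$, the same use of the hypothesis as in the paper), use $\uprho_{a}(x)=0$, divide by a putative nonzero $x$ in the kernel of reduction, and conclude $a\equiv 0 \pmod{\mathfrak{P}}$, a contradiction; since reduction is additive and $\D[\mathfrak{m}_{R}]$ is a group, triviality of the kernel gives injectivity (you might state that last step explicitly). Your version is more elementary — it needs only $\mathfrak{P}$-integrality of the coefficients of $\uprho_{a}$ and of the torsion points, with no appeal to separability, Hensel lifting, or nondegeneracy of the reduced module — and it requires only one element of $\mathfrak{m}_{R}$ outside $\mathfrak{P}$ rather than a full generating set. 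What it does not deliver is the stronger conclusion implicit in the paper's principal case (surjectivity of reduction on $\D[m]$), but that is not needed for the lemma as stated, so your proof is a complete and somewhat leaner substitute.
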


\begin{proof}   We note that the Lemma is true for $\mathfrak{m}_{R}=(m)$ principal: indeed, the polynomial $\uprho_{m}$ defining the $m$-torsion points
is $\mathfrak{P}$ primitive (i.e.\ $\not\equiv 0$ mod $\mathfrak{P}$) and since we have good reduction at $\mathfrak{P}$, $\widetilde{\D}$ is a rank 1 Drinfeld module hence $\widetilde{\uprho}_{m}$ reduces to a separable polynomial.  Then, we may apply Hensel's Lemma to conclude that the reduction map on $m$-torsion is injective (in fact bijective).   For general $\mathfrak{m}_{R}$, not necessarily invertible,
we have that $\mathfrak{m}_{R}$ is finitely generated.  We first observe that we may choose generators $m_{1},\dots ,m_{k}$ of $\mathfrak{m}_{R}$ none of which are in $\mathfrak{P}$.  Indeed, by hypothesis, at least
one of them, say $m_{1}$, is not in $ \mathfrak{P}$. If $m_{2}\in \mathfrak{P}$ then we replace $m_{2}$ by $m_{1}+m_{2}\notin \gotP$.  We then proceed inductively, as needed, to replace any
generators in $\mathfrak{P}$ by generators that are not in $\mathfrak{P}$.  We claim that $\D[\gotm_{R}]=\D[m_{1}]\cap \cdots \cap \D[m_{k}]$. Indeed, the inclusion $\D[\gotm_{R}]\subset\D[m_{1}]\cap \cdots \cap \D[m_{k}]$ follows from
\[\D[\gotm_{R}]=\bigcap_{m\in \gotm_{R}}\D[m]. \] On the other hand, for $m=a_{1}m_{1}+\cdots +a_{k}m_{k}$, $\uprho_{m}=\uprho_{a_{1}}\uprho_{m_{1}}+\cdots +\uprho_{a_{k}}\uprho_{m_{k}}$ and the vanishing of all of the $\uprho_{m_{1}},\dots ,\uprho_{m_{k}}$ implies that of $\uprho_{m}$, which proves the claim. Since the reduction map commutes with the intersection this shows injectivity.  

\end{proof}


\begin{note}\label{RedFrob} Let $\D$ be a Hayes $R$-module, $\mathfrak{p}_{R}\subset R$ an invertible prime ideal and consider the isogeny
\[ \uprho_{\mathfrak{p}_{R}}: \D\longrightarrow \mathfrak{p}_{R}\ast \D\]
between Hayes modules.  Let $\mathfrak{P}$ in $L$ be above $\mathfrak{p}_{R}$.  Then the associated reduction map 
\[ \widetilde{\uprho}_{\mathfrak{p}_{R}}: \widetilde{\D}\longrightarrow \widetilde{\mathfrak{p}_{R}\ast \D} \]
is equal to $\uptau^{\deg(\mathfrak{p}_{R})}$, where 
$ \deg (\mathfrak{p}_{R}) =
 \dim _{\F_{q}} ( A/\mathfrak{p}_{R}A  )  $.
 See\footnote{Note however that Hayes uses the $p$-Frobenius $\upvarphi(x)=x^{p}$ in the statement of Corollary 3.8, 
 whereas we use the $q$-Frobenius $\uptau(x) = x^{q}=\upvarphi^{n}(x)$, which is why his statement differs slightly from ours.} Corollary 3.8 of \cite{Hayes0}.
\end{note}







\section{Shimura's Main Theorem for Drinfeld Modules over Rank 1 Orders}\label{s:D}

In this section we prove Shimura's Main Theorem for Complex Multiplication in the setting of the order $R$.  This was proved in \cite{DGV} in the case $R=A$.
We begin with a few remarks on Class Field Theory adapted to the order $R$.   

Let $\I_{K}$ be the group of $K$-id\`{e}les, $C_{K}= \I_{K}/K^{\times}$ the id\`{e}le class group. 
If $S$ is a finite set of places, $\I^{S}$ is the subgroup of $S$-id\`{e}les:
\[ \I_{K}^{S} =  \prod_{\mathfrak{p}\in S} K^{\times}_{\mathfrak{p}} \cdot \prod_{\mathfrak{p}\not\in S} A^{\times}_{\mathfrak{p}}.\]
When $S= \{ \infty\}$, we write $\I_{A}:=\I^{\{ \infty\}}_{K}$.  The subgroup 
\[  C_{A} =  \left( \I_{A}\cdot K^{\times}\right) /K^{\times} < C_{K} \]
corresponds to the Hilbert class field $H_{A}$ via reciprocity.

In this spirit, for an order $R\subset A$, we define \[ \I_{R} := K^{\times}_{\infty} \cdot \prod_{\mathfrak{p}\not=\infty} R^{\times}_{\mathfrak{p}} < \I_{A}.\]
 Then
the Hilbert class field $H_{R}$ of $R$ corresponds to the subgroup $C_{R}<C_{K}$ defined
\[ C_{R} = \left( \I_{R}\cdot K^{\times}\right) /K^{\times}.\]
The narrow Hilbert class field $H^{1}_{R}$ corresponds to 
\[ C^{1}_{R} = \left( \I^{1}_{R}\cdot K^{\times}\right) /K^{\times}\]
where 
\[ \I^{1}_{R} =(K_{\infty}^{\times})_{+}\times \prod_{\mathfrak{p}}  R^{\times}_{\mathfrak{p}}  \]
and where $(K_{\infty}^{\times})_{+} < K_{\infty}^{\times}$ is the subgroup of sign 1 elements. 

 If $\mathfrak{m}\subset A$ is an ideal with prime factorization $\mathfrak{m}=\prod\mathfrak{p}^{n_{\mathfrak{p}}}$, $\mathfrak{m}_{R}$ its contraction to $R$, the
narrow ray class field $K_{\mathfrak{m}_{R}}$ corresponds  to 
\[ C_{\mathfrak{m}_{R}} = \left(  \I_{\mathfrak{m}_{R}} \cdot K^{\times}\right)/ K^{\times} < C_{R}<C_{K}\] where
\[   \I_{\mathfrak{m}_{R}} =(K_{\infty}^{\times})_{+} \times \prod U_{\mathfrak{p},R}^{(n_{\mathfrak{p}})},\quad U_{\mathfrak{p},R}^{(n_{\mathfrak{p}})} := U_{\mathfrak{p}}^{(n_{\mathfrak{p}})} \cap R_{\mathfrak{p}} . 
\]
(We recall that $U_{\mathfrak{p}}^{(n)}$ is the $n$th higher unit group of $K^{\times}_{\mathfrak{p}}$.)

Given an extension $L/K$, let 
$\mathfrak{m}
$
be the product of primes of $A$ ramifying in $L/K$
and let $\mathfrak{m}_{R}$ be the contraction of $\mathfrak{m}$ to $R$.  Let 
\[  {\sf I}_{\mathfrak{m}_{R}}^{\mathfrak{c}} \]
be the group of $R$-ideals prime to both $\mathfrak{c}$ and  $\mathfrak{m}_{R}$; by Theorem \ref{bijectionprimetoc} of section \S \ref{s:A}, it is isomorphic to $ {\sf I}_{\mathfrak{m}}^{\mathfrak{c}}$, the group of ideals in $A$ prime to both $\mathfrak{c}$ and $\mathfrak{m}$.  
Thus we may consider the Artin symbol as defining a map
\[ (\cdot, L/K): {\sf I}_{\mathfrak{m}_{R}}^{\mathfrak{c}} \longrightarrow {\rm Gal}(L/K) .\]

 In the id\`{e}lic formulation, the reciprocity map 
\[  [\cdot, K]:\I_{K}\longrightarrow {\rm Gal}(K^{\rm ab}/K)\]
satisfies 
\[   [s,K]|_{L} = ((s), L/K) \]
where now $(s)\subset A$ is the $A$-ideal  associated to $s\in\I_{K}$. 

As we have seen in the previous sections, when working with the order $R$, it is convenient to work with ideal classes represented by an ideal prime to the conductor $\mathfrak{c}$;  similarly, it will often be useful to restrict to id\`{e}les prime to $\mathfrak{c}$.
Define
\begin{align}\label{DefIUpperR} \I_{R}< \I^{R} := K^{\times}_{\infty} \cdot \prod_{\mathfrak{p}| \mathfrak{c}} R^{\times}_{\mathfrak{p}} \cdot \prod'_{\infty\not=\mathfrak{p}\;\nmid \;\mathfrak{c}}  K^{\times}_{\mathfrak{p}} ,
\end{align}
where the primes range over those in $A$, and 
where the final product is restricted over the $R^{\times}_{\mathfrak{p}} =A^{\times}_{\mathfrak{p}}$ for $\mathfrak{p}\nmid \mathfrak{c}$.
\begin{prop}  Let  $\I^{R} $ be as above.  Then 
\begin{align}\label{RIdeles} \I_{K} = \I^{R}   \cdot K^{\times}.\end{align}
\end{prop}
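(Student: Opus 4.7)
The plan is to show that any idele $s\in\I_K$ can be scaled by a principal idele $\alpha\in K^\times$ so that $\alpha^{-1}s\in\I^R$. Of the constraints defining $\I^R$, there is none at $\infty$, and at finite primes $\mathfrak{p}\nmid\mathfrak{c}$ the only requirement is the restricted-product condition $s_\mathfrak{p}\in A_\mathfrak{p}^\times$ for almost all $\mathfrak{p}$. The real work is at the finitely many primes $\mathfrak{p}\mid\mathfrak{c}$, where we need the component to lie in $R_\mathfrak{p}^\times$ (not merely $A_\mathfrak{p}^\times$); the strategy is to approximate $s$ at these primes by a single element of $K^\times$.

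First I would establish the local statement: for each $\mathfrak{p}\mid\mathfrak{c}$ there exists $n_\mathfrak{p}\geq 1$ with $1+(\mathfrak{p} A_\mathfrak{p})^{n_\mathfrak{p}}\subset R_\mathfrak{p}^\times$. Indeed, by the argument in the proof of Proposition \ref{CompletionProp}, $R_{(\mathfrak{p}_R)}$ is an order in the DVR $A_{(\mathfrak{p})}$ whose local conductor $\mathfrak{c}_{(\mathfrak{p})}\subset R_{(\mathfrak{p}_R)}$ is a power of $\mathfrak{p} A_{(\mathfrak{p})}$, so after completing we obtain $n_\mathfrak{p}\geq 1$ with $(\mathfrak{p} A_\mathfrak{p})^{n_\mathfrak{p}}\subset R_\mathfrak{p}$. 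For any $u=1+x$ with $x\in(\mathfrak{p} A_\mathfrak{p})^{n_\mathfrak{p}}$, then $u\in R_\mathfrak{p}$ and $v_\mathfrak{p}(u)=0$, so $u\notin\mathfrak{p} A_\mathfrak{p}\supset\mathfrak{p}_R R_\mathfrak{p}$. Since $R_\mathfrak{p}$ is local with maximal ideal $\mathfrak{p}_R R_\mathfrak{p}$, this forces $u\in R_\mathfrak{p}^\times$.

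Next I would invoke weak approximation for $K$ applied to the finite set of primes $\{\mathfrak{p}\mid\mathfrak{c}\}$: choose $\alpha\in K^\times$ such that $\alpha/s_\mathfrak{p}\in 1+(\mathfrak{p} A_\mathfrak{p})^{n_\mathfrak{p}}$ for each $\mathfrak{p}\mid\mathfrak{c}$. By the previous paragraph, $\alpha^{-1}s_\mathfrak{p}\in R_\mathfrak{p}^\times$ at every such $\mathfrak{p}$. Setting $t=\alpha^{-1}s$, we verify $t\in\I^R$ component by component: $t_\infty\in K_\infty^\times$ is unconstrained; $t_\mathfrak{p}\in R_\mathfrak{p}^\times$ for $\mathfrak{p}\mid\mathfrak{c}$ by construction; and for $\mathfrak{p}\nmid\mathfrak{c}$ finite, $t_\mathfrak{p}\in K_\mathfrak{p}^\times$ automatically, while $t_\mathfrak{p}\in A_\mathfrak{p}^\times$ at all but finitely many such $\mathfrak{p}$ (because $s$ is an idele and $\alpha\in K^\times$ is a $\mathfrak{p}$-adic unit at all but finitely many primes). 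Thus $t\in\I^R$ and $s=\alpha t\in K^\times\cdot\I^R$.

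The genuinely nontrivial step is the first — showing that a small $1$-neighborhood in $A_\mathfrak{p}$ lies not just in $A_\mathfrak{p}^\times$ but inside $R_\mathfrak{p}^\times$. This is what forces us to go through the local conductor and to use that $R_\mathfrak{p}$ is local with maximal ideal controlled by $\mathfrak{p} A_\mathfrak{p}$. Once this is in hand, the remainder is a direct application of weak approximation.
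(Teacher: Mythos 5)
Your argument is correct and is essentially the paper's own proof: both approximate the idele at the finitely many primes dividing $\mathfrak{c}$ by an element of $K^{\times}$ and then use the fact that the local conductor is a power of $\mathfrak{p}A_{\mathfrak{p}}$, so that a sufficiently deep $1$-unit neighborhood lies in $R_{\mathfrak{p}}^{\times}$. Your explicit verification that such $1$-units are invertible in $R_{\mathfrak{p}}$ (via locality of $R_{\mathfrak{p}}$) just fills in a step the paper states more briefly as $1+\mathfrak{c}_{\mathfrak{p}}\subset R_{\mathfrak{p}}^{\times}$.
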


\begin{proof} 
Given $\upalpha = (\upalpha_{\mathfrak{p}}) \in \I_{K}$, by the Approximation Theorem, for any $\upvarepsilon$, we may find $a\in K$ so that for all $\mathfrak{p}|\mathfrak{c}$, $|a-\upalpha_{\mathfrak{p}}^{-1}|<\upvarepsilon$.  In particular, given $N\in \N$, we may find $a\in K$ so that for all $\mathfrak{p}|\mathfrak{c}$,
\[ a\upalpha_{\mathfrak{p}} \in U^{(N)}_{\mathfrak{p}} = \text{$N$th higher unit group} := 1+\mathfrak{p}_{\mathfrak{p}}^{N}.\]
On the other hand, since $\mathfrak{c}\subset\mathfrak{p}$, the completion $\mathfrak{c}_{\mathfrak{p}}$ is a power of $\mathfrak{p}_{\mathfrak{p}}$,  
\[ \mathfrak{c}_{\mathfrak{p}} = \mathfrak{p}_{\mathfrak{p}}^{s_{\mathfrak{p}}},\quad s_{\mathfrak{p}} \in \N. \]
So choosing $N$ sufficiently large, $ a\upalpha_{\mathfrak{p}} \in 1+ \mathfrak{c}_{\mathfrak{p}}$, for all $\mathfrak{p}|\mathfrak{c}$.  But since $\mathfrak{c}\subset R$, $1+ \mathfrak{c}_{\mathfrak{p}}\in R^{\times}_{\mathfrak{p}}$.  Thus, $a\upalpha\in \I^{R}$, which proves (\ref{RIdeles}). \end{proof}

Using the Proposition, we may write
\begin{align}\label{altCK} C_{K} = \frac{ \I^{R}   \cdot K^{\times}}{ K^{\times}} \cong \frac{ \I^{R} }{  \I^{R}   \cap K^{\times}} .\end{align}
which is framed in terms of the particular order $A_{f}$.
By (\ref{RIdeles}) and the fact that $K^{\times} \subset {\rm Ker}( [\cdot, K])$,
\[  {\rm Image}\big( [\cdot, K] \big)  =  [\I_{K},K]  = [\I^{R},K].\]

Given $L/K$ an abelian extension, the integral closure of $R$ in $L$ is Dedekind.  Then it makes sense to speak of a prime  $\mathfrak{p}_{R}\subset R$ which is unramified in $L$.  Assume also $\mathfrak{p}_{R}$
is prime to the conductor, so that it is the contraction of a unique $\mathfrak{p}\subset A$.  Then $A_{\mathfrak{p}} = R_{\mathfrak{p}}$.   If
$\uppi_{\mathfrak{p}}$ is a uniformizer of $K^{\times}_{\mathfrak{p}}$, identified as an id\`{e}le by taking 1's at all other places, then if $\upsigma_{\mathfrak{p}}\in {\rm Gal}(L/K)$ denotes the Frobenius associated 
to $\mathfrak{p}$, then
\begin{align}\label{piandFrob} [\uppi_{\mathfrak{p}}, K] |_{L} = \upsigma_{\mathfrak{p}} .\end{align}

Let $\mathfrak{a}_{R}\subset {\sf I}^{\mathfrak{c}}_{R}$: by primeness to $\mathfrak{c}$, $\mathfrak{a}_{R}$ is the contraction of a unique ideal of $\mathfrak{a}\subset A$.   For any prime $\mathfrak{p}\subset A$ 
(not necessarily prime to $\mathfrak{c}$),
denote 
\[ \mathfrak{a}_{R, \mathfrak{p}} :=\mathfrak{a}_{R}R_{\mathfrak{p}}. \]  The quotient 
\[ K_{\mathfrak{p}}/ \mathfrak{a}_{R,\mathfrak{p}} \]
is then an $R$-module.

\begin{theo}\label{AfModIso}  Let $\mathfrak{a}_{R}\in {\sf I}^{\mathfrak{c}}_{R}$. Then there is an isomorphism of $R$-modules 
\[  K/\mathfrak{a}_{R}\cong   \bigoplus_{\mathfrak{p}\subset A} K_{\mathfrak{p}}/ \mathfrak{a}_{R,\mathfrak{p}}.\]
\end{theo}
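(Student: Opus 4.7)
The plan is to define the natural $R$-linear map
\[ \varphi : K \longrightarrow \bigoplus_{\mathfrak{p}\subset A} K_{\mathfrak{p}}/\mathfrak{a}_{R,\mathfrak{p}}, \quad x \longmapsto (x \bmod \mathfrak{a}_{R,\mathfrak{p}})_{\mathfrak{p}}, \]
and verify separately that (i) $\varphi$ lands in the direct sum, (ii) $\ker\varphi = \mathfrak{a}_R$, and (iii) $\varphi$ is surjective.

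For (i), any fixed $x\in K$ has $x\in A_{\mathfrak{p}}$ for all but finitely many $\mathfrak{p}$; moreover, since $\mathfrak{a}_R$ is prime to $\mathfrak{c}$ and invertible, Theorem \ref{bijectionprimetoc} together with Proposition \ref{WhenLocalEqual} gives $\mathfrak{a}_{R,\mathfrak{p}}=\mathfrak{a}_{\mathfrak{p}}=A_{\mathfrak{p}}$ for every $\mathfrak{p}$ outside the finite set of primes dividing $\mathfrak{a}$ or $\mathfrak{c}$. Combining these observations, $\varphi(x)$ has finite support.

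For (ii), an element $x\in K$ lies in $\ker\varphi$ iff $x\in K\cap \mathfrak{a}_{R,\mathfrak{p}}$ for every prime $\mathfrak{p}\subset A$. Corollary \ref{CompletionIntersectCoro} (applied to $R$ as well as to the principal pieces of $\mathfrak{a}_R$ via Theorem \ref{2gentheo}) yields a case-split: $K\cap \mathfrak{a}_{R,\mathfrak{p}}=\mathfrak{a}_{(\mathfrak{p})}$ when $\mathfrak{p}\mid\mathfrak{a}$, equals $A_{(\mathfrak{p})}$ when $\mathfrak{p}\nmid \mathfrak{a}\mathfrak{c}$, and equals $R_{(\mathfrak{p}_R)}$ when $\mathfrak{p}\mid\mathfrak{c}$. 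The first two classes together intersect to give $\mathfrak{a}$, by the standard Dedekind identity $\mathfrak{a}=\bigcap_{\mathfrak{p}}\mathfrak{a}_{(\mathfrak{p})}$; intersecting further over $\mathfrak{p}\mid\mathfrak{c}$ with $R_{(\mathfrak{p}_R)}$, and using the elementary fact $R=\bigcap_{\mathfrak{p}_R} R_{(\mathfrak{p}_R)}$ (valid for any integral domain) together with Proposition \ref{WhenLocalEqual} for primes $\mathfrak{p}\nmid\mathfrak{c}$, the total intersection collapses to $\mathfrak{a}\cap R = \mathfrak{a}_R$.

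The main obstacle is (iii), surjectivity, which I handle via strong approximation for $K$ with respect to $\infty$: the diagonal image of $K$ is dense in the restricted product $\prod'_{\mathfrak{p}\neq \infty}K_{\mathfrak{p}}$ with respect to the subrings $A_{\mathfrak{p}}$. Given a finitely supported tuple $(y_{\mathfrak{p}})$ in the target, enlarge its support to a finite set $S$ containing every prime dividing $\mathfrak{a}$ or $\mathfrak{c}$. Each $\mathfrak{a}_{R,\mathfrak{p}}$ is an open neighborhood of $0$ in $K_{\mathfrak{p}}$, and $\mathfrak{a}_{R,\mathfrak{p}}=A_{\mathfrak{p}}$ for $\mathfrak{p}\notin S$; strong approximation then produces $x\in K$ with $x-y_{\mathfrak{p}}\in \mathfrak{a}_{R,\mathfrak{p}}$ for $\mathfrak{p}\in S$ and $x\in A_{\mathfrak{p}}=\mathfrak{a}_{R,\mathfrak{p}}$ for $\mathfrak{p}\notin S$. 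This element satisfies $\varphi(x)=(y_{\mathfrak{p}})$, completing the proof.
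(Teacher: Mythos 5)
Your proposal is correct, but it takes a genuinely different route from the paper. The paper follows the Silverman-style two-step argument: it first decomposes the torsion module $M=K/\mathfrak{a}_{R}$ into its $\mathfrak{p}$-primary pieces via a sum-of-coordinates map $S:\bigoplus M[\mathfrak{p}^{\infty}]\to M$ (using a CRT-type construction of elements $\upvarepsilon_{i}$ from a primary decomposition of $\upalpha R$), and then identifies each piece $M[\mathfrak{p}^{\infty}]$ with $K_{\mathfrak{p}}/\mathfrak{a}_{R,\mathfrak{p}}$ by a careful analysis with primary ideals, the functions ${\rm ord}_{\mathfrak{q}_{R}}$, and an approximation argument with integrality constraints split according to whether $\mathfrak{p}\nmid\mathfrak{c}$ or $\mathfrak{p}\mid\mathfrak{c}$. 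You instead define the diagonal map $K\to\bigoplus K_{\mathfrak{p}}/\mathfrak{a}_{R,\mathfrak{p}}$ directly, compute its kernel by intersecting localizations (using $R=\bigcap R_{(\mathfrak{p}_{R})}$, Proposition \ref{WhenLocalEqual}, and Corollary \ref{CompletionIntersectCoro}), and get surjectivity from strong approximation away from $\infty$; this is shorter and conceptually cleaner, at the cost of invoking strong approximation as a black box, whereas the paper's longer argument is more self-contained and, importantly, its internal objects (the map $S$ and the elements $\upvarepsilon_{\mathfrak{p}}$) are reused verbatim later in the proof of Lemma \ref{DirSumTor}, so your proof could not simply be substituted without reworking that lemma. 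Two small points you should tighten: first, reduce to the case $\mathfrak{a}_{R}\subset R$ integral (the statement allows fractional ideals in ${\sf I}^{\mathfrak{c}}_{R}$); second, in the kernel computation the case $\mathfrak{p}\mid\mathfrak{c}$ rests on the identity $\mathfrak{a}_{R,\mathfrak{p}}=R_{\mathfrak{p}}$, which follows not from Theorem \ref{2gentheo} but from primeness to the conductor: an element $a\in\mathfrak{a}_{R}\setminus\mathfrak{p}_{R}$ exists because $\mathfrak{a}_{R}+\mathfrak{c}=R$ and $\mathfrak{c}\subset\mathfrak{p}_{R}$, and such an $a$ is a unit of $R_{\mathfrak{p}}$ since $a^{-1}\in R_{(\mathfrak{p}_{R})}\subset R_{\mathfrak{p}}$; only after this does Corollary \ref{CompletionIntersectCoro} give $K\cap\mathfrak{a}_{R,\mathfrak{p}}=R_{(\mathfrak{p}_{R})}$ (this is exactly the point the paper handles in its step \ding{204}). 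With those details filled in, your argument is a valid alternative proof.
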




\begin{proof}  We follow the general structure of the proof as presented in the case of Dedekind domains (see Lemma 8.1 of Chapter II, \S 8 of \cite{Sil}), making adjustments where necessary. We may assume without loss of generality that $\mathfrak{a}_{R}\subset R$: indeed, simply take $\upalpha\in \mathfrak{a}_{R}^{-1}$ and write $\mathfrak{a}'_{R}=\upalpha\mathfrak{a}_{R}\subset R$. 
If the isomorphism asserted in the Theorem exists for $\mathfrak{a}_{R}'$, we may obtain a corresponding isomorphism for $\mathfrak{a}_{R}$ by multiplying by $\upalpha^{-1}$.

Let us write $M=K/\mathfrak{a}_{R}$ and define $M[\mathfrak{p}^{\infty}]$ as the submodule of elements of $M$ killed by some power of $\mathfrak{p}_{R}=\mathfrak{p}\cap R$.  We claim
that the sum-of-coordinates map
\[ S:   \bigoplus_{\mathfrak{p}\subset A}  M[\mathfrak{p}^{\infty}]\longrightarrow M,\quad \upmu = (\upmu_{\mathfrak{p}})\longmapsto \sum  \upmu_{\mathfrak{p}} \]
is an isomorphism.   Let $\upmu\in {\rm Ker}(S)$.  Now for each $\mathfrak{p}$ let $e(\mathfrak{p})$ be the minimal exponent so that $\mathfrak{p}_{R}^{e(\mathfrak{p})}\upmu_{\mathfrak{p}} =(0)$.
Fix $\mathfrak{q}\subset A$ a prime and write 
\[ \mathfrak{d}_{R} := \prod_{\mathfrak{p}\not=\mathfrak{q}}\mathfrak{p}_{R}^{e(\mathfrak{p})} ,\]
so that $\mathfrak{d}_{R}\upmu_{\mathfrak{p}}=(0)$ for all $\mathfrak{p}\not=\mathfrak{q}$.  On the other hand, we have as well $\mathfrak{d}_{R}\upmu_{\mathfrak{q}}= \mathfrak{d}_{R}S (\upmu ) = (0)$.
We note that $\mathfrak{d}_{R}$ is prime to $\mathfrak{q}_{R}$, that is, $\mathfrak{d}_{R}+ \mathfrak{q}^{e(\mathfrak{q})}_{R} = R$.  Therefore
\[ \upmu_{\mathfrak{q}} R = \upmu_{\mathfrak{q}}  (\mathfrak{d}_{R}+ \mathfrak{q}_{R}^{e(\mathfrak{q})} )  = (0)\]
hence $\upmu_{\mathfrak{q}}=0$, and since $\mathfrak{q}$ is arbitrary, $\upmu=0$.  Thus $S$ is injective.  

On the other hand, let $m\in M$.  We note that $M$ is a torsion
$R$-module, so there exists $\upalpha \in R$, non constant (i.e.\ not an $A$-unit) such that $\upalpha m=0$.  The principal ideal $\upalpha A\subsetneq A$ has a nontrivial prime decomposition in $A$
\[ \upalpha A = \prod_{i=1}^{s} \mathfrak{p}_{i}^{e_{i}}\]
 which contracts to a {\it primary decomposition}\footnote{The contraction of the prime decomposition would also be a prime decomposition if the prime factors of $\upalpha A$ are prime to $\mathfrak{c}$.  Otherwise
it is not necessarily the case that $(\mathfrak{p}^{e})_{R} = (\mathfrak{p}_{R})^{e}$.  Nonetheless, the contraction of a power of a prime $\mathfrak{r}=(\mathfrak{p}^{e})_{R} $ is primary.}
\[ \upalpha R  = \mathfrak{r}_{1}\cap\cdots \cap  \mathfrak{r}_{s} .\]
If we write \[ \mathfrak{e}_{i }=  \bigcap_{j\not=i}\mathfrak{r}_{j} \supset \upalpha R\]
then the set of $\mathfrak{e}_{i}$ are coprime.  Indeed, there is no contraction $(\mathfrak{p}_{i})_{R}$ which divides all of the $\mathfrak{e}_{i}$.  If there were a prime in $R$ dividing all
of the $\mathfrak{e}_{i}$, this prime would be a contraction $\mathfrak{p}_{R}$ of $\mathfrak{p}\subset A$, by Proposition \ref{contractionprop}.  Then 
$ \mathfrak{p}_{R}\supset \upalpha R $ and expanding this inclusion to $A$ gives
\[ \mathfrak{p} \supset \mathfrak{p}_{R} A\supset \upalpha A,\]
which implies that $\mathfrak{p}|\upalpha A$, forcing $\mathfrak{p}=\mathfrak{p}_{i}$
for some $i$.  Thus, we conclude that
 $\sum \mathfrak{e}_{i} = R$. 
If we take $\upvarepsilon_{i}\in \mathfrak{e}_{i}$ with $\sum \upvarepsilon_{i}=1$
then $\upvarepsilon_{i}\equiv 1\mod \mathfrak{r}_{i}$ and $\upvarepsilon_{i}\equiv 0\mod \mathfrak{r}_{j}$.   Since 
\[ \mathfrak{r}_{i}\upvarepsilon_{i}\subset \mathfrak{r}_{i}\cap \mathfrak{e}_{i} =\upalpha R,\]
$\mathfrak{r}_{i}\upvarepsilon_{i}m=0$.  And since $(\mathfrak{p}_{i})_{R}^{e_{i}} \subset\mathfrak{r}_{i} =  (\mathfrak{p}^{e_{i}}_{i}) _{R}$, it follows that $\upvarepsilon_{i}m\in M[\mathfrak{p}_{i}^{\infty}]$.  Let $\upmu = (\upmu_{\mathfrak{p}})$ with $\upmu_{\mathfrak{p}_{i}} = \upvarepsilon_{i}m$
and $\upmu_{\mathfrak{p}}=0$ for all other primes.  Then $S( \upmu )=m$ and we conclude that $S$ is an isomorphism.

To finish the proof, we will show that the inclusion $K\hookrightarrow K_{\mathfrak{p}}$ induces an isomorphism
\[ T:M[\mathfrak{p}^{\infty}]\stackrel{\cong}{\longrightarrow} K_{\mathfrak{p}}/ \mathfrak{a}_{R,\mathfrak{p}} .\]  
In what follows, we omit parentheses around the power of a contraction and write simply
\[ \mathfrak{p}_{R}^{e}   := (\mathfrak{p}_{R})^{e}.\]
Let $\overline{\upalpha}\in {\rm Ker}(T)$; if $\upalpha\in K$ is a representative, 
\begin{align}\label{repinclusion} \mathfrak{p}_{R}^{e}\upalpha\subset\mathfrak{a}_{R}\end{align}
for some $e\geq 0$, and being in the kernel, we have $\upalpha\in \mathfrak{a}_{R,\mathfrak{p}}$. 

\vspace{3mm}

\noindent {\it Claim }1.  $\upalpha\in R_{\mathfrak{q}}$ for every prime $\mathfrak{q}\subset A$.  As a consequence, $\upalpha\in R$.

\vspace{3mm}

\noindent {\it Proof of Claim} 1.  When $\mathfrak{q}=\mathfrak{p}$ this follows trivially from $\upalpha\in \mathfrak{a}_{R,\mathfrak{p}}\subset R_{\mathfrak{p}}$.  If $\mathfrak{q}\not=\mathfrak{p}$
then we have 
\[ \upalpha \left(  \mathfrak{p}_{R} \right)_{\mathfrak{q}}^{e}  = \upalpha \left(  \mathfrak{p}_{R}^{e} \right)_{\mathfrak{q}} \subset \mathfrak{a}_{R,\mathfrak{q}}\subset R_{\mathfrak{q}},\]
where 1) the equality above follows from the fact that the sum and product are continuous with respect to completion at $\mathfrak{q}$, and 2) the first inclusion follows from (\ref{repinclusion}). But since $\mathfrak{q}\not=\mathfrak{p}$, 
$\left(  \mathfrak{p}_{R} \right)_{\mathfrak{q}} = R_{\mathfrak{q}}$.  This proves the first part of the claim.  
The proof of the second statement follows from Corollary \ref{CompletionIntersectCoro} of section \S \ref{s:A}:
\begin{align}\label{intersectionwithKislocal} \upalpha\in R_{\mathfrak{q}} \cap K = R_{(\mathfrak{q}_{R} )} \quad \text{for all }\mathfrak{q},\end{align}
hence $\upalpha\in \bigcap R_{(\mathfrak{q}_{R} ) }=R$.   $\lozenge$

\vspace{5mm}

For any prime $\mathfrak{q}\subset A$ and $x\in R$, define \[ {\rm ord}_{\mathfrak{q}_{R}} (x) :=  \max\left\{  n\in \N \cup \{ 0\} \; : \;\;
x R  \subset \mathfrak{q}_{R}^{n}  \right\} .\]
For $\overline{\upalpha}\in {\rm Ker}(T)$,  by {\it Claim} 1, $\upalpha\in R$, hence ${\rm ord}_{\mathfrak{q}_{R}} (\upalpha) $ is well-defined.

\begin{note}\label{NoteOnOrders} We point out that if $\mathfrak{q}\nmid \mathfrak{c}$, then ${\rm ord}_{\mathfrak{q}_{R}} (\upalpha)$ agrees with ${\rm ord}_{\mathfrak{q}} (\upalpha)$,
 is available for any $\upalpha\in K$ and has values that range over $\Z$.  If $\mathfrak{q}|\mathfrak{c}$,  $\mathfrak{q}$ is not invertible and ${\rm ord}_{\mathfrak{q}_{R}} (\upalpha)$  cannot be extended to negative integers and is available only for $\upalpha\in R$.
\end{note}

  Thus we have
\[ \upalpha R \subset \bigcap \mathfrak{q}_{R}^{{\rm ord}_{\mathfrak{q}_{R}} (\upalpha)}.\]
Since $\mathfrak{a}_{R}$ is prime to the conductor $\mathfrak{c}$, it is invertible, and may be written as a product of invertible prime ideals $\mathfrak{q}_{1,R},\dots, \mathfrak{q}_{r,R}$,
each of which is a contraction of a corresponding prime in $A$:
\[  \mathfrak{a}_{R}=  \mathfrak{q}_{1,R}^{e_{1}} \cdots \mathfrak{q}_{r,R}^{E_{A}} = \mathfrak{q}_{1,R}^{e_{1}} \cap \cdots \cap \mathfrak{q}_{r,R}^{E_{A}} . \]
In this case, defining ${\rm ord}_{\mathfrak{q}_{R}} (\mathfrak{a}_{R})$ as we did for $\upalpha$, we have
\[   \mathfrak{a}_{R} =  \bigcap \mathfrak{q}_{R}^{{\rm ord}_{\mathfrak{q}_{R}} (\mathfrak{a}_{R})}.\]
By the choice of $\upalpha$ in the kernel, it follows that \[ {\rm ord}_{\mathfrak{q}_{R}} (\upalpha \mathfrak{p}^{e}_{R} )\geq {\rm ord}_{\mathfrak{q}_{R}}(\mathfrak{a}_{R}).\]  

\vspace{3mm}

\noindent {\it Claim} 2.  For $\mathfrak{q}_{R}\not=\mathfrak{p}_{R}$, ${\rm ord}_{\mathfrak{q}_{R}} (\upalpha \mathfrak{p}^{e}_{R} ) ={\rm ord}_{\mathfrak{q}_{R}} (\upalpha)$.   In particular,
\[ {\rm ord}_{\mathfrak{q}_{R}} (\upalpha ) \geq {\rm ord}_{\mathfrak{q}_{R}} (\mathfrak{a}_{R}) .\] 

\vspace{3mm}

\noindent {\it Proof of Claim} 2.  Suppose the contrary: writing ${\rm ord}_{\mathfrak{q}_{R}} (\upalpha)=n$, that ${\rm ord}_{\mathfrak{q}_{R}} (\upalpha \mathfrak{p}^{e}_{R} )=n+k$ for $k>0$.  Then 
\begin{align}\label{inclusionprimary} \upalpha \mathfrak{p}^{e}_{R} \subset \mathfrak{q}_{R}^{n+k}  .\end{align}
Now the ideal $ \mathfrak{q}_{R}^{n+k}$ is primary, and by the inclusion (\ref{inclusionprimary}), for all $x\in  \mathfrak{p}^{e}_{R}$, $\upalpha x\in \mathfrak{q}_{R}^{n+k}$ implies either $\upalpha\in \mathfrak{q}_{R}^{n+k}$
or $x\in {\rm Rad}( \mathfrak{q}_{R}^{n+k}) = \mathfrak{q}_{R}$, where ${\rm Rad}(\mathfrak{b})$ is the radical of an ideal $\mathfrak{b}$.   But $\upalpha\not\in \mathfrak{q}_{R}^{n+k}$ therefore $x\in\mathfrak{q}_{R}$ i.e.\ $ \mathfrak{p}^{e}_{R}\subset\mathfrak{q}_{R}$.  But
${\rm Rad}(\mathfrak{p}_{R}^{e}) = \mathfrak{p}_{R}\subset {\rm Rad}(\mathfrak{q}_{R}) = \mathfrak{q}_{R}$, which contradicts
$\mathfrak{q}_{R}\not=\mathfrak{p}_{R}$, since $R$ is of dimension 1.  $\lozenge$

\vspace{5mm}

We now treat the case $\mathfrak{q}_{R}=\mathfrak{p}_{R}$.

\vspace{3mm}

\noindent {\it Claim} 3. 
$ {\rm ord}_{\mathfrak{p}_{R}} (\upalpha ) \geq {\rm ord}_{\mathfrak{p}_{R}} (\mathfrak{a}_{R}) .$

\vspace{3mm}

\noindent {\it Proof of Claim }3.   Let $\mathfrak{p}_{R,\mathfrak{p}}$ be the completion of $\mathfrak{p}_{R}$ in $K_{\mathfrak{p}}$.   Note that for all $n$, by continuity,
\begin{align}\label{completionprop} ( \mathfrak{p}_{R,\mathfrak{p}} )^{n} = ( \mathfrak{p}^{n}_{R})_{\mathfrak{p}} \quad \text{and} \quad ( \mathfrak{p}^{n}_{R})_{\mathfrak{p}} \cap R= \mathfrak{p}^{n}_{R}. \end{align}
 By hypothesis, $\upalpha \in \mathfrak{a}_{R,\mathfrak{p}}$.  Therefore 
\[   {\rm ord}_{\mathfrak{p}_{R,\mathfrak{p}} } (\upalpha )\geq   {\rm ord}_{\mathfrak{p}_{R,\mathfrak{p}} } (\mathfrak{a}_{R,\mathfrak{p}}) .\]
By (\ref{completionprop}), 
\[  {\rm ord}_{\mathfrak{p}_{R} } (\upalpha )  =   {\rm ord}_{\mathfrak{p}_{R,\mathfrak{p}} } (\upalpha ), \quad  {\rm ord}_{\mathfrak{p}_{R} } (\mathfrak{a}_{R}) =  {\rm ord}_{\mathfrak{p}_{R,\mathfrak{p}} } (\mathfrak{a}_{R,\mathfrak{p}}) ,\]
which proves the claim. $\lozenge$

\vspace{5mm}

By {\it Claims} 2.\ and 3.,
\[ \upalpha R \subset  \bigcap \mathfrak{q}_{R}^{{\rm ord}_{\mathfrak{q}_{R}} (\mathfrak{a}_{R})}= \mathfrak{a}_{R} ,\]
therefore  $\upalpha\in \mathfrak{a}_{R}$.  Thus $\overline{\upalpha}=0$ in $M[\mathfrak{p}^{\infty}]$. 
This shows that
 $T$ is injective.   

What remains is surjectivity.  Let $\overline{\upbeta}\in K_{\mathfrak{p}}/ \mathfrak{a}_{R,\mathfrak{p}} $
be represented by $\upbeta\in K_{\mathfrak{p}}$.   Suppose  first that $\mathfrak{p}\nmid \mathfrak{c}$.  By Proposition \ref{WhenLocalEqual} of section \S \ref{s:A}, the localizations $A_{(\mathfrak{p})}$ and $R_{(\mathfrak{p}_{R})}$ are  equal, thus
\[  A_{ \mathfrak{p}} = R_{ \mathfrak{p}} \quad \text{and}\quad \mathfrak{a}_{R,\mathfrak{p}} =  \mathfrak{a}_{ \mathfrak{p}} . \]
We now apply weak approximation to the Dedekind domain $A$, in which modular constraints are enforced with respect to the ideals dividing either $\mathfrak{a}$ or $\mathfrak{c}$:  that is,
we deduce the existence of $\upalpha\in K$ with
the following properties: 

 \vspace{3mm}
 
\begin{enumerate}
\item[\ding{202}] $ {\rm ord}_{\mathfrak{p}} (\upalpha - \upbeta)
 \geq {\rm ord}_{\mathfrak{p}} (\mathfrak{a}) $.  
 
 \vspace{2mm}
 
 \noindent We emphasize that here,  $ {\rm ord}_{\mathfrak{p}} $ is the order calculated with respect to the $A$-ideal $\mathfrak{p}$. However, since $\mathfrak{p}\nmid \mathfrak{c}$, this inequality implies a corresponding inequality for $ {\rm ord}_{\mathfrak{p}_{R}} $:
 \[ {\rm ord}_{\mathfrak{p}_{R}} (\upalpha - \upbeta) \geq {\rm ord}_{\mathfrak{p}_{R}} (\mathfrak{a}_{R}).\]
 Here, we note that since $\upalpha - \upbeta\in K_{\mathfrak{p}}$, ${\rm ord}_{\mathfrak{p}_{R}} (\upalpha - \upbeta)$ is defined as the order with respect to the completion of $\mathfrak{p}_{R}$, which is equal to the completion of $\mathfrak{p}$.  That is, $ {\rm ord}_{\mathfrak{p}_{R}} (\upalpha - \upbeta)=
  {\rm ord}_{\mathfrak{p}} (\upalpha - \upbeta)$ and ${\rm ord}_{\mathfrak{p}_{R}} (\mathfrak{a}_{R}) ={\rm ord}_{\mathfrak{p}} (\mathfrak{a})$.
  
   \vspace{3mm}

 \item[\ding{203}] 
 ${\rm ord}_{\mathfrak{q}} (\upalpha ) \geq {\rm ord}_{\mathfrak{q}} (\mathfrak{a})$ if $\mathfrak{q}|\mathfrak{a}$ and $\mathfrak{q}\not=\mathfrak{p}$.   
 
 \vspace{2mm}

 \noindent Since, for such $\mathfrak{q}$, $\mathfrak{q}\nmid \mathfrak{c}$, this is again equivalent to
 \[{\rm ord}_{\mathfrak{q}_{R}} (\upalpha ) \geq {\rm ord}_{\mathfrak{q}_{R}} (\mathfrak{a}_{R}),\]
 where we are using the observation in {\it Note} \ref{NoteOnOrders} that  ${\rm ord}_{\mathfrak{q}_{R}} (\upalpha ) $ is defined for any $\upalpha\in K$.
 In particular, for such primes,  $\upalpha\in \mathfrak{a}_{R,\mathfrak{q}}$.
 
  \vspace{3mm}
 
  \item[\ding{204}]  ${\rm ord}_{\mathfrak{q}} (\upalpha ) \geq {\rm ord}_{\mathfrak{q}} (\mathfrak{c})$ for all other primes.  
  
   \vspace{2mm}

 \noindent  For such primes, this inequality implies
  \[  \upalpha \in \mathfrak{c}_{\mathfrak{q}} \subset R_{\mathfrak{q}}.\]
However, we claim that $R_{\mathfrak{q}}=\mathfrak{a}_{R,\mathfrak{q}}$.  Indeed, note that since $\mathfrak{a}_{R}$ is prime to $\mathfrak{c}$, the contraction $\mathfrak{a}\cap R$ is $\mathfrak{a}_{R}$, and the contraction $\mathfrak{a}_{\mathfrak{q}} \cap R_{\mathfrak{q}}$ contains {\it a priori} $\mathfrak{a}_{R,\mathfrak{q}}$ =  the $\mathfrak{q}$-completion of $\mathfrak{a}_{R}=\mathfrak{a}\cap R$.  As both $\mathfrak{a}_{\mathfrak{q}}$
 and $R_{\mathfrak{q}}$ are open in $K_{\mathfrak{q}}$, any point in the intersection may be approximated by a limit of points in the dense subset $\mathfrak{a}_{R}$, hence the contraction $ \mathfrak{a}_{\mathfrak{q}} \cap R_{\mathfrak{q}}$ is equal to $\mathfrak{a}_{R,q}$.  But since
 $\mathfrak{q}\nmid \mathfrak{a}$,  $\mathfrak{a}_{\mathfrak{q}} = A_{\mathfrak{q}}$, thus $\mathfrak{a}_{R,q}=\mathfrak{a}_{\mathfrak{q}} \cap R_{\mathfrak{q}}= A_{\mathfrak{q}}\cap R_{\mathfrak{q}}=R_{\mathfrak{q}} $.  As a result, in this case we have
 \[ \upalpha\in\mathfrak{a}_{R,\mathfrak{q}}.\]

 \end{enumerate}
The inequality in \ding{202} implies that $\upalpha-\upbeta\in\mathfrak{a}_{R,\mathfrak{p}}$, hence
$T(\overline{\upalpha}) = \overline{\upbeta}$. If $e$ is a non-negative integer satisfying 
\[ e\geq  {\rm ord}_{\mathfrak{p}} (\mathfrak{a})-{\rm ord}_{\mathfrak{p}} (\upalpha)\] then since $\mathfrak{p}\nmid\mathfrak{c}$, ${\rm ord}_{\mathfrak{p}_{R} }={\rm ord}_{\mathfrak{p}}$
takes products to sums and
\[ {\rm ord}_{\mathfrak{p}_{R} }(\mathfrak{p}^{e}_{R} ) +  {\rm ord}_{\mathfrak{p}_{R} }(\upalpha) ={\rm ord}_{\mathfrak{p}_{R} }(\mathfrak{p}^{e}_{R} \upalpha) \geq {\rm ord}_{\mathfrak{p}_{R}} (\mathfrak{a}_{R}) \] which implies that
\[  \mathfrak{p}^{e}_{R} \upalpha \subset  \mathfrak{a}_{R,\mathfrak{p}}.\]
 For the primes occurring in \ding{203} the same is true:
\[ {\rm ord}_{\mathfrak{q}_{R}} (\mathfrak{p}^{e}_{R} \upalpha) \geq   {\rm ord}_{\mathfrak{q}_{R}}(\upalpha )\geq  {\rm ord}_{\mathfrak{q}_{R}} (\mathfrak{a}_{R})\] and 
\[  \mathfrak{p}^{e}_{R} \upalpha \subset  \mathfrak{a}_{R,\mathfrak{q}}.\]
 Finally, for the primes in \ding{204}, we have trivially 
 \[  \mathfrak{p}^{e}_{R} \upalpha \subset  \mathfrak{p}^{e}_{R} \mathfrak{c}_{\mathfrak{q}}\subset \mathfrak{c}_{\mathfrak{q}}\subset R_{\mathfrak{q}}= \mathfrak{a}_{R,\mathfrak{q}} .\]
 It follows that
 \[  \mathfrak{p}^{e}_{R} \upalpha \subset \bigcap \left(   \mathfrak{a}_{R,\mathfrak{q}}\cap K  \right) =\mathfrak{a}_{R},\]
i.e., $\overline{\upalpha}\in M[\mathfrak{p}^{\infty}]$.  This proves surjectivity when $\mathfrak{p}\nmid \mathfrak{c}$.  

Lastly, we consider the case $\mathfrak{p}|\mathfrak{c}$.  In this case
we have, for some $n\geq 1$, that
\[ \mathfrak{c}_{\mathfrak{p}}=\mathfrak{p}_{\mathfrak{p}}^{n}\subset R_{\mathfrak{p}} .\]
We point out here that $\mathfrak{p}^{n}\not=\mathfrak{p}^{n}_{R}$: in the above, we are taking the completion of the power of the prime
ideal $\mathfrak{p}\subset A$
using the fact that $A_{\mathfrak{p}}$ is a discrete valuation ring, hence every ideal in $A_{\mathfrak{p}}$ is a power of $\mathfrak{p}_{\mathfrak{p}}$.
We then replace condition \ding{202}  by 
\[ {\rm ord}_{\mathfrak{p}} (\upalpha - \upbeta)
 \geq n = {\rm ord}_{\mathfrak{p}} (\mathfrak{c}). \]
 The latter implies that
 \[ \upalpha \equiv \upbeta \mod R_{\mathfrak{p}}.\]
 However, by an argument identical to that found in \ding{204} above, $ R_{\mathfrak{p}}= \mathfrak{a}_{R,\mathfrak{p}}$.  Thus
 $T(\overline{\upalpha})=\overline{\upbeta}$.
 The remaining modular constraints are imposed to guarantee that $\overline{\upalpha}$ is in $M[\mathfrak{p}^{\infty}]$.  
 Conditions \ding{203}  and \ding{204} are identical to those above.
Choose $e$ non-negative so that 
\[   \mathfrak{p}^{e}\upalpha \subset \mathfrak{p}^{m}_{\mathfrak{p}} = \mathfrak{c}_{\mathfrak{p}}  . \]
Since $ \mathfrak{p}_{R}^{e}\upalpha\subset \mathfrak{p}^{e}\upalpha $ and $\mathfrak{c}_{\mathfrak{p}}  \subset R_{\mathfrak{p}}=\mathfrak{a}_{R,\mathfrak{p}}$,
 \[ \mathfrak{p}_{R}^{e}\upalpha\subset \mathfrak{a}_{R,\mathfrak{p}}.\]
 Along with conditions  \ding{203}  and \ding{204} we deduce as before that $\mathfrak{p}^{e}_{R} \upalpha \subset \mathfrak{a}_{R}$ and thus $\overline{\upalpha}\in M[\mathfrak{p}^{\infty}]$.
Thus
  $T$ is surjective in this case as well. This completes the proof of the Theorem.
\end{proof}

Recall the ideal group $\I^{R}<\I_{K}$, defined in equation (\ref{DefIUpperR}) at the beginning of this section. For every $x=(x_{v})\in\mathbb{I}^{R}$ the $R$-ideal generated by $x$ is defined \[(x):=\prod_{\gotp\subset A,\; \gotp\nmid \mathfrak{c}}\gotp_{R}^{{\rm ord}_{\gotp}(x_{\gotp})} \in {\sf I}^{\mathfrak{c}}_{R}.\]
Moreover, for every $\gota_{R}\subset R$, an ideal prime to $\mathfrak{c}$, we denote \[x\gota_{R} :=(x)\gota_{R} .\]
Note that \[(x\gota_{R})_{\gotp}=(x)\gota_{R} R_{\gotp}=x_{\gotp}\gota_{R} R_{\gotp}=x_{\gotp}\gota_{R,\gotp}.\]
Theorem \ref{AfModIso} now gives the following isomorphism of $R$-modules:\[K/x\gota_{R} \cong \bigoplus_{\gotp}K_{\gotp}/x_{\gotp}\gota_{R,\gotp}.\]
We then define, given $x\in\mathbb{I}^{R}$, a homomorphism $x:K/\gota_{R}\rightarrow K/x\gota_{R} $  so that the following diagram commutes
\begin{diagram}
K/\gota_{R}&\rTo^{x}&K/x\gota_{R}   \\
\dTo^{\simeq}& &\dTo_{\simeq} \\  \bigoplus_{\gotp}K_{\gotp}/\gota_{R,\gotp}&\rTo^{x}&\bigoplus_{\gotp}K_{\gotp}/x_{\gotp}\gota_{R,\gotp}  \\
\end{diagram}
where the bottom horizontal arrow is defined
\[(t_{\gotp})\mapsto (x_{\gotp}t_{\gotp}).\]

Recall that for an ideal $\mathfrak{n}_{R}\subset R$,  
\[ \mathfrak{n}^{\ast}_{R} := \{ \upbeta \in K\; :\;\; \upbeta\mathfrak{n}_{R}\subset R\} .\]
Thus $\mathfrak{n}_{R}^{\ast}\mathfrak{n}_{R} \subset R$ and if $\mathfrak{n}_{R}$ is invertible, $\mathfrak{n}_{R}^{\ast}=\mathfrak{n}_{R}^{-1}$.

\begin{lemm}\label{starunionlem} Let $\mathfrak{a}_{R} \in {\sf I }_{R}^{\mathfrak{c}}$.  Then
\[ K =  \bigcup_{\mathfrak{m}\subset A}  (\mathfrak{m}_{R} \mathfrak{a}_{R}^{-1} )^{\ast}
\]
where the $\mathfrak{m}_{R}$ vary over all contractions $\mathfrak{m}_{R}=\mathfrak{m}\cap R$ of integral ideals $\mathfrak{m}\subset A$.
\end{lemm}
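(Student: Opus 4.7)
The plan is to reduce the statement to the claim that for each $x\in K$ there exists an integral $A$-ideal $\mathfrak{m}$ with $x\mathfrak{m}_R\subset\mathfrak{a}_R$. This reduction is routine: since $\mathfrak{a}_R\in\mathsf{I}_R^{\mathfrak{c}}$ is invertible, the membership $x\in(\mathfrak{m}_R\mathfrak{a}_R^{-1})^{\ast}$ unwinds as $x\mathfrak{m}_R\mathfrak{a}_R^{-1}\subset R$, which after multiplication by $\mathfrak{a}_R$ is equivalent to $x\mathfrak{m}_R\subset\mathfrak{a}_R$.

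The key observation I will use is that the conductor functions as a ``bridge'' between $A$-ideals and $R$-ideals sitting inside $\mathfrak{a}_R$. More precisely, writing $\mathfrak{a}=\mathfrak{a}_R A$ for the $A$-expansion, the $A$-ideal $\mathfrak{a}\mathfrak{c}$ is contained in $\mathfrak{a}_R$. Indeed, since $\mathfrak{c}$ is an $A$-ideal with $\mathfrak{c}\subset R$, one has $\mathfrak{a}\mathfrak{c}\subset A\mathfrak{c}=\mathfrak{c}\subset R$ and simultaneously $\mathfrak{a}\mathfrak{c}\subset\mathfrak{a}$; by Theorem \ref{bijectionprimetoc} (which gives $\mathfrak{a}\cap R=\mathfrak{a}_R$ because $\mathfrak{a}_R$ is prime to $\mathfrak{c}$), we conclude $\mathfrak{a}\mathfrak{c}\subset\mathfrak{a}\cap R=\mathfrak{a}_R$.

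With this in hand the construction of $\mathfrak{m}$ is immediate. Given $x\in K$, write $x=p/q$ with $p,q\in A$, $q\neq 0$, and take $\mathfrak{m}:=q\mathfrak{a}\mathfrak{c}\subset A$. Then
\[ x\mathfrak{m}=p\mathfrak{a}\mathfrak{c}\subset\mathfrak{a}\mathfrak{c}\subset\mathfrak{a}_R, \]
where the first inclusion uses that $\mathfrak{a}\mathfrak{c}$ is an $A$-ideal and $p\in A$, and the second is the key observation above. Since $\mathfrak{m}_R=\mathfrak{m}\cap R\subset\mathfrak{m}$, this chain yields $x\mathfrak{m}_R\subset\mathfrak{a}_R$, giving $x\in(\mathfrak{m}_R\mathfrak{a}_R^{-1})^{\ast}$, as required.

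I do not anticipate a serious obstacle: the whole argument turns on the single identification $\mathfrak{a}\mathfrak{c}\subset\mathfrak{a}_R$, after which the proof is formal. The case $x=0$ is trivial (any $\mathfrak{m}$ works), and the reduction in the first paragraph relies only on the invertibility of $\mathfrak{a}_R$, which is guaranteed by $\mathfrak{a}_R\in\mathsf{I}_R^{\mathfrak{c}}$ via Theorem \ref{inverttheo}.
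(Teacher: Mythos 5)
Your argument is correct and is essentially the paper's proof: since $\mathfrak{c}$ is an $A$-ideal, your $\mathfrak{m}=q\mathfrak{a}\mathfrak{c}$ equals $q\mathfrak{c}\mathfrak{a}_{R}$, which is exactly the paper's choice $\mathfrak{m}=y\mathfrak{c}\mathfrak{a}_{R}$ with the denominator of $x$ taken in $A$ rather than in $R$. The only point to add is the paper's opening reduction to $\mathfrak{a}_{R}\subset R$ (by rescaling with an element of $\mathfrak{a}_{R}^{-1}$), since ${\sf I}_{R}^{\mathfrak{c}}$ contains genuinely fractional ideals, for which your inclusion $\mathfrak{a}\mathfrak{c}\subset A\mathfrak{c}$, the appeal to Theorem \ref{bijectionprimetoc}, and the integrality of $\mathfrak{m}=q\mathfrak{a}\mathfrak{c}$ all presuppose $\mathfrak{a}\subset A$.
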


\begin{proof} It is enough to assume $\mathfrak{a}_{R}\subset R$; as usual, $\mathfrak{a}\subset A$ is the extension of $\mathfrak{a}_{R}$ to $A$. In this case given $\upalpha =x/y\in K$, $x,y\in R$ let
\[  \mathfrak{m} = y \mathfrak{c} \mathfrak{a}_{R},\]
which is an $A$-ideal due to the factor $\mathfrak{c}$.
Then $\mathfrak{m}\subset y\mathfrak{a}_{R} \subset R$ hence $\mathfrak{m}_{R}=\mathfrak{m}$. 
It follows that
$ \upalpha \in   (\mathfrak{m}_{R} \mathfrak{a}_{R}^{-1} )^{\ast}$, since
\[ \frac{x}{y}  \mathfrak{m}_{R} \mathfrak{a}_{R}^{-1}  \subset \frac{x}{y} y\mathfrak{a}_{R}\mathfrak{a}_{R}^{-1} \in R. \]
\end{proof}


In what follows, we denote by $K^{\rm ab}_{{\infty}}$ the maximal abelian extension completely split at $\infty$.

\begin{theo}[Main Theorem of ``Complex Multiplication'' over Orders]\label{Sh}
Let \[\mathbb{D}=(\C_{\infty},\uprho)\]be a rank $\text{\rm 1}$ Drinfeld $R$-module. Let $\upsigma\in {\rm Aut}(\C_{\infty}/K)$ be such that $\upsigma|_{K^{\rm ab}_{{\infty}}}$ is in
the image of the reciprocity map and
 let $s\in\mathbb{I}^{R}$ be such that\[[s,K]=\upsigma|_{K^{\rm ab}_{{\infty}}}.\]Let $\gota_{R}$ be an ideal of $R$ 
 prime to $\mathfrak{c}$. Given an analytic isomorphism of $R$-modules
\[\Upsilon:\C_{\infty}/\gota_{R}\longrightarrow \mathbb{D},\] 
there exists an analytic isomorphism of $R$-modules
\[\Upsilon':\C_{\infty}/s^{-1}\gota_{R}\longrightarrow \mathbb{D}^{\upsigma},\]
whose restriction to $K_{\infty}$ is unique, 
such that the following diagram commutes:\[\begin{diagram}K/\gota_{R}&\rTo^{s^{-1}}&K/s^{-1}\gota_{R}\\\dTo^{\Upsilon}& &\dTo_{\Upsilon'}\\\mathbb{D}&\rTo^{\upsigma}&\mathbb{D}^{\upsigma}\end{diagram}\]
\end{theo}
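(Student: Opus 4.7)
The plan is to construct $\Upsilon'$ by first pinning it down on the torsion subgroup $K/s^{-1}\gota_R$ of $\C_\infty/s^{-1}\gota_R$ via the forced commutativity of the square, and then to extend it analytically. The tentative definition is
\[
\Upsilon'\!\left(s^{-1}x \bmod s^{-1}\gota_R\right) := \upsigma\bigl(\Upsilon(x \bmod \gota_R)\bigr), \qquad x \in K.
\]
By Theorem \ref{AfModIso} and Lemma \ref{starunionlem}, the source $K/\gota_R$ decomposes as the union of the finite pieces $(\mathfrak{m}_R\gota_R^{-1})^\ast/\gota_R$ as $\mathfrak{m}$ ranges over $A$-ideals, and under $\Upsilon$ these are carried onto the finite torsion modules $\D[\mathfrak{m}_R]$ (Note \ref{TorsionNote}). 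So the task reduces to verifying that $\upsigma$ sends $\D[\mathfrak{m}_R]$ into $\D^\upsigma[\mathfrak{m}_R]$ in the manner prescribed by multiplication by $s^{-1}$ on the lattice, and that the resulting map assembles into an $R$-linear isomorphism $K/\gota_R \to K/s^{-1}\gota_R$ that extends to an exponential map for the lattice $s^{-1}\gota_R$.

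The second step is a reduction. Using the multiplicativity of the Artin reciprocity map and the fact (\ref{RIdeles}) that every idèle class has a representative in $\I^R$, one decomposes the compatibility statement prime by prime. It therefore suffices to treat the case in which $s = \uppi_\mathfrak{p}$ is a uniformizer at a single prime $\mathfrak{p}\subset A$ with $\mathfrak{p}\nmid\mathfrak{c}$ (so that $\mathfrak{p}_R$ is invertible by Corollary \ref{PrimeToConductor}). Here (\ref{piandFrob}) identifies $\upsigma|_{K^{\rm ab}_\infty}$ with the Frobenius $\upsigma_\mathfrak{p}$, and equation (\ref{HayesThm}) gives the critical identification of Hayes modules
\[
\D^{\upsigma_\mathfrak{p}} = \mathfrak{p}_R \ast \D,
\]
together with the canonical isogeny $\uprho_{\mathfrak{p}_R}: \D \to \mathfrak{p}_R \ast \D$. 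Composing $\uprho_{\mathfrak{p}_R}$ with the analytic uniformization of $\D$ supplies a candidate analytic isomorphism $\C_\infty/\uppi_\mathfrak{p}^{-1}\gota_R \to \D^{\upsigma_\mathfrak{p}}$, which is our candidate for $\Upsilon'$.

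The crux is then to verify that this candidate matches $\upsigma_\mathfrak{p}$ on torsion. For a modulus $\mathfrak{m}_R$ coprime to $\mathfrak{p}$, reduce modulo a prime $\mathfrak{P}\subset\mathcal{O}_{H_R^1}$ above $\mathfrak{p}$: by Note \ref{RedFrob}, $\widetilde{\uprho}_{\mathfrak{p}_R}$ coincides with $\uptau^{\deg(\mathfrak{p}_R)}$ on $\widetilde\D$, which is precisely the action of $\upsigma_\mathfrak{p}$ on residues. Since the reduction map on $\mathfrak{m}_R$-torsion is injective by Lemma \ref{injtor} (applicable because $\mathfrak{P}\nmid\mathfrak{m}_R$), this equality of reductions lifts to an equality in characteristic $0$: $\upsigma_\mathfrak{p}$ agrees on $\D[\mathfrak{m}_R]$ with $\uprho_{\mathfrak{p}_R}$ composed with the uniformization, which is exactly the prescription above. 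For $\mathfrak{p}$-primary torsion one handles the remaining compatibility by an analogous local argument using the exactness of the sequence on lattices $\gota_R \hookrightarrow \uppi_\mathfrak{p}^{-1}\gota_R \twoheadrightarrow \gota_R/\uppi_\mathfrak{p}\gota_R$ and the fact that $\upsigma_\mathfrak{p}$ acts trivially on $\D[\mathfrak{p}_R]/$generators of the cyclic module coming from $\mathfrak{p}_R$.

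Having established the required diagram on all torsion, the final step is analytic extension. The torsion $K/s^{-1}\gota_R$ is dense in $K_\infty/s^{-1}\gota_R$ under the $\infty$-adic topology, and $\upsigma$ preserves $K_\infty$ (it fixes $K$ and commutes with the $\infty$-adic completion structure on $K_\infty^{\rm sep}$). Hence the map defined on the torsion admits a unique continuous $R$-linear extension to $K_\infty/s^{-1}\gota_R$, which gives the asserted uniqueness of $\Upsilon'|_{K_\infty}$; beyond $K_\infty$ one extends via the exponential for $s^{-1}\gota_R$, which is defined on all of $\C_\infty$. The main obstacle is the torsion compatibility in the reduction step: the interplay between the contraction/expansion of ideals across $R \subset A$, and the fact that $R$-torsion of $\D$ is cyclic only for moduli prime to $\mathfrak{c}$ (Proposition \ref{Rank1Prop}), forces the reduction-to-prime-ideals argument to route carefully through invertibility hypotheses, which is why the hypothesis $\gota_R \in {\sf I}_R^{\mathfrak{c}}$ and the invertibility assumption on all Drinfeld modules are essential.
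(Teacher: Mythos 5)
Your proposal assembles several of the right ingredients (the isogeny $\uprho_{\mathfrak{p}_{R}}:\D\rightarrow\mathfrak{p}_{R}\ast\D=\D^{\upsigma_{\mathfrak{p}}}$ from (\ref{HayesThm}), reduction mod $\mathfrak{P}$ together with {\it Note} \ref{RedFrob} and Lemma \ref{injtor}, and the exhaustion of $K/\gota_{R}$ by the torsion pieces of Lemma \ref{starunionlem}), but the overall architecture has a genuine gap: the claimed reduction ``prime by prime'' to the case $s=\uppi_{\mathfrak{p}}$ is not valid. The map $s^{-1}:K/\gota_{R}\rightarrow K/s^{-1}\gota_{R}$ depends, through the local factors of Theorem \ref{AfModIso}, on the unit components $s_{\mathfrak{q}}\in R_{\mathfrak{q}}^{\times}$ at \emph{every} prime, not only on the finitely many primes where $s$ has nonzero valuation; unit id\`{e}les act trivially on $H_{R}$ but not on torsion points, so they cannot be discarded, and a general $s$ admits no finite factorization into uniformizer id\`{e}les modulo $K^{\times}$. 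Moreover, even where such a factorization of $[s,K]$ exists on $K^{\rm ab}_{\infty}$, the extensions of the individual Frobenii to $\C_{\infty}$ do not compose to the given $\upsigma$. The actual mechanism (the one the paper uses) is: fix the modulus $\mathfrak{m}$, take a finite Galois $L$ containing $H^{1}_{R}$, $\D[\mathfrak{m}_{R}]$ and $K_{\mathfrak{m}_{R}}$, and invoke the Chebotarev Density Theorem to produce a prime $\mathfrak{p}\nmid\mathfrak{c},\mathfrak{m}$, unramified in $L$, with $\upsigma|_{L}=\upsigma_{\mathfrak{p}}$; then $s\uppi^{-1}=\upalpha u$ with $\upalpha\in K^{\times}$ and $u\in\I_{\mathfrak{m}_{R}}$, and it is precisely the congruence $u_{\mathfrak{q}}\equiv 1\bmod \mathfrak{m}_{R,\mathfrak{q}}$ that kills the unit contribution on the $\mathfrak{m}_{R}$-torsion ($u_{\mathfrak{q}}t-t\in\gota_{R,\mathfrak{q}}$), while the factor $\upalpha^{-1}$ supplies the missing isomorphism $\C_{\infty}/\mathfrak{p}_{R}^{-1}\gota_{R}\rightarrow\C_{\infty}/s^{-1}\gota_{R}$. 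None of this appears in your argument, and without it the commutativity of the square on torsion cannot be verified for a general $s\in\I^{R}$.

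Two further consequences of the faulty reduction: first, because $\mathfrak{p}$ is chosen per modulus with $\mathfrak{p}\nmid\mathfrak{m}$, the $\mathfrak{p}$-primary torsion case you try to treat separately never arises in the correct argument, and your sketch for it cannot work as stated since Lemma \ref{injtor} is unavailable when $\mathfrak{P}\mid\mathfrak{m}_{R}$; second, since different moduli use different primes, one must check that the resulting maps $\Upsilon'_{\mathfrak{m}}$ agree, which is done by observing that $\Upsilon'_{\mathfrak{n}}\circ{\Upsilon'_{\mathfrak{m}}}^{-1}\in{\rm Aut}(\D^{\upsigma})=\F_{q}^{\times}$ fixes $\D^{\upsigma}[\mathfrak{m}_{R}]$ pointwise and hence equals $1$ -- a step absent from your proposal. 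Finally, your justification of uniqueness is off: an automorphism $\upsigma\in{\rm Aut}(\C_{\infty}/K)$ is in general discontinuous and does \emph{not} preserve $K_{\infty}$; the uniqueness of $\Upsilon'|_{K_{\infty}}$ comes from the continuity of the analytic map $\Upsilon'$ together with the density of $K/s^{-1}\gota_{R}$ in $K_{\infty}/s^{-1}\gota_{R}$, not from any continuity property of $\upsigma$.
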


\begin{proof}  

We begin with a standard observation: if $(\mathbb{D}_{1},\Upsilon_{1})$ is another rank 1 Drinfeld $R$-module isomorphic to $\mathbb{D}$, with $\Upsilon_{1}:\C_{\infty}/\mathfrak{a}_{1,R}\rightarrow \D_{1}$ an analytic isomorphism, then if the Theorem holds for $(\mathbb{D}_{1},\Upsilon_{1})$, it holds also for $(\mathbb{D},\Upsilon)$. 
The proof of this is straight-forward, formally identical to that appearing, for example, on pages 160-161 of \cite{Sil}.
This allows us to assume that 
\begin{enumerate}
\item[1.] $\D$ is a Hayes module i.e.\ it is sign normalized and its coefficients belong to $H_{R}^{1}$ = the narrow Hilbert class field of $R$.  See section \S \ref{s:C}.
\item[2.] $\gota_{R}\subset R$.
\end{enumerate}

Fix $\mathfrak{m}_{R}\subset R$ an ideal which is the contraction of $\mathfrak{m}\subset A$.  The first step will be to show the existence of a commutative diagram 
\begin{align}\label{1step} \begin{diagram}(\gotm_{R}\gota^{-1}_{R})^{\ast}/\gota_{R}&\rTo^{s^{-1}}&(\gotm_{R}s \gota^{-1}_{R})^{\ast}/s^{-1}\gota_{R}\\\dTo^{\Upsilon}& 
&\dTo_{\Upsilon'_{\gotm}}\\\mathbb{D}&\rTo^{\quad\;\;\upsigma}&\mathbb{D}^{\upsigma}\end{diagram}
\end{align}
where $\Upsilon'_{\mathfrak{m}}$ is the restriction of an analytical isomorphism $\C_{\infty}/s^{-1}\mathfrak{a}_{R}\rightarrow \D^{\upsigma}$. 
Let $L\subset K^{\rm ab}_{\infty}$ be a finite Galois extension of $K$ containing
 \begin{itemize}
\item the narrow Hilbert class field $ H^{1}_{R}$: so that $\D$ is defined over $L$.
\item the torsion submodule
 $\D [\mathfrak{m}_{R}]$. 
 \item the narrow ray class field
$K_{\mathfrak{m}_{R}}$.
 \end{itemize}
Let $\mathcal{O}_{L}\supset R$ be the integral closure of $R$ in $L$: it is equal to the integral closure of $A$ in $L$, hence is Dedekind.  
Choose a prime $\mathfrak{P}\subset \mathcal{O}_{L}$ with the following properties: if $\mathfrak{p}=\mathfrak{P}\cap A$, then
   \begin{itemize}
 \item[\ding{192}] $L$ is unramified at $\mathfrak{p}$.
 \item[\ding{193}] $\upsigma|_{L} =[s,K]|_{L}=\upsigma_{\mathfrak{p}}=(\mathfrak{p}, L/K)$.
 \item[\ding{194}] $\mathfrak{p}\nmid  \mathfrak{c}, \mathfrak{m}$.
 \end{itemize}
 As $\D$ is a Hayes module, the leading coefficients of its additive polynomials belong to the constant field of $H^{1}_{R}$, hence it has good reduction at $\mathfrak{P}$.
 The existence of such a $\mathfrak{P}$ satisfying \ding{192} --  \ding{194}  is guaranteed by the Chebotarev Density Theorem.

Let $\uppi\in \mathbb{I}_{K}$ be such that it has a uniformizer at the $\gotp$ component and is $1$ everywhere else. Then by (\ref{piandFrob})
\[ [\uppi,K]=(\gotp, L/K),\] 
hence $[s\uppi^{-1},K]$ acts trivially on $K_{\gotm_{R}}\subset L$.  
Recall from the beginning of this section that $K_{\gotm_{R}}$ is indexed via Class Field Theory by the subgroup 
\[ C_{\mathfrak{m}_{R}}= (\I_{\mathfrak{m}_{R}} \cdot K^{\times}) /K^{\times} < C_{K}.\]
Thus the reciprocity map induces an isomorphism
\begin{align}\label{reciprocitymapforrcf} [\cdot, K]: \I_{K}/ \big( \I_{\mathfrak{m}_{R}}\cdot K^{\times}\big) \stackrel{\cong}{\longrightarrow} {\rm Gal}(K_{\mathfrak{m}_{R}}/K) . \end{align}
Therefore,
\[s\uppi^{-1}=\upalpha u\]
where $\upalpha\in K^{\times}$ and $u\in \I_{\mathfrak{m}_{R}}$.

By the hypothesis $\mathfrak{p}\nmid \mathfrak{c}$, we have $\mathfrak{p}_{R} \nmid \mathfrak{c}$, which implies $\mathfrak{p}_{R} $
defines a class of  the narrow class group ${\sf Cl}^{1}_{R}$.
Hence the isogeny
\[\uprho_{\gotp_{R}}: \D\longrightarrow \gotp_{R}\ast \D=\D^{\upsigma}\]
is defined,
where the equality on the right hand side comes from (\ref{HayesThm}). 
We recall by {\it Note} \ref{RedFrob} that its reduction mod $\mathfrak{P}$ satisfies $\widetilde{\uprho_{\gotp_{R}}}=\tau^{\deg(\gotp_{R})}$.  
The key observation in the proof of the Main Theorem is that we can replace the {\it discontinuous
automorphism} $\upsigma$ by the {\it analytic endomorphism} $\uprho_{\mathfrak{p}_{R}}$ if we restrict to $\D[\mathfrak{m}_{R}]$: that is, 
we claim that the following diagram commutes
\begin{equation}\label{diagram1}\begin{diagram}\D[\gotm_{R}]&\rTo^{\uprho_{\gotp_{R}}}&\D^{\upsigma}[\gotm_{R}]\\\dInto& &\dInto\\\mathbb{D}&\rTo^{\upsigma}&\mathbb{D}^{\upsigma}\end{diagram},\end{equation}
where the vertical arrows are the inclusions.
Indeed, for every torsion point $t\in \D[\gotm_{R}]$, we have the equalities mod $\mathfrak{P}$
\[\widetilde{\uprho_{\gotp_{R}}(t)}=\widetilde{\uprho_{\gotp_{R}}}(\, \widetilde{t}\,)= \widetilde{t}^{\;\widetilde{\upsigma}}= \widetilde{\;t^{\upsigma}}\] 
since $\widetilde{\uprho_{\gotp_{R}}}=\tau^{\deg(\gotp_{R})}=\widetilde{\upsigma}$. 
As $\mathfrak{P}\nmid\mathfrak{m}_{R}$, by Lemma \ref{injtor}, the reduction map 
\[ \D^{\upsigma}[\mathfrak{m}_{R}]\rightarrow\widetilde{ \D^{\upsigma}}[\mathfrak{m}_{R}]\]
is injective, hence
\[  \uprho_{\gotp_{R}}(t) = t^{\upsigma}. \]

By hypothesis \ding{194} above, $\mathfrak{p}\nmid \mathfrak{c}$, so $\mathfrak{p}_{R}$ is invertible. We now fix\[\Upsilon'':\C_{\infty}/\gotp_{R}^{-1}\gota_{R}\longrightarrow \D^{\upsigma}\]an analytic isomorphism such that the following diagram commutes:
\[\begin{diagram}\C_{\infty}/\gota_{R}&\rTo^{\sf can}&\C_{\infty}/\gotp_{R}^{-1}\gota_{R}\\\dTo^{\Upsilon}& &\dTo_{\Upsilon''}\\\mathbb{D}&\rTo_{\uprho_{\gotp_{R}}}&\mathbb{D}^{\upsigma}.\end{diagram}\]
where {\small ${\sf can}$} is the canonical map $x + \gota_{R}\mapsto x+\gotp_{R}^{-1}\gota_{R}$.
The existence of $\Upsilon''$ is a consequence of the fact that the kernel of the additive homomorphism $\uprho_{\mathfrak{p}_{R}}$ is $\D [\mathfrak{p}_{R}]$, whose pre-image 
by the analytical $R$-module isomorphism $\Upsilon$ is exactly $\mathfrak{p}_{R}^{-1} \mathfrak{a}_{R}$.

Using the decomposition $s=\upalpha\uppi u$, 
we have \[(s)=(\upalpha)(\uppi)=(\upalpha)\gotp_{R}.\]Therefore $s^{-1}\gota_{R}=(s^{-1})\gota_{R}=\upalpha^{-1}\gotp_{R}^{-1}\gota_{R}$, and 
 multiplication by $\upalpha^{-1}$ gives the following 
isomorphism:\[\upalpha^{-1}:\C_{\infty}/\gotp_{R}^{-1}\gota_{R}\longrightarrow \C_{\infty}/s^{-1}\gota_{R}.\]We can then
form the following 
diagram:\begin{equation}\label{diagram2}\begin{diagram}\C_{\infty}/\gota_{R}&\rTo^{\sf can}&\C_{\infty}/\gotp_{R}^{-1}\gota_{R}&\rTo^{\upalpha^{-1}}&\C_{\infty}/s^{-1}\gota_{R}\\\dTo^{\Upsilon}& &\dTo_{\Upsilon''}& 
&\dTo_{\Upsilon'}\\\mathbb{D}&\rTo_{\uprho_{\gotp_{R}}}&\mathbb{D}^{\upsigma}&\rTo_{\rm id}&\mathbb{D}^{\upsigma}\end{diagram}\end{equation}where $\Upsilon'$ is the unique analytic isomorphism making the diagram  commute. 
By {\it Note} \ref{TorsionNote},  $\D[\mathfrak{m}_{R}]$ may be identified with  
\[  (\mathfrak{m}_{R}\mathfrak{a}_{R}^{-1})^{\ast}/\mathfrak{a}_{R} ;\]
if $\mathfrak{m}_{R}$ is invertible, this reduces to the usual $\mathfrak{m}_{R}^{-1}\mathfrak{a}_{R}/\mathfrak{a}_{R}$.
\begin{clai}  For all $t\in(\mathfrak{m}_{R}\mathfrak{a}_{R}^{-1})^{\ast}/\mathfrak{a}_{R}$, 
\[\Upsilon(t)^{\upsigma}=\Upsilon'(s^{-1}t).\]
\end{clai}

\vspace{3mm}

\noindent {\it Proof of the Claim}:
By (\ref{diagram1}) we know that $\Upsilon(t)^{\upsigma}=\uprho_{\gotp_{R}}(\Upsilon(t))$ for every $t\in(\mathfrak{m}_{R}\mathfrak{a}_{R}^{-1})^{\ast}/\mathfrak{a}_{R}$. 
Using the commutativity of (\ref{diagram2}) the statement reduces to proving that \[\Upsilon'(\upalpha^{-1}t)=\Upsilon'(s^{-1}t).\]
This is equivalent to showing that
 \[\upalpha^{-1}t-s^{-1}t\in s^{-1}\gota_{R}\]for all $t\in (\mathfrak{m}_{R}\mathfrak{a}_{R}^{-1})^{\ast}/\mathfrak{a}_{R}$. Or equivalently, by Theorem \ref{AfModIso},
  \[\upalpha^{-1}t-s_{\gotq}^{-1}t\in s_{\gotq}^{-1}\gota_{R,\gotq}\]for all $t\in (\gotm_{R}\gota_{R,\gotq}^{-1})^{\ast}$ and all $\gotq$ a prime ideal in $A$. 
  Since $s_{\gotq}=\upalpha\uppi_{\gotq}u_{\gotq}$, we are reduced to showing that\[\uppi_{\gotq}u_{\gotq}t-t\in \gota_{R,\gotq}. \] 
  
First, note that since $u_{\gotq}\equiv 1 \mod  \mathfrak{m}_{R,\mathfrak{p}} $, $u_{\gotq} = 1+m$, $m\in  \mathfrak{m}_{R,\mathfrak{p}} $ and since 
$t\in(\gotm_{R}\gota_{R,\gotq}^{-1})^{\ast}$, 
$tm\in \mathfrak{a}_{R,\mathfrak{q}}$.  Thus
\[ \uppi_{\gotq}u_{\gotq}t-t\in \gota_{R,\gotq}\quad\Longleftrightarrow \quad \uppi_{\mathfrak{q}}t-t\in\mathfrak{a}_{R,\mathfrak{q}}.\]
Now if $\mathfrak{q}\not=\mathfrak{p}$, $\uppi_{\mathfrak{q}} =1$ and the result follows immediately since $\uppi_{\mathfrak{q}}t-t =0$.  If 
$\mathfrak{q}=\mathfrak{p}$, then since $\mathfrak{p}\nmid \mathfrak{c}, \mathfrak{m}$ we have 
\[   R_{\mathfrak{p}} = A_{ \mathfrak{p}} , \quad \gota_{R,\mathfrak{p}} = \mathfrak{a}_{\gotp}  \]
and 
\[  \mathfrak{m}_{R,\mathfrak{p}} = A_{\mathfrak{p}}.\]
It follows that $ (\mathfrak{m}_{R}^{\ast} )_{\mathfrak{p}} = \mathfrak{m}_{R}^{\ast} R_{\mathfrak{p}} = A_{\mathfrak{p}}$, hence $t\in  (\gotm_{R}\gota_{R,\gotp}^{-1})^{\ast}$
implies $t\in \gota_{\mathfrak{p}}$.  Since $\uppi_{\mathfrak{p}}$ is a uniformizer, $ \uppi_{\mathfrak{p}}-1$ is a unit and so
\[  \uppi_{\mathfrak{p}}t-t  = (\uppi_{\mathfrak{p}}-1)t \in \gota_{R,\mathfrak{p}}.  \]
This proves the claim.   $\lozenge$

\vspace{3mm}

The choice $\Upsilon'_{\mathfrak{m}} = \Upsilon'$ makes the diagram (\ref{1step}) commute.
By Lemma \ref{starunionlem}, $K/\gota_{R}=\bigcup_{\gotm}(\mathfrak{m}_{R}\mathfrak{a}_{R}^{-1})^{\ast}/\gota_{R}$, therefore it is enough to
show that these diagrams are compatible and so fit together to produce the diagram appearing in the statement of the Main Theorem.  
Let $\gotn\subset \gotm$; note that \[ \upxi:=\Upsilon'_{\gotn}\circ {\Upsilon'_{\gotm}}^{-1}\in {\rm Aut}(\mathbb{D}^{\upsigma})=R^{\times}= \mathbb{F}^{\times}_{q}  ;\] 
in the above, we are using the fact that $\D^{\upsigma}$ is a Drinfeld module with lattice identified with $\mathfrak{p}_{R}^{-1}\mathfrak{a}_{R}$, hence, by Theorem 1.6 of \cite{Hayes0}, the endormorphism ring
is equal to $R$ and thus $ {\rm Aut}(\mathbb{D}^{\upsigma})=R^{\times}$.

It will be enough to show that \[\Upsilon'_{\gotn}|_{(\gotm_{R} s\gota_{R}^{-1})^{\ast}/s^{-1}\gota_{R}}=\Upsilon'_{\gotm}.\] This follows, since \[\upxi \Upsilon'_{\gotm}(s^{-1}t)=\Upsilon'_{\gotn}(s^{-1}t)=\Upsilon(t)^{\upsigma}=\Upsilon'_{\gotm}(s^{-1}t), \] so that in particular, $\upxi$ acts as the identity on $\mathbb{D}^{\upsigma}[\gotm_{R}]$ and thus $\upxi=1$. 
\end{proof}

\section{Hayes Theory Over Rank 1 Orders}\label{RCFofR}

In this section, we will present what is commonly known as Hayes Theory for the order $R\subset A$.  That is, we will prove generation theorems for the Hilbert class field $H_{R}$ and
the ray class fields $K_{\mathfrak{m}_{R}}$, defined in the previous sections.  

As we have already indicated, this topic was initiated by Hayes in his orders paper \cite{Hayes0}, however
this section contains material not proved in Hayes' original work.  In the first place, we will generate $H_{R}$ using the modular invariant $j(\mathfrak{a}_{R})$ of any invertible ideal $\mathfrak{a}_{R}\subset R$,
in the style of \cite{DGV} and \cite{DGIII}: in contrast with  \cite{Hayes0}, \S 8, where $H_{R}$ is identified with the field of invariants of a Drinfeld $R$-module $\uprho$ with lattice homothetic to $\mathfrak{a}_{R}$.  Secondly,
in \cite{Hayes0}, Hayes restricts to $R=A$ in his Theorem on the generation of the ray class fields $K_{\mathfrak{m}_{R}}$ using torsion elements of $\uprho$: in this section, we prove this in this case
where $R$ is not necessarily equal to $A$.  

The main tool will be Theorem \ref{Sh}, proved in \S \ref{s:D}. In particular, the strategy will be to adapt the arguments found in \cite{DGV} to the order $R$,
discussing in detail only those results whose proofs require a non trivial modification in this case.

First some notation.   In this section we will drop the subscript $R$ from ideals in $R$, as we will not need to pass from ideals in $A$ to ideals in $R$.
Write
\[ K_{\infty}= \F_{\infty}((u_{\infty}))\]
where $u_{\infty}$ is a uniformizer at $\infty$ and $\F_{\infty}\supset\F_{q}$ is the field of constants.    We also choose a sign function
\[ {\rm sgn}:K_{\infty}^{\times}\longrightarrow \F^{\times}_{\infty}\]
so that if $x=c_{N}u_{\infty}^{N} + $ lower order terms, then $\text{sgn}(x)=c_{N}$. 
Let $S$ be a set of representatives of $\F^{\times}_{\infty}/\F^{\times}_{q}$, where we assume $1\in S$.
We say that $f\in K_{\infty}$ is positive if ${\rm sgn}(f)\in S$.   

Let $\mathfrak{a}\subset R$ be an ideal. Denote by 
\[ \mathfrak{a}^{+} = \{x\in\mathfrak{a}|\; {\rm sgn}(x)\in S\}; \]
define\[\zeta^{\gota}(n):=\sum_{x\in \gota^{+}}x^{-n}\]for any $n\in \mathbb{N}$. 
The definition of $\upzeta^{\mathfrak{a}}(n)$ is independent of the choice of signs $S$, see Note 3 of \cite{DGV}. 

Now assume $A\not=\F_{q}[T]$.  Writing \begin{align}\label{defnofJ} J(\gota):=\frac{\zeta^{\gota}(q^{2}-1)}{\zeta^{\gota}(q-1)^{q+1}}, \end{align} we define 
\begin{align}\label{defnofj} j(\gota):=\frac{1}{\frac{1}{T^{q}-T}-\frac{T^{q^{2}}-T}{(T^{q}-T)^{q+1}}J(\gota)}.\end{align}
If $\upalpha\in K^{\times}$ has sgn one then
$ ((\upalpha) \mathfrak{a})^{+}=\upalpha\mathfrak{a}^{+}$.
This implies that $ j((\upalpha)\mathfrak{a})=  j(\mathfrak{a})$ and we obtain
a well-defined function on the narrow class group
\[  j=j_{R}:{\sf Cl}^{1}_{R}\longrightarrow  \mathbb{C}_{\infty}.\]
In fact, it is a class invariant:

\begin{prop}\label{classinv} $j$ induces a well-defined function 
\[  j:{\sf Cl}(A)\longrightarrow  \mathbb{C}_{\infty}.\]
\end{prop}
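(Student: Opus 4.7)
The plan is to extend the scalar invariance already observed for sign-one $\upalpha$ to arbitrary $\upalpha\in K^{\times}$, thereby descending $j$ from the narrow class group ${\sf Cl}^{1}_{R}$ to ${\sf Cl}_{R}$. One then combines this with the epimorphism ${\sf Cl}_{R}\twoheadrightarrow {\sf Cl}_{A}$ of Corollary \ref{surjectionofclassgroups} and the bijection of Theorem \ref{bijectionprimetoc} to descend further to ${\sf Cl}(A)$.

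For the scalar invariance, I substitute $y=\upalpha x$ into the defining sum
\[ \zeta^{\upalpha\mathfrak{a}}(n)=\sum_{y\in(\upalpha\mathfrak{a})^{+}}y^{-n}, \]
so that the constraint ${\rm sgn}(y)\in S$ becomes ${\rm sgn}(x)\in {\rm sgn}(\upalpha)^{-1}S$, which is again a complete set of representatives for $\F_{\infty}^{\times}/\F_{q}^{\times}$. Invoking the independence of $\zeta^{\mathfrak{a}}(n)$ on this choice of representatives (Note 3 of \cite{DGV}), the inner sum equals $\zeta^{\mathfrak{a}}(n)$, so
\[ \zeta^{\upalpha\mathfrak{a}}(n)=\upalpha^{-n}\zeta^{\mathfrak{a}}(n). \]
Substituting into (\ref{defnofJ}), the factor $\upalpha^{-(q^{2}-1)}$ in the numerator cancels with $(\upalpha^{-(q-1)})^{q+1}=\upalpha^{-(q^{2}-1)}$ in the denominator, yielding $J(\upalpha\mathfrak{a})=J(\mathfrak{a})$ and hence $j(\upalpha\mathfrak{a})=j(\mathfrak{a})$. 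This shows $j$ is a well-defined function on ${\sf Cl}_{R}$.

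For the final descent, let $\mathfrak{a},\mathfrak{b}$ be invertible $R$-ideals with $[\mathfrak{a}A]=[\mathfrak{b}A]$ in ${\sf Cl}_{A}$. By Lemma \ref{relprimerep} combined with the scalar invariance just proved, I may assume that both $\mathfrak{a}$ and $\mathfrak{b}$ are integral and prime to $\mathfrak{c}$ without changing $j$-values. The hypothesis then gives $\mathfrak{a}A=\upalpha\mathfrak{b}A$ for some $\upalpha\in K^{\times}$, and a further application of the approximation theorem in the spirit of Lemma \ref{relprimerep} allows me to adjust $\upalpha$ within its coset so that $\upalpha\mathfrak{b}$ is integral and prime to $\mathfrak{c}$ as well. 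Theorem \ref{bijectionprimetoc}, which gives a bijection between prime-to-$\mathfrak{c}$ ideals of $A$ and of $R$, then forces $\upalpha\mathfrak{b}=\mathfrak{a}$, whence $j(\mathfrak{a})=j(\upalpha\mathfrak{b})=j(\mathfrak{b})$ by scalar invariance.

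The main obstacle is the second approximation step: one must choose $\upalpha$ with simultaneous control over its valuations at the conductor primes (to guarantee primality to $\mathfrak{c}$) and at the primes dividing $\mathfrak{a}\mathfrak{b}^{-1}$ (to guarantee integrality after scaling). The input from Note 3 of \cite{DGV} plus the prime-to-conductor contraction bijection of Theorem \ref{bijectionprimetoc} are doing the deeper work; the rest is bookkeeping.
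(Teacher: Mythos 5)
Your first paragraph is exactly the intended content of Proposition \ref{classinv} and matches the paper's route: the paper's proof is a citation of Proposition 1 of \cite{DGV}, whose argument is precisely the homogeneity computation you give ($\upzeta^{\upalpha\mathfrak{a}}(n)=\upalpha^{-n}\upzeta^{\mathfrak{a}}(n)$ for $(q-1)\mid n$, using independence of the choice of sign representatives $S$, and then cancellation of $\upalpha^{-(q^{2}-1)}$ in the ratio defining $J$). Note that the statement's ``${\sf Cl}(A)$'' is a notational slip carried over from \cite{DGV}, where the ring is $A$ itself: what is meant is the ordinary class group of the order under consideration, i.e.\ the descent from ${\sf Cl}^{1}_{R}$ to ${\sf Cl}_{R}$. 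This is confirmed by the way the result is used immediately afterwards (Theorem \ref{jt3} and Corollary \ref{cor} treat $j$ as a function on ${\sf Cl}_{R}$, and Corollary \ref{cor} asserts it is \emph{injective} there).

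Your second stage --- the further descent through ${\sf Cl}_{R}\twoheadrightarrow{\sf Cl}_{A}$ --- is a genuine error, on two counts. First, the pivotal ``adjustment'' step is impossible: once $\mathfrak{a}A=\upalpha\,\mathfrak{b}A$, the element $\upalpha$ is determined up to $A^{\times}=\F_{q}^{\times}$ (any $\upalpha'$ with $\upalpha'\mathfrak{b}A=\upalpha\mathfrak{b}A$ satisfies $(\upalpha'/\upalpha)A=A$), so there is no coset of $\upalpha$'s to approximate within, and $\upalpha\mathfrak{b}$ need not be integral, prime to $\mathfrak{c}$, or even contained in $R$. Second, and more fundamentally, the statement you are trying to prove in that stage is false whenever $R\subsetneq A$ has ${\rm Ker}({\sf Cl}_{R}\to{\sf Cl}_{A})\neq 1$: had your argument worked, it would show $\mathfrak{a}=\upalpha\mathfrak{b}$, i.e.\ that the epimorphism of Corollary \ref{surjectionofclassgroups} is injective, equivalently that $j$ factors through ${\sf Cl}_{A}$. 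But a nonprincipal invertible $R$-ideal $\mathfrak{a}$ whose expansion $\mathfrak{a}A$ is principal would then have to satisfy $j(\mathfrak{a})=j((1))$, contradicting Theorem \ref{jadifffrom1}, and factoring through ${\sf Cl}_{A}$ is incompatible with the injectivity of $j$ on ${\sf Cl}_{R}$ in Corollary \ref{cor}. So you should simply delete the second stage: the proposition asks only for class invariance (descent from the narrow class group), which your homogeneity argument already establishes.
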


\begin{proof} Formally the same as that of Proposition 1 of \cite{DGV} .\end{proof}


Since $R$ is Noetherian, it is a finitely generated $\F_{q}$-algebra:
\begin{align}\label{Abasis} R =\F_{q}[f_{1},f_{2},\dots ,f_{N}]   = \langle 1, f_{1},f_{2},\dots , f_{N}, f_{N+1},\dots \rangle_{\F_{q}}\end{align}
where the presentation on the far right is that of an $\F_{q}$ vector space and where the additional vector space generators $f_{N+1},\dots$ complement the ring generators $f_{1},\dots , f_{N}$.  We assume
that $0<\deg (f_{1})<\deg (f_{2})<\cdots $. 


Fix $\mathfrak{a}\subset R$ a non-principal invertible ideal: by Theorem \ref{2gentheo}, we may
write $\mathfrak{a}=(g,h)$; we suppose that $\deg (g)<\deg (h)$.  Without loss of generality, we may assume
\begin{itemize}
\item[1.] $g$ has the smallest degree of all positive non-zero
elements of $\mathfrak{a}$.  Indeed, by {\it Note} \ref{mingennote}, for any non-zero $x\in\mathfrak{a}$, there exists $y\in\mathfrak{a}$ with $\mathfrak{a}=(x,y)$.   
\item[2.] The degree of $h$ is minimal amongst $h'$ with $\mathfrak{a}=(g,h')$ and in particular, $\deg {h}\not= \deg (fg)$ for all $f\in R$ (otherwise, we may replace $h$ by $h'= ch + fg$, where $c\in\F_{q}$ is chosen so that $\deg (h') < \deg (h)$.)
\end{itemize}

\begin{theo}\label{jadifffrom1}  For all $\mathfrak{a}\subset R$ a non principal invertible ideal, $j(\mathfrak{a})\not=j((1))$. 
\end{theo}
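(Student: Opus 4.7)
The plan is to reduce the claim to showing $J(\mathfrak{a}) \neq J((1))$, and then to compare the leading terms in the $K_{\infty}$-expansions of $\zeta^{\mathfrak{a}}(n)$ and $\zeta^{(1)}(n)$ with $n=q-1$ and $n=q^{2}-1$, adapting the argument from \cite{DGV} to the order context. Since the Möbius transformation in (\ref{defnofj}) has coefficient $\frac{T^{q^{2}}-T}{(T^{q}-T)^{q+1}}\neq 0$ in $K$, the equality $j(\mathfrak{a})=j((1))$ is equivalent to $J(\mathfrak{a})=J((1))$, and so it suffices to produce a single term in a valuation-theoretic expansion of $J(\mathfrak{a})-J((1))$ that is visibly nonzero.

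First I would order the elements of $\mathfrak{a}^{+}$ by increasing degree at $\infty$. By our choice $g$ is the positive element of smallest degree in $\mathfrak{a}$ and sits alone in its degree class, so
\[
\zeta^{\mathfrak{a}}(n) \;=\; g^{-n}\bigl(1+S^{\mathfrak{a}}(n)\bigr),\qquad S^{\mathfrak{a}}(n)\;=\;\sum_{x\in\mathfrak{a}^{+},\,x\neq g}\bigl(x/g\bigr)^{-n},
\]
and each term of $S^{\mathfrak{a}}(n)$ has strictly positive degree in the uniformizer $u_{\infty}$. Substituting into $J$, the leading powers of $g$ cancel because $(q-1)(q+1)=q^{2}-1$, giving
\[
J(\mathfrak{a}) \;=\;\frac{1+S^{\mathfrak{a}}(q^{2}-1)}{\bigl(1+S^{\mathfrak{a}}(q-1)\bigr)^{q+1}}.
\]
I would then expand this ratio as a convergent series in the $u_{\infty}$-topology, using Frobenius linearity $(1+\beta)^{q}=1+\beta^{q}$ to control $(1+S^{\mathfrak{a}}(q-1))^{q+1}$. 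For the trivial class $\mathfrak{a}=(1)$, $g=1$ and $S^{(1)}(n)$ involves only the $\F_{q}$-basis $\{f_{i}\}$ from (\ref{Abasis}); for $\mathfrak{a}=(g,h)$ non-principal, $S^{\mathfrak{a}}(n)$ contains an unavoidable term of degree $n(\deg h-\deg g)$ contributed by $h/g$.

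The heart of the argument is then to identify the lowest-order term of $J(\mathfrak{a})-J((1))$. Our hypotheses on $g$ (minimal degree in $\mathfrak{a}^{+}$) and on $h$ (degree chosen so that $\deg h\neq \deg(fg)$ for any $f\in R$) are precisely what forces the $h$-contribution to sit in a degree that does not coincide with any contribution in $S^{(1)}(n)$, and hence cannot be cancelled by $R$-terms appearing in $S^{\mathfrak{a}}(n)$. Comparing the coefficient of the lowest such degree on both sides of the putative equality $J(\mathfrak{a})=J((1))$ yields a contradiction.

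The main obstacle is the bookkeeping of degrees in characteristic $p$: expanding $(1+S^{\mathfrak{a}}(q-1))^{q+1}$ produces Frobenius-twisted cross terms whose degrees must be tracked simultaneously with the direct series $1+S^{\mathfrak{a}}(q^{2}-1)$, and one must ensure that the $h$-induced degree $\deg h-\deg g$ (and its multiples by $q-1$ and $q^{2}-1$) cannot be matched by any combination of degrees coming from products of the $f_{i}$. The hypothesis $A\neq\F_{q}[T]$ together with Lemma \ref{EventEveryOrder} and the normalization choices on $g$ and $h$ are exactly what makes this possible, much as in the Dedekind case of \cite{DGV}.
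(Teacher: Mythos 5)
Your strategy -- reduce to $J(\mathfrak{a})\neq J((1))$ via injectivity of the map $J\mapsto j$ in (\ref{defnofj}), factor the minimal-degree positive element $g$ out of the zeta values so that $J(\mathfrak{a})=(1+S^{\mathfrak{a}}(q^{2}-1))/(1+S^{\mathfrak{a}}(q-1))^{q+1}$, and then exhibit an uncancellable contribution coming from $h/g$ -- is exactly the route the paper intends: its proof of Theorem \ref{jadifffrom1} is a one-line deferral to the argument of \S 6 of \cite{DGV}, which is this kind of leading-term comparison, and your use of the normalizations on $g$ and $h$ matches the set-up the paper makes just before the statement. The problem is that, as written, your argument stops precisely where that argument does its work. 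The assertion that the $h$-induced degrees ``cannot be matched by any combination of degrees coming from products of the $f_{i}$'' is the whole content of the theorem, and it does not follow formally from the hypotheses you cite: the condition $\deg h\neq\deg(fg)$ for $f\in R$ only forbids a collision of $(h/g)^{-n}$ with single terms $f^{-n}$, whereas $S^{\mathfrak{a}}(n)$ also contains terms $((af+bh)/g)^{-n}$ of intermediate degree, and expanding $(1+S^{\mathfrak{a}}(q-1))^{-(q+1)}$ against $1+S^{\mathfrak{a}}(q^{2}-1)$ produces Frobenius-twisted cross terms whose degrees are sums of several such degrees. Ruling out all of these coincidences is the several-page valuation bookkeeping that constitutes \S 6 of \cite{DGV}; your final paragraph names this as ``the main obstacle'' but does not resolve it, so the proposal is a correct plan rather than a proof.

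Two smaller points. First, the claim that $g$ ``sits alone in its degree class'' in $\mathfrak{a}^{+}$ is automatic only when $\F_{\infty}=\F_{q}$: if $d_{\infty}>1$, the ideal $\mathfrak{a}$ may contain $\F_{q}$-independent positive elements of the same minimal degree (their leading coefficients being $\F_{q}$-independent in $\F_{\infty}$), and then the normalization $\zeta^{\mathfrak{a}}(n)=g^{-n}(1+S^{\mathfrak{a}}(n))$ with $S^{\mathfrak{a}}(n)$ of strictly positive $u_{\infty}$-order requires a separate argument about the sum of leading coefficients for $n$ divisible by $q-1$. Second, when you invoke the basis (\ref{Abasis}) to describe $S^{(1)}(n)$, remember that for the order $R$ the degrees of the $f_{i}$ need not realize all large integers in any convenient pattern (only eventually, by Lemma \ref{EventEveryOrder}), so any degree-matching argument must be phrased against the actual value semigroup of $R$ rather than against $\{\deg f_{i}\}$ alone.
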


\begin{proof}  The proof of this result is formally the same as that found in \S 6 of \cite{DGV}.
 \end{proof}
 
 The following is the analog for $R$ of a Theorem of Goss:
 
 \begin{theo}\label{GossTheorem} Let $\uprho$ be an  Hayes $R$-module with lattice $\upxi_{\uprho}\mathfrak{a}$, $\mathfrak{a}\subset R$ an invertible ideal. 
 Then for all $n\in\N$,
\[  \upxi_{\uprho}^{-n}  \sum_{0\not=x\in \mathfrak{a}}  x^{-n} \in H^{1}_{R}. \]
\end{theo}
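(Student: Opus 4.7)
The plan is to reduce the claim to showing that the Eisenstein-type power sum
\[ S_n(\Uplambda_{\uprho}) := \sum_{0 \ne \uplambda \in \Uplambda_{\uprho}} \uplambda^{-n} \]
lies in $H_R^1$, where $\Uplambda_{\uprho} = \upxi_{\uprho}\mathfrak{a}$ is the lattice uniformizing $\D$. The substitution $\uplambda = \upxi_{\uprho}x$ gives $S_n(\Uplambda_{\uprho}) = \upxi_{\uprho}^{-n}\sum_{0 \ne x \in \mathfrak{a}} x^{-n}$, so the two quantities coincide and the theorem translates into a statement purely about power sums of $\Uplambda_{\uprho}$.

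To evaluate $S_n(\Uplambda_{\uprho})$ in terms of objects known to lie in $H_R^1$, I would exploit the partial fraction identity
\[ \frac{1}{e_{\uprho}(z)} = \sum_{\uplambda \in \Uplambda_{\uprho}} \frac{1}{z-\uplambda}, \]
which is valid because $e_{\uprho}$ has simple zeros at each $\uplambda\in\Uplambda_{\uprho}$ with formal derivative $e_{\uprho}'(z) = 1$ (in characteristic $p$ the higher terms $e_k z^{q^k}$ contribute nothing to the derivative). Isolating the pole at $z=0$ and expanding the remaining sum as a geometric series yields the lattice-side formula
\[ \frac{1}{e_{\uprho}(z)} - \frac{1}{z} \;=\; -\sum_{m \geq 0} S_{m+1}(\Uplambda_{\uprho})\, z^m. \]
On the other hand, starting from $e_{\uprho}(z) = z + e_1 z^q + e_2 z^{q^2} + \cdots$ and formally inverting the factor $1 + e_1 z^{q-1} + e_2 z^{q^2-1} + \cdots$ in $\C_{\infty}[[z]]$, the coefficient of $z^m$ in $1/e_{\uprho}(z) - 1/z$ is visibly a polynomial in the $e_k$ with coefficients in $\F_p$. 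Matching the two expansions forces each $S_n(\Uplambda_{\uprho})$ to be such a polynomial in the coefficients of $e_{\uprho}$.

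By the corollary following Theorem \ref{normalizingfield}, the coefficients $e_k$ of $e_{\uprho}$ lie in $H_A^1 \subseteq H_R^1$, and so $S_n(\Uplambda_{\uprho}) \in H_R^1$, as required. The only delicate point is justifying the partial fraction identity in the non-Archimedean setting, which hinges on a convergence argument for the sum over $\uplambda\in\Uplambda_{\uprho}$; this is the essence of the classical Goss--Carlitz treatment in the Dedekind case and transfers to orders without any new ingredients, so the order case is ultimately no harder than the classical one once the coefficients of $e_{\uprho}$ are known to lie in the narrow Hilbert class field.
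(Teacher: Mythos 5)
Your proposal is correct and follows essentially the same route as the paper: the paper's proof notes that the coefficients of $e_{\uprho}$ lie in the narrow Hilbert class field (via Theorem \ref{normalizingfield} and its corollary) and then invokes the classical argument (Thakur, Theorem 5.2.5), which is exactly the logarithmic-derivative/partial-fraction expansion of $1/e_{\uprho}(z)$ and coefficient matching that you spell out. The only point to note is that the relevant field for a Hayes $R$-module is $H_{R}^{1}$ (the corollary's ``$H_{A}^{1}$'' notwithstanding), which does not affect your conclusion since you only need membership in $H_{R}^{1}$.
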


\begin{proof}  Since for each $a\in R$, the coefficients of $\uprho_{a}$ are in $H^{1}_{R}$, the same is true of the corresponding exponential $e_{\uprho}$.
 The proof is then identical to that in the case of $R=A$ e.g.\ see Theorem 5.2.5 on page
159 of \cite{Thak}.
\end{proof}


We now use the Main Theorem to prove the following key result.    

\begin{theo}\label{t3}  
Let $\mathfrak{a}\in {\sf I}_{R}^{\mathfrak{c}}$ and let $s\in \I^{R}$ be a $K$-id\`{e}le prime to $\mathfrak{c}$.  Then $J(\mathfrak{a})\in K^{\rm ab}_{\infty}$
and 
\[  J(s^{-1}\mathfrak{a}) = J(\mathfrak{a})^{[s,K]}. \]
\end{theo}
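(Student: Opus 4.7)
The plan is to express $J(\mathfrak{a})$ as a ratio of Goss-renormalized zeta values attached to a Hayes $R$-module $\D$ with lattice homothetic to $\mathfrak{a}$, to deduce via Theorem~\ref{GossTheorem} that it lies in $H^1_R$, and then to use Theorem~\ref{Sh} to identify its $\upsigma$-conjugate as the same ratio attached to $\D^\upsigma$, whose lattice is homothetic to $s^{-1}\mathfrak{a}$.

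For the rationality claim, I would fix a Hayes module $\D=(\C_\infty,\uprho)$ with lattice $\upxi_\uprho\mathfrak{a}$. Every nonzero element of $\mathfrak{a}$ has a unique $\F_q^\times$-multiple lying in $\mathfrak{a}^+$, and $c^n=1$ for every $c\in\F_q^\times$ when $n\in\{q-1,q^2-1\}$; consequently $\sum_{0\neq x\in\mathfrak{a}}x^{-n}=-\zeta^\mathfrak{a}(n)$ for such $n$. Theorem~\ref{GossTheorem} then gives $\upxi_\uprho^{-n}\zeta^\mathfrak{a}(n)\in H^1_R$, and since $(q-1)(q+1)=q^2-1$ the transcendental period $\upxi_\uprho$ cancels in the ratio
\[ J(\mathfrak{a})=-\frac{\upxi_\uprho^{-(q^2-1)}\zeta^\mathfrak{a}(q^2-1)}{\bigl(\upxi_\uprho^{-(q-1)}\zeta^\mathfrak{a}(q-1)\bigr)^{q+1}}\in H^1_R\subset K^{\rm ab}_\infty, \]
using $(-1)^{q+1}=1$ to absorb the sign.

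For the transformation law, I would pick $\upsigma\in\mathrm{Aut}(\C_\infty/K)$ with $\upsigma|_{K^{\rm ab}_\infty}=[s,K]$. Theorem~\ref{Sh} delivers an analytic $R$-module isomorphism $\Upsilon':\C_\infty/s^{-1}\mathfrak{a}\to\D^\upsigma$, so $\D^\upsigma$—which remains sign-normalized by (\ref{HayesThm}) since the $\ast$-action preserves Hayes modules—has lattice of the form $\upxi_{\uprho^\upsigma}\,s^{-1}\mathfrak{a}$ for an appropriate period $\upxi_{\uprho^\upsigma}\in\C_\infty$. The exponential $e_{\uprho^\upsigma}$ is exactly $e_\uprho$ with $\upsigma$ applied to its Taylor coefficients, since $\upsigma$ fixes $R$ pointwise.

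The hard part will be to justify the identity
\[ \bigl(\upxi_\uprho^{-n}\zeta^\mathfrak{a}(n)\bigr)^\upsigma \;=\; \upxi_{\uprho^\upsigma}^{-n}\zeta^{s^{-1}\mathfrak{a}}(n), \]
which I plan to establish by extracting from the proof of Goss's theorem (cf.~\cite{Thak}, Theorem~5.2.5) an explicit recursive expression of each Goss-renormalized value as an $\F_p$-polynomial in the Taylor coefficients of $e_\uprho$, derived from the functional equations $e_\uprho(az)=\uprho_a(e_\uprho(z))$ for $a\in R$. Because $\upsigma$ fixes the base field and carries these coefficients to those of $e_{\uprho^\upsigma}$, the same polynomial evaluated on the conjugate coefficients returns the Goss value of the lattice $\upxi_{\uprho^\upsigma}\,s^{-1}\mathfrak{a}$, namely $\upxi_{\uprho^\upsigma}^{-n}\zeta^{s^{-1}\mathfrak{a}}(n)$ (up to the same $(-1)$ from the $\mathfrak{a}^+$-to-$(\mathfrak{a}\setminus\{0\})$ conversion). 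Substituting into the formula for $J(\mathfrak{a})^\upsigma$ and reapplying the $\upxi$-cancellation then gives $J(\mathfrak{a})^\upsigma=J(s^{-1}\mathfrak{a})$. The essential conceptual input is thus Theorem~\ref{Sh}, which supplies the analytic uniformization of $\D^\upsigma$ by $s^{-1}\mathfrak{a}$; the remainder is the algebraic transportation of Goss's formulas through the Galois action.
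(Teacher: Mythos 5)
Your proposal follows essentially the same route as the paper's proof: normalize the zeta values by the period $\upxi_{\uprho}$ of a Hayes module attached to $\mathfrak{a}$, get rationality from Theorem \ref{GossTheorem} together with the homogeneity of $J$ (so the period cancels), and get the Galois equivariance from Theorem \ref{Sh} combined with the universal recursion expressing the coefficients of $e_{\uprho}$ in terms of those of $\uprho_{a}$. The one step you leave as a plan --- writing the normalized zeta values as universal expressions in the exponential coefficients --- is exactly what the paper does by expanding the logarithmic derivative, $1/e_{\uprho}(z)=-\sum_{n\geq 0}\widetilde{\upzeta}^{\mathfrak{a}}(n+1)z^{n}$, so your outline is sound and matches the paper's argument.
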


\begin{proof}
We first show that $J(\gota)\in H^{1}_{R}$. 
Let us call
\begin{align}\label{normzeta}\widetilde{\upzeta}^{\gota}(n):=\frac{\upzeta^{\gota}(n)}{{\upxi_{\uprho}}^{n}}\end{align}
the \textsl{normalized} 
value of $\upzeta^{\gota}(n)$, where $\upxi_{\uprho}$ is the normalizing transcendental element of $\C_{\infty}$ that corresponds to $\D = (\C_{\infty}, \uprho)$, a Hayes module associated to the ideal $\mathfrak{a}$, see section \S \ref{s:C} of this paper.
As we shall see, the choice of $\D$ amongst the $(q^{d_{\infty}}-1)/(q-1)$ Hayes modules associated to $\mathfrak{a}$ will not effect our arguments.
Because $J(\gota)$ is a homogeneous ratio of values of the 
zeta function $\upzeta^{\gota}$ (see equation (\ref{defnofJ})), we may replace these values by their normalized values. 
Then, we claim, by Theorem \ref{GossTheorem}, that the normalized values are all in $H^{1}_{R}$. Indeed, we have $\upxi_{\uprho}^{-n}\cdot \sum_{0\not= x\in\mathfrak{a}} x^{-n}\in H^{1}_{R}$.
  However, since $\mathfrak{a}\setminus \{ 0\} = \bigsqcup_{c\in\F_{q}^{\times}} c\mathfrak{a}^{+}$,  
  \[  \sum_{0\not= x\in\mathfrak{a}} x^{-n}=\# \F_{q}^{\times}\cdot \upzeta^{\mathfrak{a}}(n)= - \upzeta^{\mathfrak{a}}(n) .\]
 Let $\upsigma=[s,K]$. 
We claim that for $(q-1)|n$, \begin{equation}\label{normzetasareconj} \widetilde{\upzeta}^{\gota}(n)^{\upsigma}=\widetilde{\upzeta}^{s^{-1}\gota}(n).\end{equation}
First note that if $e_{\uprho}(z)=e_{\upxi_{\uprho}\gota}(z)$ is the associated exponential function, then taking its logarithmic derivative we get
\[\frac{1}{e_{\uprho}(z)}=\sum_{\upalpha\in\mathfrak{a}} \frac{1}{z+\upxi_{\uprho}\upalpha}
=
-\sum_{n=0}^{\infty}\sum_{\upalpha\in \gota}\frac{z^{n}}{(\upxi_{\uprho}\upalpha)^{n+1}}
=- \sum_{n=0}^{\infty}\widetilde{\upzeta}^{\gota}(n+1)z^{n}.\]
Therefore 
\[ e_{\uprho}(z)=\sum_{n=0}^{\infty}c_{n}z^{q^{n}}, \quad c_{n}\in H^{1}_{R}, \]
 where the $c_{n}$ are algebraic 
combinations of the $\widetilde{\upzeta}^{\gota}(n+1)$ of a universal form which is dictated by the formula for the reciprocal of a power series. 
Fix $a\in R$ and write \[\uprho_{a}(\tau)=a+g_{1}\tau+\cdots +g_{d}\tau^{d},\quad g_{1},\dots ,g_{d}\in H_{R}^{1} .\]
Then, the equation
\[e_{\uprho}(az)=\uprho_{a}(e_{\uprho}(z))\]
implies 
that 
\begin{align*} az+c_{1}a^{q}z^{q}+(c_{2}a^{q^{2}})z^{q^{2}}+\cdots & = \uprho_{a}(z+c_{1}z^{q}+\cdots ) \\
& =\uprho_{a}(z)+\uprho_{a}(c_{1}z^{q})+\cdots \\ &=az+(g_{1}+ac_{1})z^{q}+(g_{2}+c_{1}^{q}g_{1}+ac_{2})z^{q^{2}}+\cdots \end{align*}
The shape of the last expression above is again of a universal nature and depends only on the coefficients of $\uprho_{a}$ and the coefficients $e_{\uprho}(z)$.
That is, we have \[c_{1}=\frac{g_{1}}{a^{q}-a}, \quad
c_{2}=\frac{g_{2}+c_{1}^{q}g_{1}}{a^{q^{2}}-a},\quad \dots \]
and the coefficients of $e_{\uprho}(z)$ may be solved for in terms of the coefficients of $\uprho_{a}$ using a universal recursion.  In particular, we have given a formula
for the coefficients of the normalized exponential attached to any Hayes module $(\D,\uprho )$, which only depends on the coefficients of $\uprho_{a}$ for $a\in R$ fixed.
As $\mathbb{D}^{\upsigma}=(\C_{\infty},\uprho^{\upsigma})$ has lattice homothetic to $s^{-1}\gota$ by Theorem \ref{Sh}, 
it follows that
\[e_{\uprho^{\upsigma}}(z)=   e_{\upxi_{\uprho^{\upsigma}} s^{-1}\mathfrak{a}} (z)   =\sum_{n=0}^{\infty}c_{n}^{\upsigma}z^{n}  \]
where $\upxi_{\uprho^{\upsigma}} $ is the transcendental factor associated to $\D^{\upsigma}$.
Therefore for all $n$
\begin{align}  \frac{\upzeta^{s^{-1}\mathfrak{a}}(n)  }{\upxi^{n}_{\uprho^{\upsigma}} } =  \widetilde{\upzeta}^{s^{-1}\gota}(n) = (\widetilde{\upzeta}^{\gota}(n))^{\upsigma}.\end{align}
The statement about $J$ follows immediately.
\end{proof}

\begin{theo}\label{jt3} For all $\mathfrak{a}\in {\sf Cl}_{R}$, $j(\mathfrak{a})\not=\infty$.  In particular, for any $s\in \I^{R}$, \[  j(s^{-1}\mathfrak{a}) = j(\mathfrak{a})^{[s,K]}. \]
\end{theo}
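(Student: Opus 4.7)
The plan is to handle the two claims sequentially. The second assertion (Galois covariance of $j$) will drop out of Theorem \ref{t3} once we know $j(\mathfrak{a})$ is finite, so the crux is to establish that $j(\mathfrak{a})\neq\infty$ for every class $\mathfrak{a}\in{\sf Cl}_{R}$. I would prove finiteness by contradiction, combining Theorem \ref{jadifffrom1} with the Galois covariance of $J$ supplied by Theorem \ref{t3}.

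First, I would translate the condition $j(\mathfrak{a})=\infty$ into an equation on $J$. Inspection of formula (\ref{defnofj}) shows $j(\mathfrak{a})=\infty$ precisely when $J(\mathfrak{a})=\upalpha_{0}$, where
\[ \upalpha_{0} := \frac{(T^{q}-T)^{q}}{T^{q^{2}}-T} \;\in\; K. \]
Suppose, towards a contradiction, that some invertible $\mathfrak{a}_{0}$---which by Lemma \ref{relprimerep} we may take in ${\sf I}_{R}^{\mathfrak{c}}$---satisfies $J(\mathfrak{a}_{0})=\upalpha_{0}$. Since $\upalpha_{0}\in K$ is fixed by every element of ${\rm Gal}(K^{\rm ab}_{\infty}/K)$, Theorem \ref{t3} yields
\[ J(s^{-1}\mathfrak{a}_{0}) \;=\; J(\mathfrak{a}_{0})^{[s,K]} \;=\; \upalpha_{0} \qquad \text{for every } s\in\I^{R}, \]
so $j(s^{-1}\mathfrak{a}_{0})=\infty$ for every such $s$.

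The next step is to observe that as $s$ varies over $\I^{R}$, the ideal $s^{-1}\mathfrak{a}_{0}$ realizes every class in ${\sf Cl}_{R}$. Given $\mathfrak{b}\in{\sf I}_{R}^{\mathfrak{c}}$, Corollary \ref{PrimeToConductor} lets us factor $\mathfrak{a}_{0}\mathfrak{b}^{-1}=\prod_{\mathfrak{p}\nmid\mathfrak{c}}\mathfrak{p}_{R}^{n_{\mathfrak{p}}}$; choosing $s\in\I^{R}$ with $s_{\mathfrak{p}}=\uppi_{\mathfrak{p}}^{n_{\mathfrak{p}}}$ at these places and $s_{\mathfrak{q}}=1$ elsewhere produces $(s)=\mathfrak{a}_{0}\mathfrak{b}^{-1}$, hence $s^{-1}\mathfrak{a}_{0}=\mathfrak{b}$ as $R$-ideals. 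By the class-invariance of $j$ (Proposition \ref{classinv}) we conclude $j(\mathfrak{b})=\infty$ for every $\mathfrak{b}\in{\sf Cl}_{R}$; in particular $j((1))=\infty$, so $j(\mathfrak{a})=j((1))$ for every non-principal invertible $\mathfrak{a}$, flatly contradicting Theorem \ref{jadifffrom1}.

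With finiteness in hand, the Galois covariance of $j$ follows at once: $j$ is the image of $J$ under a M\"{o}bius transformation (\ref{defnofj}) with coefficients in $K$, and such a transformation commutes with the $[s,K]$-action on $\C_{\infty}\setminus\{\upalpha_{0}\}$. Applying it to the identity $J(s^{-1}\mathfrak{a})=J(\mathfrak{a})^{[s,K]}$ of Theorem \ref{t3} delivers $j(s^{-1}\mathfrak{a})=j(\mathfrak{a})^{[s,K]}$. The only subtle point I foresee is the surjectivity step in the third paragraph: one must separate the primes $\mathfrak{p}\mid\mathfrak{c}$ (where $\I^{R}$ permits only units) from those coprime to $\mathfrak{c}$ (where $\I^{R}$ allows arbitrary local factors) and check that $\mathfrak{a}_{0}\mathfrak{b}^{-1}$ is supported away from the conductor---which it is, since both $\mathfrak{a}_{0}$ and $\mathfrak{b}$ lie in ${\sf I}_{R}^{\mathfrak{c}}$.
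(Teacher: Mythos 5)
Your route is the natural way to obtain the theorem from the results quoted in this section, and where the paper simply defers to Theorem 3 of \cite{DGV} you supply the expected mechanism: $j(\mathfrak{a})=\infty$ forces $J(\mathfrak{a})$ to be the $K$-rational value $\upalpha_{0}=(T^{q}-T)^{q}/(T^{q^{2}}-T)$, which is fixed by every $[s,K]$, so Theorem \ref{t3} together with the surjectivity of $s\mapsto (s)$ from $\I^{R}$ onto ${\sf I}^{\mathfrak{c}}_{R}$ (and Lemma \ref{relprimerep}, Proposition \ref{classinv}) makes $j$ identically $\infty$ on ${\sf Cl}_{R}$. Your treatment of the surjectivity step, including the remark that $\mathfrak{a}_{0}\mathfrak{b}^{-1}$ is supported away from the conductor, is correct, and the deduction of the covariance of $j$ from that of $J$ once finiteness is known is fine (note only that $[s,K]$ acts on $K^{\rm ab}_{\infty}$ rather than on all of $\C_{\infty}$, which suffices since $J(\mathfrak{a})\in K^{\rm ab}_{\infty}$ by Theorem \ref{t3}).

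There is, however, one genuine gap: your contradiction needs a non-principal invertible ideal to exist. Theorem \ref{jadifffrom1} separates $j(\mathfrak{a})$ from $j((1))$ only for \emph{non-principal} invertible $\mathfrak{a}$, so if ${\sf Cl}_{R}$ is trivial the conclusion ``$j\equiv\infty$ on ${\sf Cl}_{R}$'' contradicts nothing, and the assertion $j((1))\not=\infty$ --- which is exactly what the theorem claims in that case --- is left unproved. The hypothesis $h_{R}=1$ is not excluded by the standing assumptions: there are $A\not=\F_{q}[T]$ with $h_{A}=1$, and proper orders can also have trivial class group. Nor can the case be dismissed by a size estimate, since $|(T^{q}-T)^{q}/(T^{q^{2}}-T)|=1$, so one cannot rule out $J((1))=\upalpha_{0}$ by absolute values alone; settling it requires finer information about $J((1))$ of the kind that goes into the proof of Theorem \ref{jadifffrom1} (\S 6 of \cite{DGV}), now applied to the principal class itself. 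As written, your argument proves the theorem only under the additional assumption $h_{R}>1$; you should either add an argument for the principal class when $h_{R}=1$ or note explicitly that this case needs separate treatment.
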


\begin{proof}  The proof is identical to that of Theorem 3, \cite{DGV}.
\end{proof}

\begin{coro}\label{cor}   The $j$-invariant  takes values in $H_{R}\subset H_{R}^{1}$, the function \[ j:{\sf Cl}_{R}\longrightarrow H_{R}\]
is injective and 
\[ H_{R}=K(j(\gota)).\]
\end{coro}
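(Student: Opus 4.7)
The plan is to prove the three assertions in the order stated, leaning on Theorems \ref{t3}, \ref{jt3}, and \ref{jadifffrom1} together with the class field theory of the order $R$ set up in \S \ref{s:C}. For the containment $j(\mathfrak{a})\in H_R$, I will descend from $H^1_R$, where (the proof of) Theorem \ref{t3} already places $J(\mathfrak{a})$, and hence $j(\mathfrak{a})$. Under the isomorphism $\mathsf{Cl}^1_R\cong \mathrm{Gal}(H^1_R/K)$, the subgroup $\mathrm{Gal}(H^1_R/H_R)$ corresponds exactly to the kernel of the natural surjection $\mathsf{Cl}^1_R\twoheadrightarrow \mathsf{Cl}_R$, namely to narrow classes of principal $R$-ideals $\mathfrak{b}=(r)$. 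Picking $s\in \I^R$ with $(s)=\mathfrak{b}$ and applying Theorem \ref{jt3} will give $j(\mathfrak{a})^{[s,K]}=j(s^{-1}\mathfrak{a})=j(r^{-1}\mathfrak{a})$; class invariance of $j$ on $\mathsf{Cl}_R$ (Proposition \ref{classinv}) then forces the right-hand side to equal $j(\mathfrak{a})$, so every element of $\mathrm{Gal}(H^1_R/H_R)$ fixes $j(\mathfrak{a})$, giving $j(\mathfrak{a})\in H_R$.

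For injectivity, I will start with invertible $R$-ideals $\mathfrak{a},\mathfrak{b}$ with $j(\mathfrak{a})=j(\mathfrak{b})$, which by Lemma \ref{relprimerep} I may take to be prime to the conductor. Choosing $s\in \I^R$ with $(s)=\mathfrak{a}$ and applying Theorem \ref{jt3} twice yields
\[ j(\mathfrak{a}^{-1}\mathfrak{b}) \;=\; j(s^{-1}\mathfrak{b}) \;=\; j(\mathfrak{b})^{[s,K]} \;=\; j(\mathfrak{a})^{[s,K]} \;=\; j(s^{-1}\mathfrak{a}) \;=\; j((1)). \]
The contrapositive of Theorem \ref{jadifffrom1} then forces $\mathfrak{a}^{-1}\mathfrak{b}$ to be principal, i.e.\ $[\mathfrak{a}]=[\mathfrak{b}]$ in $\mathsf{Cl}_R$.

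Finally, the equality $H_R=K(j(\mathfrak{a}))$ will be obtained by a degree count. By Theorem \ref{jt3} and the isomorphism $\mathsf{Cl}_R\cong \mathrm{Gal}(H_R/K)$, the Galois orbit of $j(\mathfrak{a})$ under $\mathrm{Gal}(H_R/K)$ is exactly $\{j(\mathfrak{b}) : [\mathfrak{b}]\in \mathsf{Cl}_R\}$; the injectivity just established makes this orbit have $\#\mathsf{Cl}_R=[H_R:K]$ elements, so $[K(j(\mathfrak{a})):K]\geq [H_R:K]$ and hence equality. The only genuinely hard input is Theorem \ref{jadifffrom1} (already proved), so the main risk of error in the present corollary lies in keeping track of the distinction between $\mathsf{Cl}_R$ and $\mathsf{Cl}^1_R$ and their respective class fields, and in verifying that the Galois equivariance from Theorem \ref{jt3} is compatible with the Artin reciprocity restricted to $H_R$; everything else is bookkeeping.
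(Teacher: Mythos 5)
Your argument is correct and is essentially the argument the paper intends: the paper's proof simply defers to Corollary 1 and Theorem 4 of \cite{DGV}, whose proofs run exactly as you do — use the equivariance $j(\mathfrak{a})^{[s,K]}=j(s^{-1}\mathfrak{a})$ of Theorem \ref{jt3} together with class invariance to descend $j(\mathfrak{a})$ from $H^{1}_{R}$ to the fixed field $H_{R}$ of the kernel of ${\sf Cl}^{1}_{R}\twoheadrightarrow{\sf Cl}_{R}$, deduce injectivity from Theorem \ref{jadifffrom1} applied to an integral representative of $[\mathfrak{a}^{-1}\mathfrak{b}]$, and conclude generation by the orbit--degree count. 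No substantive difference from the paper's (cited) route.
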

\begin{proof}  Identical to the proofs of Corollary 1 and Theorem 4 of \cite{DGV}.  \end{proof}




In what follows, we will need to pass back and forth between ideals of $A$ and their contractions in $R$, so we return to our notation which makes the distinction.

Let $\mathfrak{m}_{R}\subset R$ be an ideal contracted from a non-trivial ideal $\mathfrak{m}\subset A$, and $K_{\mathfrak{m}_{R}}$ the associated narrow ray class field. 
By Class Field Theory, $K_{\mathfrak{m}_{R}}\subset K^{\rm ab}_{\infty}$ is the fixed field of the group of Artin symbols $[s,K]\in {\rm Aut}(K^{\rm ab}_{\infty}/K)$, where
 $s\in K^{\times} \I_{\mathfrak{m}_{R}}$. 
By definition, an analytical isomorphism $\Upsilon:\C_{\infty}/\mathfrak{a}_{R}\rightarrow \D = (\C_{\infty},\uprho )$ of $R$-modules takes the analytical torsion 
\[ {\rm Tor}_{\mathfrak{a}_{R}}(\mathfrak{m}_{R}):= (\mathfrak{m}_{R}\mathfrak{a}_{R}^{-1})^{\ast}/\mathfrak{a}_{R}\] to the algebraic torsion $\D[\mathfrak{m}_{R}]$.  
We will need to identify the image of ${\rm Tor}_{\mathfrak{a}_{R}}(\mathfrak{m}_{R})$ under the isomorphism 
\begin{align}\label{ThetaDef} \Uptheta: K/\mathfrak{a}_{R}\cong   \bigoplus_{\mathfrak{p}\subset A} K_{\mathfrak{p}}/ \mathfrak{a}_{R,\mathfrak{p}}\end{align}
of Theorem \ref{AfModIso}.  If the factorization in $A$ of $\mathfrak{m}$ is $\mathfrak{m}=\prod \mathfrak{p}^{n_{\mathfrak{p}}}$,  then $\mathfrak{m}_{R}$ has the primary
factorization
\[ \mathfrak{m}_{R}=\bigcap  (\mathfrak{p}^{n_{\mathfrak{p}}})_{R}   .\]
We will be interested in the
torsion groups corresponding to the $\mathfrak{p}$-primary factors,  \[ {\rm Tor}_{\mathfrak{a}_{R}}(  (\mathfrak{p}^{n_{\mathfrak{p}}})_{R}),\]
which canonically include into the corresponding summand $K_{\mathfrak{p}}/ \mathfrak{a}_{R,\mathfrak{p}}$ on the right-hand side of (\ref{ThetaDef}).  Accordingly,
the notation ${\rm Tor}_{\mathfrak{a}_{R}}(  (\mathfrak{p}^{n_{\mathfrak{p}}})_{R})$ will also be used to denote the image in $K_{\mathfrak{p}}/ \mathfrak{a}_{R,\mathfrak{p}}$.
\begin{lemm}\label{DirSumTor} $\Uptheta$ takes ${\rm Tor}_{\mathfrak{a}_{R}}(\mathfrak{m}_{R})\subset  K/\mathfrak{a}_{R}$ isomorphically onto
\[\bigoplus_{\mathfrak{p}\supset\mathfrak{m}} {\rm Tor}_{\mathfrak{a}_{R}}(  (\mathfrak{p}^{n_{\mathfrak{p}}})_{R})\]
\end{lemm}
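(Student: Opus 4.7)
The plan is to transport the annihilator description of torsion through the isomorphism $\Uptheta$ of Theorem \ref{AfModIso} and read off the $\mathfrak{p}$-primary contributions using the primary decomposition of $\mathfrak{m}_R$. First, by the invertibility of $\mathfrak{a}_R$, an element $\upbeta\in K$ lies in $(\mathfrak{m}_R\mathfrak{a}_R^{-1})^{\ast}$ iff $\upbeta\mathfrak{m}_R\subset\mathfrak{a}_R$; therefore ${\rm Tor}_{\mathfrak{a}_R}(\mathfrak{m}_R)$ is precisely the $\mathfrak{m}_R$-annihilator submodule $M[\mathfrak{m}_R]$ of $M:=K/\mathfrak{a}_R$, and the same identification applies to each $(\mathfrak{p}^{n_{\mathfrak{p}}})_R$-torsion factor. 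Moreover, contracting the factorization $\mathfrak{m}=\bigcap_{\mathfrak{p}\supset\mathfrak{m}}\mathfrak{p}^{n_{\mathfrak{p}}}$ in the Dedekind domain $A$ yields the primary decomposition
\[
\mathfrak{m}_R\;=\;\bigcap_{\mathfrak{p}\supset\mathfrak{m}}(\mathfrak{p}^{n_{\mathfrak{p}}})_R,
\]
which is a finite intersection since $\mathfrak{m}$ has only finitely many prime divisors in $A$.

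Next, I would translate componentwise. Under $\Uptheta$, a class $(x_{\mathfrak{p}})\in\bigoplus_{\mathfrak{p}} K_{\mathfrak{p}}/\mathfrak{a}_{R,\mathfrak{p}}$ is $\mathfrak{m}_R$-annihilated iff $\mathfrak{m}_R x_{\mathfrak{p}}\subset\mathfrak{a}_{R,\mathfrak{p}}$ for each $\mathfrak{p}$, and continuity of multiplication lets us replace $\mathfrak{m}_R$ by its closure $\mathfrak{m}_R R_{\mathfrak{p}}$ in each completion. So the problem reduces to computing $\mathfrak{m}_R R_{\mathfrak{p}}$ inside $R_{\mathfrak{p}}$. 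For any $\mathfrak{q}|\mathfrak{m}$ with $\mathfrak{q}_R\ne\mathfrak{p}_R$, the primary ideal $(\mathfrak{q}^{n_{\mathfrak{q}}})_R$ has radical $\mathfrak{q}_R\ne\mathfrak{p}_R$; since $R$ is one-dimensional (Corollary \ref{Dim1Noeth}) this primary ideal is not contained in $\mathfrak{p}_R$, so $(\mathfrak{q}^{n_{\mathfrak{q}}})_R R_{\mathfrak{p}}=R_{\mathfrak{p}}$. Because localization is flat and we are dealing with a finite intersection, this argument yields
\[
\mathfrak{m}_R R_{\mathfrak{p}} \;=\; \begin{cases} (\mathfrak{p}^{n_{\mathfrak{p}}})_R R_{\mathfrak{p}} & \text{if } \mathfrak{p}\supset\mathfrak{m},\\ R_{\mathfrak{p}} & \text{if } \mathfrak{p}\not\supset\mathfrak{m}.\end{cases}
\]
The first case recovers the condition $x_{\mathfrak{p}}\in {\rm Tor}_{\mathfrak{a}_R}\bigl((\mathfrak{p}^{n_{\mathfrak{p}}})_R\bigr)$ inside $K_{\mathfrak{p}}/\mathfrak{a}_{R,\mathfrak{p}}$, and the second forces $x_{\mathfrak{p}}=0$; assembling the surviving components proves the Lemma.

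The main obstacle will be the ideal identification in the displayed formula: in the non-Dedekind ring $R$ one only has $\mathfrak{m}_R=\bigcap(\mathfrak{p}^{n_{\mathfrak{p}}})_R$ rather than a multiplicative factorization, so the equality $\mathfrak{m}_R R_{\mathfrak{p}}=(\mathfrak{p}^{n_{\mathfrak{p}}})_R R_{\mathfrak{p}}$ cannot be read off from a product decomposition and must instead be extracted by flat base change applied to the finite intersection. Extra care is needed when $\mathfrak{p}\mid\mathfrak{c}$, where the bijection of Theorem \ref{bijectionprimetoc} is unavailable; there one must verify directly, using Proposition \ref{contractionprop} and the one-dimensionality of $R$, that the only prime $\mathfrak{q}\subset A$ contributing a nontrivial factor to $\mathfrak{m}_R R_{\mathfrak{p}}$ is $\mathfrak{p}$ itself.
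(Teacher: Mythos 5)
Your route is genuinely different from the paper's. The paper never computes anything in the completions: it simply restricts the summation-of-coordinates map $S$ from the proof of Theorem \ref{AfModIso} and re-uses the elements $\upvarepsilon_{\mathfrak{p}}$ with $\sum\upvarepsilon_{\mathfrak{p}}=1$ and $\upvarepsilon_{\mathfrak{p}}\in\bigcap_{\mathfrak{p}\neq\mathfrak{q}\supset\mathfrak{m}}(\mathfrak{q}^{n_{\mathfrak{q}}})_{R}$ produced in the surjectivity argument there, observing that $m\upvarepsilon_{\mathfrak{p}}$ is $(\mathfrak{p}^{n_{\mathfrak{p}}})_{R}$-torsion because $\upvarepsilon_{\mathfrak{p}}(\mathfrak{p}^{n_{\mathfrak{p}}})_{R}\subset\mathfrak{m}_{R}$. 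Your completion-side computation of $\mathfrak{m}_{R}R_{\mathfrak{p}}$ is a legitimate alternative: the identification of ${\rm Tor}$ with annihilator submodules via invertibility of $\mathfrak{a}_{R}$ is correct, and flatness of $R\to R_{(\mathfrak{p}_{R})}\to R_{\mathfrak{p}}$ is available since $R$ is Noetherian (Corollary \ref{Dim1Noeth}), so extension does commute with the finite intersection. For a modulus all of whose prime divisors are prime to $\mathfrak{c}$ your argument is complete, and it is arguably cleaner than the paper's in that case.

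The step you defer at the end, however, is not a routine verification, and the tools you cite cannot deliver it. Your displayed formula $\mathfrak{m}_{R}R_{\mathfrak{p}}=(\mathfrak{p}^{n_{\mathfrak{p}}})_{R}R_{\mathfrak{p}}$ (resp.\ $=R_{\mathfrak{p}}$) needs that no prime divisor $\mathfrak{q}\neq\mathfrak{p}$ of $\mathfrak{m}$ has $\mathfrak{q}_{R}=\mathfrak{p}_{R}$, i.e.\ that the contracted primary components $(\mathfrak{p}^{n_{\mathfrak{p}}})_{R}$ are pairwise comaximal. Away from the conductor this follows from Theorem \ref{bijectionprimetoc}, but for conductor-dividing primes two distinct primes of $A$ can contract to the same prime of $R$: already for the ``nodal'' order $R'=\F_{q}+\mathfrak{p}\mathfrak{q}\subset A$ one has $\mathfrak{p}\cap R'=\mathfrak{q}\cap R'=\mathfrak{p}\mathfrak{q}$. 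In that situation $(\mathfrak{q}^{n_{\mathfrak{q}}})_{R}\subset\mathfrak{p}_{R}$, its extension to $R_{\mathfrak{p}}$ is a proper ideal, and the identification of the $\mathfrak{m}_{R}$-torsion of the summand $K_{\mathfrak{p}}/\mathfrak{a}_{R,\mathfrak{p}}$ with ${\rm Tor}_{\mathfrak{a}_{R}}((\mathfrak{p}^{n_{\mathfrak{p}}})_{R})$ breaks down. Proposition \ref{contractionprop} and one-dimensionality only say that every prime of $R$ is a maximal contraction; they do not make contraction injective on the primes dividing $\mathfrak{c}$. So to close your argument you must either import the comaximality statement that the paper's proof encodes in the $\upvarepsilon_{\mathfrak{p}}$ (equivalently, that distinct prime divisors of $\mathfrak{m}$ have distinct contractions in the situation at hand), or prove that statement separately; as written this is a genuine gap --- though, to be fair, it is exactly the same input that the paper's own treatment of Theorem \ref{AfModIso} and of this Lemma uses tacitly.
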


\begin{proof}  It is enough to observe that the summation-of-coordinates map, used in the proof of Theorem \ref{AfModIso}, defines an isomorphism 
\[ S:\bigoplus_{\mathfrak{p}\supset\mathfrak{m}} {\rm Tor}_{\mathfrak{a}_{R}}(  (\mathfrak{p}^{n_{\mathfrak{p}}})_{R}) \stackrel{\cong}{\longrightarrow} {\rm Tor}_{\mathfrak{a}_{R}}(\mathfrak{m}_{R}) ,
\quad \upmu = (\upmu_{\mathfrak{p}}) \longmapsto \sum\upmu_{\mathfrak{p}} .\]  
Indeed, since $\mathfrak{m}_{R}\subset (\mathfrak{p}^{n_{\mathfrak{p}}})_{R}$, 
\[ {\rm Tor}_{\mathfrak{a}_{R}}(  (\mathfrak{p}^{n_{\mathfrak{p}}})_{R}) \subset  {\rm Tor}_{\mathfrak{a}_{R}}(\mathfrak{m}_{R}),\]
so $S$ defines an inclusion into $ {\rm Tor}_{\mathfrak{a}_{R}}(\mathfrak{m}_{R})$.  Following the notation of the proof of surjectivity of $S$ in Theorem  \ref{AfModIso}: given $m\in  {\rm Tor}_{\mathfrak{a}_{R}}(\mathfrak{m}_{R})$, the unique element mapping onto it is given by 
$\upmu = (\upmu_{\mathfrak{p}})$, which has non-0 coordinates only at $\mathfrak{p}\supset\mathfrak{m}$, with the property that
$\upmu_{\mathfrak{p}}=m\upvarepsilon_{\mathfrak{p}}$, where $\sum \upvarepsilon_{\mathfrak{p}}=1$ and 
\[ \upvarepsilon_{\mathfrak{p}}\in  \bigcap_{\mathfrak{p} \not=\mathfrak{q}\supset \mathfrak{m}} (\mathfrak{q}^{n_{\mathfrak{q}}} )_{R}.\]
But then it follows that
\[  \upmu_{\mathfrak{p}}(\mathfrak{p}^{n_{\mathfrak{p}}})_{R}=   m\upvarepsilon_{\mathfrak{p}}   (\mathfrak{p}^{n_{\mathfrak{p}}})_{R}  \in m \mathfrak{m}_{R} \subset\mathfrak{a}_{R} ,\]
so $\upmu_{\mathfrak{p}} \in {\rm Tor}_{\mathfrak{a}_{R}}(  (\mathfrak{p}^{n_{\mathfrak{p}}})_{R})$, as asserted.  
\end{proof}


In what follows we will assume that $\mathfrak{m}\subset A$ is an extension of an ideal $\mathfrak{n}\subset R$ i.e. 
\[  A\mathfrak{n} = \mathfrak{m} .\]
In this connection, we recall that the set of contractions of ideals in $A$ to $R$ is in bijection with the set of extensions  of ideals of $R$ to $A$,
\[  C_{R} := \{ \mathfrak{m}_{R}\; : \;\; \mathfrak{m}\subset A\}  \longleftrightarrow  E_{A} := \{ A\mathfrak{n} \; :\;\; \mathfrak{n}\subset R\};\]
the bijection is given by the contraction and extension operations.  See Proposition 1.17 of \cite{AM}.  Note that
every ideal  $\mathfrak{m}\subset A$ contains an element of $E_{A}$: since $A\mathfrak{m}_{R}\subset \mathfrak{m}$.  Thus, at the level of associated ray class fields, we have
\[ K_{\mathfrak{m}} \subset K_{A\mathfrak{m}_{R}}.\] 

\begin{prop}\label{primarysalsoexentions} If $\mathfrak{m}\in E_{A}$ with prime factorization $\mathfrak{m} = \prod \mathfrak{p}^{n_{\mathfrak{p}}}$, then $\mathfrak{p}^{n_{\mathfrak{p}}}\in E_{A}$.
\end{prop}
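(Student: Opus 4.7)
The plan is to show that, setting $\mathfrak{n}_{\mathfrak{p}}:=\mathfrak{p}^{n_{\mathfrak{p}}}\cap R$, we have $A\mathfrak{n}_{\mathfrak{p}}=\mathfrak{p}^{n_{\mathfrak{p}}}$. By the bijection between $C_{R}$ and $E_{A}$, this exactly says $\mathfrak{p}^{n_{\mathfrak{p}}}\in E_{A}$. First, I would reduce to the case $\mathfrak{n}=\mathfrak{m}\cap R$, which does not change $A\mathfrak{n}=\mathfrak{m}$ but places $\mathfrak{n}$ in $C_{R}$. Since $\mathfrak{n}\subset\mathfrak{m}\subset\mathfrak{p}^{n_{\mathfrak{p}}}$, we then have $\mathfrak{n}\subset\mathfrak{n}_{\mathfrak{p}}$, whence $\mathfrak{m}=A\mathfrak{n}\subset A\mathfrak{n}_{\mathfrak{p}}\subset\mathfrak{p}^{n_{\mathfrak{p}}}$. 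Using unique factorization in the Dedekind domain $A$, it suffices to check that $A\mathfrak{n}_{\mathfrak{p}}$ and $\mathfrak{p}^{n_{\mathfrak{p}}}$ have the same image in the localization $A_{\mathfrak{r}}$ for every prime $\mathfrak{r}\subset A$.

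At $\mathfrak{r}=\mathfrak{p}$ this is immediate: localization kills all other prime components of $\mathfrak{m}$, so $\mathfrak{n}A_{\mathfrak{p}}=\mathfrak{m}A_{\mathfrak{p}}=\mathfrak{p}^{n_{\mathfrak{p}}}A_{\mathfrak{p}}$, and the inclusions $\mathfrak{n}\subset\mathfrak{n}_{\mathfrak{p}}\subset\mathfrak{p}^{n_{\mathfrak{p}}}$ force the same at $\mathfrak{p}$. For $\mathfrak{r}\neq\mathfrak{p}$ one needs $\mathfrak{n}_{\mathfrak{p}}A_{\mathfrak{r}}=A_{\mathfrak{r}}$, equivalently $\mathfrak{n}_{\mathfrak{p}}\not\subset\mathfrak{r}$. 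I would split this into three subcases. If $\mathfrak{r}\nmid\mathfrak{m}$, then $\mathfrak{n}\not\subset\mathfrak{r}$ (since $A\mathfrak{n}=\mathfrak{m}$), and $\mathfrak{n}\subset\mathfrak{n}_{\mathfrak{p}}$ finishes this case. If $\mathfrak{r}\mid\mathfrak{m}$ but $\mathfrak{r}\nmid\mathfrak{c}$, then the conductor trick works: pick any $y\in\mathfrak{p}^{n_{\mathfrak{p}}}\setminus\mathfrak{r}$ (available by coprimality of $\mathfrak{p}^{n_{\mathfrak{p}}}$ and $\mathfrak{r}$ in $A$) and $c\in\mathfrak{c}\setminus\mathfrak{r}$; then $cy\in\mathfrak{c}\,\mathfrak{p}^{n_{\mathfrak{p}}}\subset R\cap\mathfrak{p}^{n_{\mathfrak{p}}}=\mathfrak{n}_{\mathfrak{p}}$ while primeness of $\mathfrak{r}$ forces $cy\notin\mathfrak{r}$.

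The main obstacle is the remaining subcase, where $\mathfrak{r}\mid\mathfrak{m}$ \emph{and} $\mathfrak{r}\mid\mathfrak{c}$. Here $\mathfrak{c}\subset\mathfrak{r}$, so the naive multiplication by a conductor element falls into $\mathfrak{r}$; and every element of $\mathfrak{n}\subset\mathfrak{m}$ already lies in $\mathfrak{r}$, so the direct inclusion $\mathfrak{n}\subset\mathfrak{n}_{\mathfrak{p}}$ is useless. To handle this, I would invoke the strong approximation furnished by Proposition \ref{CompletionProp} and the density Proposition following it: since $R$ is dense in $\hat{A}_{\mathfrak{p}}$ and in $\hat{A}_{\mathfrak{r}}$, a weak-approximation argument (using that $K$ is dense in a finite product of its completions, then clearing denominators against $\mathfrak{c}$ at the places where $\mathfrak{c}$ is a unit) produces an element $s\in R$ satisfying $v_{\mathfrak{p}}(s)\geq n_{\mathfrak{p}}$, $v_{\mathfrak{r}}(s)=0$, and $v_{\mathfrak{q}}(s)\geq 0$ at the remaining finite primes. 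Such $s$ lies in $\mathfrak{n}_{\mathfrak{p}}\setminus\mathfrak{r}$.

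Once $\mathfrak{n}_{\mathfrak{p}}\not\subset\mathfrak{r}$ has been established for every $\mathfrak{r}\neq\mathfrak{p}$, the local comparison yields $A\mathfrak{n}_{\mathfrak{p}}A_{\mathfrak{r}}=\mathfrak{p}^{n_{\mathfrak{p}}}A_{\mathfrak{r}}$ for all $\mathfrak{r}$, hence $A\mathfrak{n}_{\mathfrak{p}}=\mathfrak{p}^{n_{\mathfrak{p}}}$ in $A$. I expect the main technical difficulty to be the clean execution of the simultaneous density/weak-approximation step in the presence of primes of $A$ above the same prime of $R$ (which can happen only among divisors of $\mathfrak{c}$); all other pieces of the argument are straightforward localization and the two tricks outlined above.
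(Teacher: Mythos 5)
Your strategy---reduce to showing $A\mathfrak{n}_{\mathfrak{p}}=\mathfrak{p}^{n_{\mathfrak{p}}}$ for $\mathfrak{n}_{\mathfrak{p}}=\mathfrak{p}^{n_{\mathfrak{p}}}\cap R$ and verify this prime by prime in $A$---is genuinely different from the paper's, which contracts the factorization to the primary decomposition $\mathfrak{m}_{R}=\bigcap(\mathfrak{p}^{n_{\mathfrak{p}}})_{R}$, sandwiches $\mathfrak{m}\subset\bigcap A(\mathfrak{p}^{n_{\mathfrak{p}}})_{R}\subset\bigcap\mathfrak{p}^{n_{\mathfrak{p}}}=\mathfrak{m}$, and then appeals to unique factorization in $A$. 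Your $\mathfrak{p}$-local comparison and your first two subcases are correct. However, the step you yourself flag as the main obstacle is a genuine gap, and it cannot be closed by approximation: Proposition \ref{CompletionProp} and the density statement following it give density of $R$ in $R_{\mathfrak{q}}$, the closure of the \emph{localization of $R$}, not in $A_{\mathfrak{q}}$, and they impose no independence between conditions at two distinct primes of $A$ lying over one and the same prime of $R$. Such collisions can only occur among primes dividing $\mathfrak{c}$, i.e., exactly in your last subcase. If $\mathfrak{r}\neq\mathfrak{p}$ but $\mathfrak{r}\cap R=\mathfrak{p}\cap R$, then $\mathfrak{n}_{\mathfrak{p}}\subset\mathfrak{p}\cap R=\mathfrak{r}\cap R\subset\mathfrak{r}$, so no $s\in R$ with $v_{\mathfrak{p}}(s)\geq n_{\mathfrak{p}}$ and $v_{\mathfrak{r}}(s)=0$ exists: the element you want to approximate is simply not there.

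This is not a repairable technicality, because your reduction is faithful and the assertion fails in that situation. Take $A=\F_{q}[T]$ and the order $R=\{h\in\F_{q}[T]\,:\,h(0)=h(1)\}$, whose conductor is $\mathfrak{c}=(T^{2}-T)$; the primes $\mathfrak{p}=(T)$ and $\mathfrak{r}=(T-1)$ contract to the same maximal ideal of $R$. Then $\mathfrak{m}=(T^{2}-T)=A\cdot(T^{2}-T)R\in E_{A}$ has factorization $\mathfrak{p}\mathfrak{r}$, yet $\mathfrak{p}\notin E_{A}$: any $R$-ideal $\mathfrak{n}$ with $A\mathfrak{n}=\mathfrak{p}$ satisfies $\mathfrak{n}\subset\mathfrak{p}\cap R=\mathfrak{r}\cap R\subset\mathfrak{r}$, forcing $\mathfrak{p}\subset\mathfrak{r}$, a contradiction. (The paper's argument does not detect this because equality of the intersections $\bigcap A(\mathfrak{p}^{n_{\mathfrak{p}}})_{R}=\prod\mathfrak{p}^{n_{\mathfrak{p}}}$ does not force termwise equality; in this example $A\big((T)\cap R\big)=(T^{2}-T)\subsetneq(T)$.) What your approach does establish is the Proposition under the additional hypothesis that every prime of $A$ dividing both $\mathfrak{m}$ and $\mathfrak{c}$ is the unique prime of $A$ above its contraction to $R$; and in that case you need no approximation at all in the last subcase, since $\mathfrak{n}_{\mathfrak{p}}$ has radical $\mathfrak{p}\cap R$ in $R$ and hence cannot be contained in the distinct maximal ideal $\mathfrak{r}\cap R$, giving $\mathfrak{n}_{\mathfrak{p}}\not\subset\mathfrak{r}$ directly.
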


\begin{proof}  For $\mathfrak{n}_{1},\mathfrak{n}_{2}\subset R$, $A(\mathfrak{n}_{1}\cap \mathfrak{n}_{2})\subset (A\mathfrak{n}_{1})\cap (A\mathfrak{n}_{2})$ (\cite{AM}, Exercise 1.18).  It follows that
\[ \mathfrak{m} = A\mathfrak{m}_{R} \subset \bigcap A(\mathfrak{p}^{n_{\mathfrak{p}}} )_{R} \subset \bigcap \mathfrak{p}^{n_{\mathfrak{p}}} =\mathfrak{m}.\]  Hence
\[ \bigcap A(\mathfrak{p}^{n_{\mathfrak{p}}} )_{R} = \prod \mathfrak{p}^{n_{\mathfrak{p}}} .\]
But for each $\mathfrak{p}$, $A(\mathfrak{p}^{n_{\mathfrak{p}}} )_{R}\subset \mathfrak{p}^{n_{\mathfrak{p}}} $,
so by uniqueness of prime factorization in $A$, we must have $A(\mathfrak{p}^{n_{\mathfrak{p}}} )_{R} = \mathfrak{p}^{n_{\mathfrak{p}}} $.
\end{proof}


\begin{lemm}\label{torlem} Suppose that $s\in \I^{R}$ is such that $s\mathfrak{a}_{R}=\mathfrak{a}_{R}$.  Fix $\mathfrak{m}\in E_{A}$.  If  $s\in \I_{\mathfrak{m}_{R}}$ then $s$ acts as the identity on 
${\rm Tor}_{\mathfrak{a}_{R}}(\mathfrak{m}_{R})$.   If in addition, $\mathfrak{m}$ is either prime to $\mathfrak{c}$ or contained in $\mathfrak{c}$, then the converse is true as well.
\end{lemm}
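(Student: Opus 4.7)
By Theorem \ref{AfModIso}, the isomorphism $\Uptheta: K/\mathfrak{a}_R\cong \bigoplus_{\mathfrak{p}\subset A}K_{\mathfrak{p}}/\mathfrak{a}_{R,\mathfrak{p}}$ intertwines the action of $s$ with componentwise multiplication by $s_{\mathfrak{p}}$, and Lemma \ref{DirSumTor} identifies ${\rm Tor}_{\mathfrak{a}_R}(\mathfrak{m}_R)$ with $\bigoplus_{\mathfrak{p}\supset\mathfrak{m}}{\rm Tor}_{\mathfrak{a}_R}((\mathfrak{p}^{n_{\mathfrak{p}}})_R)$ inside the direct sum. Thus $s$ acts trivially on ${\rm Tor}_{\mathfrak{a}_R}(\mathfrak{m}_R)$ if and only if, for every $\mathfrak{p}\supset\mathfrak{m}$ and every representative $\beta\in K$ with $\beta(\mathfrak{p}^{n_{\mathfrak{p}}})_R\subset\mathfrak{a}_R$, one has $(s_{\mathfrak{p}}-1)\beta\in\mathfrak{a}_{R,\mathfrak{p}}$. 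The plan is to reduce both directions to this prime-by-prime condition and then to translate it into $\mathfrak{p}$-adic inequalities on $s_{\mathfrak{p}}$, using the fact that coprimality of $\mathfrak{a}_R$ to $\mathfrak{c}$ forces $\mathfrak{a}_{R,\mathfrak{p}}=R_{\mathfrak{p}}$ whenever $\mathfrak{p}\mid\mathfrak{c}$.

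For the forward direction, fix $\mathfrak{p}\supset\mathfrak{m}$ and such a $\beta$, and note that $s_{\mathfrak{p}}-1\in \mathfrak{p}^{n_{\mathfrak{p}}}_{\mathfrak{p}}\cap R_{\mathfrak{p}}$. By density of $R$ in $R_{\mathfrak{p}}$, pick a sequence $r_i\in R$ with $r_i\to s_{\mathfrak{p}}-1$; for large $i$ one has $r_i\in \mathfrak{p}^{n_{\mathfrak{p}}}_{\mathfrak{p}}\cap R=(\mathfrak{p}^{n_{\mathfrak{p}}})_R$, so $\beta r_i\in\mathfrak{a}_R\subset\mathfrak{a}_{R,\mathfrak{p}}$. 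Passing to the limit in $K_{\mathfrak{p}}$ (using $\mathfrak{p}$-adic continuity of multiplication by $\beta$ and closedness of $\mathfrak{a}_{R,\mathfrak{p}}$) yields $\beta(s_{\mathfrak{p}}-1)\in\mathfrak{a}_{R,\mathfrak{p}}$, as required.

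For the converse I would produce, in each primary component, an explicit test element whose being fixed by $s_{\mathfrak{p}}$ detects membership in $U_{\mathfrak{p},R}^{(n_{\mathfrak{p}})}$. If $\mathfrak{p}\nmid\mathfrak{c}$, then $R_{\mathfrak{p}}=A_{\mathfrak{p}}$ is a DVR by Proposition \ref{WhenLocalEqual} and $\mathfrak{a}_{R,\mathfrak{p}}=\uppi^{v}R_{\mathfrak{p}}$ for a uniformizer $\uppi\in K$ at $\mathfrak{p}$ and $v={\rm ord}_{\mathfrak{p}}(\mathfrak{a}_R)$; then $\beta=\uppi^{v-n_{\mathfrak{p}}}$ represents a class of ${\rm Tor}_{\mathfrak{a}_R}((\mathfrak{p}^{n_{\mathfrak{p}}})_R)$, and $(s_{\mathfrak{p}}-1)\beta\in\mathfrak{a}_{R,\mathfrak{p}}$ reads directly as ${\rm ord}_{\mathfrak{p}}(s_{\mathfrak{p}}-1)\geq n_{\mathfrak{p}}$. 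If $\mathfrak{p}\mid\mathfrak{c}$, the hypothesis $\mathfrak{m}\subset\mathfrak{c}$ forces $n_{\mathfrak{p}}\geq e_{\mathfrak{p}}$ (writing $\mathfrak{c}_{\mathfrak{p}}=\mathfrak{p}^{e_{\mathfrak{p}}}_{\mathfrak{p}}$), so $\mathfrak{p}^{n_{\mathfrak{p}}}_{\mathfrak{p}}\subset R_{\mathfrak{p}}=\mathfrak{a}_{R,\mathfrak{p}}$, and the same choice $\beta=\uppi^{-n_{\mathfrak{p}}}$ works.

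I expect the main obstacle to be the converse at primes dividing the conductor, where the contracted ideal $(\mathfrak{p}^{n_{\mathfrak{p}}})_R$ and the local ideal $\mathfrak{p}^{n_{\mathfrak{p}}}_{\mathfrak{p}}$ can genuinely differ; the dichotomy imposed on $\mathfrak{m}$ is precisely what excludes the intermediate range $0<n_{\mathfrak{p}}<e_{\mathfrak{p}}$ at such primes, where no clean test element in $K$ is available. The infinite-place constraint $s_{\infty}\in(K_{\infty}^{\times})_{+}$ in the definition of $\I_{\mathfrak{m}_R}$ is invisible to the torsion action and must be treated as a separate condition; this is how it is handled in the application to Theorem \ref{Sh}, via the rewriting $s\uppi^{-1}=\upalpha u$, where $\upalpha\in K^{\times}$ absorbs the sign obstruction.
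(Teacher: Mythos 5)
Your proof follows essentially the same route as the paper's: the forward direction by approximating $s_{\mathfrak{p}}-1$ with elements of $(\mathfrak{p}^{n_{\mathfrak{p}}})_{R}$ (density plus continuity), and the converse prime-by-prime via the local identification of the $\mathfrak{p}$-primary torsion, using that coprimality of $\mathfrak{a}_{R}$ to $\mathfrak{c}$ gives $\mathfrak{a}_{R,\mathfrak{p}}=R_{\mathfrak{p}}$ at conductor primes and that the dichotomy on $\mathfrak{m}$ forces either $n_{\mathfrak{p}}=0$ or $n_{\mathfrak{p}}\geq e_{\mathfrak{p}}$ there, exactly as in the paper. Your closing observation about the sign condition at $\infty$ being invisible to the torsion action is consistent with the paper, whose proof likewise only verifies the finite components and absorbs the infinite place in the application via the factor $K^{\times}$.
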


\begin{proof}  Recall (see the discussion preceding Lemma \ref{DirSumTor}) that we may identify \[  {\rm Tor}_{\mathfrak{a}_{R}}(  (\mathfrak{p}^{n_{\mathfrak{p}}})_{R})\hookrightarrow 
K_{\mathfrak{p}}/\mathfrak{a}_{R,\mathfrak{p}}.\]  The hypothesis $s\mathfrak{a}_{R} = \mathfrak{a}_{R}$ implies for all $\mathfrak{p}$ that $s_{\mathfrak{p}}\mathfrak{a}_{R,\mathfrak{p}} =\mathfrak{a}_{R,\mathfrak{p}} $, whence
$s_{\mathfrak{p}}$ acts on each factor module $K_{\mathfrak{p}}/ \mathfrak{a}_{R,\mathfrak{p}}$, and in particular, the question as to its effect on $ {\rm Tor}_{\mathfrak{a}_{R}}(  (\mathfrak{p}^{n_{\mathfrak{p}}})_{R})$
is well-posed.

 Suppose $s=(s_{\mathfrak{p}})\in \I_{\mathfrak{m}_{R}}
$. 
Then for all $\mathfrak{p}\subset A$ prime, we claim that 
\begin{align}\label{killclaim} (s_{\mathfrak{p}}-1) {\rm Tor}_{\mathfrak{a}_{R}}(  (\mathfrak{p}^{n_{\mathfrak{p}}})_{R})
 \subset   \mathfrak{a}_{R, \mathfrak{p}}.\end{align}
 Indeed,  $s_{\mathfrak{p}} \in U_{\mathfrak{p}}^{(n_{\mathfrak{p}})}\cap R^{\times}_{\mathfrak{p}}$
 implies \[ s_{\mathfrak{p}} - 1\;\in\; \mathfrak{p}^{n_{\mathfrak{p}}}_{\mathfrak{p}}\cap R_{\mathfrak{p}}=(\mathfrak{p}_{\mathfrak{p}}^{n_{\mathfrak{p}}})_{R_{\mathfrak{p}}} =
  (\mathfrak{p}^{n_{\mathfrak{p}}})_{R, \mathfrak{p}},\] 
  where $(\mathfrak{p}_{\mathfrak{p}}^{n_{\mathfrak{p}}})_{R_{\mathfrak{p}}} $ denotes the contraction of  $\mathfrak{p}_{\mathfrak{p}}^{n_{\mathfrak{p}}}$
 to $R_{\mathfrak{p}}$.
   As $R_{\mathfrak{p}}$ 
 is an open subring of $K_{\mathfrak{p}}$ (since it contains $\mathfrak{c}_{\mathfrak{p}}$), it follows that $(\mathfrak{p}^{n_{\mathfrak{p}}})_{R}$ is dense in 
 $(\mathfrak{p}^{n_{\mathfrak{p}}})_{R, \mathfrak{p}}$.
 But by definition, 
 \begin{align}\label{densekillclaim}   (\mathfrak{p}^{n_{\mathfrak{p}}})_{R}\cdot {\rm Tor}_{\mathfrak{a}_{R}}(  (\mathfrak{p}^{n_{\mathfrak{p}}})_{R}) \subset \mathfrak{a}_{R},\end{align}
and since  $s_{\mathfrak{p}}-1$ is approximated by elements of  $(\mathfrak{p}^{n_{\mathfrak{p}}})_{R}$, 
 the claim (\ref{killclaim}) follows from (\ref{densekillclaim}).
 In particular, $s_{\mathfrak{p}}$ acts as the identity on  ${\rm Tor}_{\mathfrak{a}_{R}}(  (\mathfrak{p}^{n_{\mathfrak{p}}})_{R})$;
 by Lemma \ref{DirSumTor}, it follows that $s$ acts as the identity on ${\rm Tor}_{\mathfrak{a}_{R}}(\mathfrak{m}_{R})$.
 
 Now suppose that either  $\mathfrak{m}$ is  prime to $\mathfrak{c}$ or contained in $\mathfrak{c}$; we will show the converse of the statement we have just proved.
Consider first the case where $\mathfrak{c}\supset\mathfrak{m}$ and
suppose that $\mathfrak{p} | \mathfrak{c}$.  In particular, $\mathfrak{p}|\mathfrak{m}$, and after localizing,
\[ \mathfrak{p}_{\mathfrak{p}}^{n_{\mathfrak{p}} }\subset \mathfrak{c}_{\mathfrak{p}} \subset R_{\mathfrak{p}} ,\]
hence
\[     U^{ ( n_{\mathfrak{p}})}_{\mathfrak{p}} = 1 + \mathfrak{p}_{\mathfrak{p}}^{n_{\mathfrak{p}} }\subset R_{\mathfrak{p}}^{\times}  \]
 Since $s$ is prime to $\mathfrak{c}$, that is,  $s\in \I^{R}$, $s_{\mathfrak{p}}\in R_{\mathfrak{p}}^{\times}$, 
and so what remains is to show that $s_{\mathfrak{p}}\in U^{ ( n_{\mathfrak{p}})}_{\mathfrak{p}}\subset R_{\mathfrak{p}}^{\times}$.  In this case $\mathfrak{p}\nmid \mathfrak{a}$ and thus  $\mathfrak{a}_{\mathfrak{p}} = A_{\mathfrak{p}}$
and  $\mathfrak{a}_{R,\mathfrak{p}} = R_{\mathfrak{p}}$.  Additionally, since $\mathfrak{p}^{n_{\mathfrak{p}}}_{\mathfrak{p}}\subset \mathfrak{c}_{\mathfrak{p}}$, we have $\mathfrak{p}^{n_{\mathfrak{p}}}_{R,\mathfrak{p}}=
\mathfrak{p}^{n_{\mathfrak{p}}}_{\mathfrak{p}}$ and is moreover principal, generated by $\uppi^{n_{\mathfrak{p}}}$ if $\mathfrak{p}_{\mathfrak{p}} = \uppi A_{\mathfrak{p}}$.  In particular, $\mathfrak{p}^{n_{\mathfrak{p}}}_{\mathfrak{p}}$ 
is now invertible as an $R_{\mathfrak{p}}$-ideal, with inverse 
$
\uppi^{-n_{\mathfrak{p}}} R_{\mathfrak{p}}$.
 It follows that 
we may identify $ {\rm Tor}_{\mathfrak{a}_{R}}(  (\mathfrak{p}^{n_{\mathfrak{p}}})_{R})
$ in $K_{\mathfrak{p}}/\mathfrak{a}_{R,\mathfrak{p}}  =K_{\mathfrak{p}}/R_{\mathfrak{p}} $ with 
$
 \uppi^{-n_{\mathfrak{p}}} R_{\mathfrak{p}}/R_{\mathfrak{p}}$.  Now
\begin{align}\label{containmentofstars} 
\uppi^{-n_{\mathfrak{p}}}R_{\mathfrak{p}} \subset \uppi^{-n_{\mathfrak{p}}}A_{\mathfrak{p}}= \mathfrak{p}_{\mathfrak{p}}^{-n_{\mathfrak{p}}} : \end{align}
From this we conclude that the hypothesis that $s$ acts as the identity on ${\rm Tor}_{\mathfrak{a}_{R}}(\mathfrak{m}_{R})$ implies, by Lemma \ref{DirSumTor},
\[  (s_{\mathfrak{p}}-1) 
  \uppi^{-n_{\mathfrak{p}}} R_{\mathfrak{p}}\subset \mathfrak{a}_{R,\mathfrak{p}} =R_{\mathfrak{p}} \]
which in turn gives, after multiplying the above inclusion by $A_{\mathfrak{p}}$,
\[  (s_{\mathfrak{p}}-1)\mathfrak{p}^{-n_{\mathfrak{p}}}_{\mathfrak{p}}\subset A_{\mathfrak{p}} \]
i.e. $s_{\mathfrak{p}}\in U^{ ( n_{\mathfrak{p}})}_{\mathfrak{p}}$.
Now assume that $\mathfrak{p}| \mathfrak{m}$ but $\mathfrak{p} \nmid \mathfrak{c}$.  Then
$A_{\mathfrak{p}}=R_{\mathfrak{p}}$ and hence $\mathfrak{a}_{R,\mathfrak{p}}=\mathfrak{a}_{\mathfrak{p}}=\mathfrak{p}_{\mathfrak{p}}^{m} $ for some $m\geq 0$.
Moreover, $\mathfrak{p}^{n_{\mathfrak{p}}}_{R,\mathfrak{p}} = \mathfrak{p}^{n_{\mathfrak{p}}}_{\mathfrak{p}}$ is invertible
as an $R_{\mathfrak{p}}$ ideal, which allows us to identify 
${\rm Tor}_{\mathfrak{a}_{R}}(  (\mathfrak{p}^{n_{\mathfrak{p}}})_{R})$  with $\mathfrak{p}_{\mathfrak{p}}^{-n_{\mathfrak{p}}}/ \mathfrak{p}_{\mathfrak{p}}^{m} $.
Since, by hypothesis, $s_{\mathfrak{p}}$ acts as the identity on the latter, $s_{\mathfrak{p}} -1\in \mathfrak{p}_{\mathfrak{p}}^{m+n_{\mathfrak{p}}}  $ which implies
$s_{\mathfrak{p}} \in U^{ ( m+n_{\mathfrak{p}})}_{\mathfrak{p}} \subset U^{ ( n_{\mathfrak{p}})}_{\mathfrak{p}}$.  
If $\mathfrak{p}\nmid \mathfrak{c}, \mathfrak{m}$, no restrictions are imposed (i.e.\ $n_{\mathfrak{p}} =0$), and since $s\in \I^{R}$, we have trivially $s_{\mathfrak{p}} \in R_{\mathfrak{p}}^{\times} = A_{\mathfrak{p}}^{\times} = 
U^{ ( 0)}_{\mathfrak{p},R}$.
 This finishes the proof of the converse for $\mathfrak{m}\subset\mathfrak{p}$.

Finally, we prove the converse when $(\mathfrak{m},\mathfrak{c})=1$.  
If $\mathfrak{p} |  \mathfrak{c}$ then $\mathfrak{p} \nmid \mathfrak{m}$ and $n_{\mathfrak{p}}=0$.   There is then nothing to show since, by the overall hypothesis that $s\in \I^{R}$, we have immediately 
$s_{\mathfrak{p}}\in R_{\mathfrak{p}}^{\times} = U^{ ( 0)}_{\mathfrak{p},R}$.  On the other hand, if  $\mathfrak{p} |  \mathfrak{m}$ then $\mathfrak{p} \nmid \mathfrak{c}$, the proof is the same as that
in the previous paragraph, where we were assuming $\mathfrak{m}\subset\mathfrak{c}$.
The case where $\mathfrak{p}\nmid \mathfrak{c}, \mathfrak{m}$ is also handled exactly as in the previous paragraph.  This finishes the proof of the converse statement when $(\mathfrak{m},\mathfrak{c})=1$.
\end{proof}

 Let $\D =(\C_{\infty},\uprho )$ be a Drinfeld module defined over the minimal field of definition $H_{R}$ (= the Hilbert class field) and let $e_{\uprho}$ be the exponential inducing an isomorphism
 \[   e_{\uprho}: \C_{\infty}/\Uplambda_{\uprho}\longrightarrow \D.  \]
 Then there exists $\mathfrak{a}\subset R$ an ideal and $\upxi\in \C_{\infty}$ so that $\Uplambda_{\uprho}= \upxi \mathfrak{a}$.  We point out that $\D$ is in general
 {\it not} sign normalized, unless $d_{\infty}=1$.

\begin{theo} Let $\D$ be as in the previous paragraph, $\mathfrak{m}\in E_{A}$ a modulus which is either prime to $\mathfrak{c}$ or contained in $\mathfrak{c}$. Then
\[  K_{\mathfrak{m}_{R}} = H_{R}\big(e_{\uprho} (\upxi t)|\; t\in {\rm Tor}_{\mathfrak{a}_{R}}(\mathfrak{m}_{R}) \big) .\]
\end{theo}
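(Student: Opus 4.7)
Denote the field on the right-hand side by $F$, and write $\Upsilon:\C_{\infty}/\mathfrak{a}_{R}\rightarrow \D$ for the analytic uniformization so that $\Upsilon(t)=e_{\uprho}(\upxi t)$; thus $F=H_{R}(\Upsilon(t)\;|\;t\in {\rm Tor}_{\mathfrak{a}_{R}}(\mathfrak{m}_{R}))$. The plan is to establish the two inclusions $F\subseteq K_{\mathfrak{m}_{R}}$ and $K_{\mathfrak{m}_{R}}\subseteq F$ using the Main Theorem for Complex Multiplication over orders (Theorem \ref{Sh}) together with the two directions of Lemma \ref{torlem}.

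For $F\subseteq K_{\mathfrak{m}_{R}}$, I take $\upsigma$ in the absolute Galois group of $K$ whose restriction to $K^{\rm ab}_{\infty}$ fixes $K_{\mathfrak{m}_{R}}$. Via reciprocity, $\upsigma|_{K^{\rm ab}_{\infty}}=[s,K]$ for some $s\in \I_{\mathfrak{m}_{R}}$ after a $K^{\times}$-adjustment. Since $\I_{\mathfrak{m}_{R}}\subseteq \I^{1}_{R}$, $\upsigma$ fixes both $H_{R}$ and $H^{1}_{R}$; in particular $\D^{\upsigma}=\D$ because $\D$ is defined over $H_{R}$, and moreover each $s_{\mathfrak{p}}\in R^{\times}_{\mathfrak{p}}$ for finite $\mathfrak{p}$ forces the $R$-ideal $(s)$ to be $R$, giving $s^{-1}\mathfrak{a}_{R}=\mathfrak{a}_{R}$. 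Theorem \ref{Sh} then produces an analytic isomorphism $\Upsilon':\C_{\infty}/\mathfrak{a}_{R}\rightarrow \D$ satisfying $\Upsilon(t)^{\upsigma}=\Upsilon'(s^{-1}t)$ for all $t\in K/\mathfrak{a}_{R}$. The forward direction of Lemma \ref{torlem} gives $s^{-1}t\equiv t\pmod{\mathfrak{a}_{R}}$ for $t\in{\rm Tor}_{\mathfrak{a}_{R}}(\mathfrak{m}_{R})$, whence $\Upsilon(t)^{\upsigma}=\Upsilon'(t)$. Since $\Upsilon$ and $\Upsilon'$ are two analytic $R$-module isomorphisms $\C_{\infty}/\mathfrak{a}_{R}\rightarrow \D$, they differ by an automorphism of $\D$, i.e., $\Upsilon'=c\Upsilon$ for some $c\in \F_{q}^{\times}$. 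Using the uniqueness of $\Upsilon'|_{K_{\infty}}$ from Theorem \ref{Sh} together with the fact that $\upsigma$ fixes $H^{1}_{R}$, which is precisely the field over which the sign normalization of $\D$ is encoded, one concludes $c=1$, so $\Upsilon(t)^{\upsigma}=\Upsilon(t)$ for every torsion $t$ and $\upsigma$ fixes $F$ pointwise.

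For the converse $K_{\mathfrak{m}_{R}}\subseteq F$, I take $\upsigma\in {\rm Gal}(K^{\rm ab}_{\infty}/F)$ and aim to show $\upsigma\in {\rm Gal}(K^{\rm ab}_{\infty}/K_{\mathfrak{m}_{R}})$. Since $F\supseteq H_{R}$, we may choose a representative $s\in \I_{R}\subset \I^{R}$ of $\upsigma|_{K^{\rm ab}_{\infty}}$ modulo $K^{\times}$; then $(s)=R$ and $s^{-1}\mathfrak{a}_{R}=\mathfrak{a}_{R}$ automatically. The fact that $F$ contains $H^{1}_{R}$ (which follows because the torsion values $\Upsilon(t)$, via their interplay with the coefficients of $e_{\uprho}$, already encode the sign normalization, cf.\ the Corollary following Theorem \ref{normalizingfield}) then allows us to apply the same scalar-normalization argument as above, so that Theorem \ref{Sh} yields $\Upsilon(t)^{\upsigma}=\Upsilon(s^{-1}t)$ for every $t\in K/\mathfrak{a}_{R}$. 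The hypothesis $\upsigma\in {\rm Gal}(K^{\rm ab}_{\infty}/F)$ translates via the injectivity of $\Upsilon$ on ${\rm Tor}_{\mathfrak{a}_{R}}(\mathfrak{m}_{R})$ to $s^{-1}t\equiv t\pmod{\mathfrak{a}_{R}}$, i.e., $s$ acts trivially on this torsion submodule. At this point the hypothesis on $\mathfrak{m}$, namely that it is either prime to $\mathfrak{c}$ or contained in $\mathfrak{c}$, is used to invoke the converse direction of Lemma \ref{torlem}, yielding $s\in \I_{\mathfrak{m}_{R}}$, hence $\upsigma$ fixes $K_{\mathfrak{m}_{R}}$.

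The main obstacle is controlling the scalar $c\in \F_{q}^{\times}$ relating $\Upsilon'$ to $\Upsilon$: Theorem \ref{Sh} asserts uniqueness only after restriction to $K_{\infty}$, and the analytic $R$-module automorphisms of $\C_{\infty}/\mathfrak{a}_{R}$ are exactly the $\F_{q}^{\times}$-scalings, so an a priori scalar appears in each direction of the proof. Forcing $c=1$ requires the additional information that $\upsigma$ fixes the narrow Hilbert class field $H^{1}_{R}$, not merely $H_{R}$; this is where the comparison between $\D$ over $H_{R}$ and its associated Hayes module over $H^{1}_{R}$, conjugated by an element $z\in H^{1}_{R}$ whose $(q^{d_{\infty}}-1)$-th power governs the sign normalization, enters the argument. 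Once this scalar is handled and the containment $H^{1}_{R}\subseteq F$ is verified, both inclusions follow cleanly from the machinery of Sections \ref{s:C} and \ref{s:D}.
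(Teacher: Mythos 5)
Your overall strategy is the one the paper uses: realize both fields as fixed fields of the same set of Artin symbols, apply Theorem \ref{Sh} to convert the Galois action into multiplication by $s^{-1}$ on the analytic torsion, and use the two directions of Lemma \ref{torlem} (the hypothesis on $\mathfrak{m}$ entering exactly where you place it). However, the step you yourself single out as the main obstacle — forcing the scalar $c\in\F_{q}^{\times}$ with $\Upsilon'=c\Upsilon$ to equal $1$ — is not actually resolved by your argument. Your resolution rests on the containment $H^{1}_{R}\subseteq F$, justified only by the remark that the torsion values ``encode the sign normalization''; but $H^{1}_{R}\subseteq K_{\mathfrak{m}_{R}}$ is part of what the theorem asserts, so assuming $H^{1}_{R}\subseteq F$ in the converse direction is circular, and the cited corollary (coefficients of $e_{\uprho}$ of a Hayes module lie in $H^{1}_{R}$) does not give it — note also that the module $\D$ in this theorem is defined over $H_{R}$ and is in general \emph{not} sign normalized. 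Moreover, even granting that $\upsigma$ fixes $H^{1}_{R}$, no argument is given that this forces $c=1$: an $\F_{q}^{\times}$-scaling commutes with the whole $R$-module structure, so fixing a field of definition by itself does not pin it down. The paper avoids this impasse differently: it does not compare an abstract $\Upsilon'$ with $\Upsilon$, but uses the explicit construction of $\Upsilon'$ in the proof of Theorem \ref{Sh} (diagram (\ref{diagram2})), where $\Upsilon'$ is built from $\Upsilon$ via the isogeny $\uprho_{\mathfrak{p}_{R}}$ and multiplication by $\upalpha^{-1}$, to conclude that on ${\rm Tor}_{\mathfrak{a}_{R}}(\mathfrak{m}_{R})$ one has $\upsigma\circ\Upsilon=\Upsilon\circ s^{-1}\upalpha$; the scalar never has to be normalized by an appeal to $H^{1}_{R}$.

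A second, smaller omission: your fixed-field comparison tacitly assumes that every $\upsigma$ fixing $K_{\mathfrak{m}_{R}}$ (respectively $F$) restricts on $K^{\rm ab}_{\infty}$ to an element $[s,K]$ of the image of the reciprocity map. That image is only dense in ${\rm Gal}(K^{\rm ab}_{\infty}/K)$, so to argue with reciprocity representatives one first needs to know that $F$ itself lies in $K^{\rm ab}_{\infty}$ (and then use that fixing $F$ pointwise is a closed condition). The paper supplies this by first proving that $F$ is abelian over $K$, via the order analog of Theorem 3.1.1 of \cite{Thak}; your proposal skips this step. With the scalar normalization repaired along the paper's lines and the abelianness of $F$ established, the rest of your argument goes through as written.
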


\begin{proof} By definition, $K_{\mathfrak{m}_{R}} $ is abelian. Moreover, 
by the order analog of Theorem 3.1.1 of \cite{Thak} (whose
proof is identical to that in  the case $R=A$), $H_{R}(e_{\uprho} (\upxi t)|\; t\in {\rm Tor}_{\mathfrak{a}_{R}}(mR ) )$ is abelian for any $m\in \mathfrak{m}_{R}$, hence
$H_{R}(e_{\uprho} (\upxi t)|\; t\in {\rm Tor}_{\mathfrak{a}_{R}}(\mathfrak{m}) )$ is abelian as well. 
  Therefore,
it will be enough to show that the two fields appearing in the statement of the Theorem are the fixed fields of the same subgroup of ${\rm Gal}(K^{\rm ab}_{\infty}/K)$.  Let $\upsigma = [s, K]$.  Then 
\[  \upsigma|_{K_{\mathfrak{m}_{R}}}\text{ is trivial }\Longleftrightarrow s^{-1} =\upalpha u, \;\; \upalpha\in K^{\times} \text{ and } u\in \I_{\mathfrak{m}_{R}}. \]
Suppose first that $  \upsigma|_{K_{\mathfrak{m}_{R}}}$ is trivial.  Then $\D^{\upsigma}=\D$ since $\D$ is defined over $H_{R}\subset K_{\mathfrak{m}_{R}} $.  Choose the analytical isomorphism 
$\Upsilon$ in the Main Theorem to be the composition $\mathfrak{a}\rightarrow \upxi\mathfrak{a}$
with the exponential $e_{\uprho}$.
By the Main Theorem, it follows that $s^{-1}\mathfrak{a}$ also parametrizes $\D$ and therefore is a multiple of $\mathfrak{a}$ by an element of $K^{\times}$.
In particular, we may choose the element $\upalpha$ above so that $s^{-1}\mathfrak{a}=\mathfrak{a}$.   But for all $\mathfrak{p}\subset A$, $u_{\mathfrak{p}}\in R^{\times}_{\mathfrak{p}}$ (by definition of $\I_{\mathfrak{m}_{R}}$),
hence $u_{\mathfrak{p}} \mathfrak{a}_{R,\mathfrak{p}}=\mathfrak{a}_{R,\mathfrak{p}}$, which implies $\upalpha \mathfrak{a}_{R,\mathfrak{p}}=\mathfrak{a}_{R,\mathfrak{p}}$, forcing $\upalpha\in \F_{q}^{\times}$.
We may thus choose $\upalpha=1$.  Then,
by Lemma \ref{torlem}, $s$ acts trivially on the analytic torsion ${\rm Tor}_{\mathfrak{a}_{R}}(\mathfrak{m}_{R})$, and by the Main Theorem, it follows that $\upsigma$
 fixes $H_{R}( e_{\uprho} (\upxi t)|\; t\in {\rm Tor}_{\mathfrak{a}_{R}}(\mathfrak{m}_{R}) ) $.  
  In the other direction, suppose $\upsigma=[s,K]$ fixes $H_{R}( e_{\uprho} (\upxi t)|\; t\in {\rm Tor}_{\mathfrak{a}_{R}}(\mathfrak{m}_{R}) ) $.  In particular, $\D^{\upsigma}=\D$,
and after re-choosing $\upalpha$, we may assume $s^{-1}\upalpha\mathfrak{a}_{R}=\mathfrak{a}_{R}$. The latter implies that $s^{-1}\upalpha\in \I^{R}$: indeed, it implies that for all $\mathfrak{p}\subset A$
$s_{\mathfrak{p}}^{-1} \upalpha\mathfrak{a}_{R,\mathfrak{p}} = \mathfrak{a}_{R,\mathfrak{p}}$.  If
 $\mathfrak{p}|\mathfrak{c}$, then
$\mathfrak{a}_{R,\mathfrak{p}} = R_{\mathfrak{p}}$, hence for such $\mathfrak{p}$, $s_{\mathfrak{p}}^{-1}\upalpha\in R_{\mathfrak{p}}^{\times}$, which implies $s^{-1}\upalpha\in \I^{R}$.
By the Main Theorem and its proof (especially (\ref{diagram2})), $\Upsilon' =\Upsilon\circ \upalpha^{-1}$ restricted to ${\rm Tor}_{\mathfrak{a}_{R}}(\mathfrak{m}_{R}) $, so 
\[  \Upsilon \circ s^{-1}\upalpha = \upsigma\circ\Upsilon=\Upsilon \quad \text{restricted to}\;\; {\rm Tor}_{\mathfrak{a}_{R}}(\mathfrak{m}_{R}).\]  From this
 we conclude $s^{-1}\upalpha$ acts trivially on ${\rm Tor}_{\mathfrak{a}_{R}}(\mathfrak{m}_{R})$ as well, 
and by Lemma
\ref{torlem}, (applied to $s^{-1}\upalpha$), we have
$s\in \I_{\mathfrak{m}_{R}}K^{\times}$.  Therefore, by (\ref{reciprocitymapforrcf}),  $\upsigma=[s,K]$ is the identity on $K_{\mathfrak{m}_{R}} $.
\end{proof}

\section{Generation of Hilbert Class Fields of Real Quadratic Rank 2 Orders}\label{HigherHCF}

In this section we will consider $K/\F_{q}[T]$ quadratic and real, where the latter means that over the place $\infty$ there are two places $\infty_{1},\infty_{2}$ in the associated curve
$\Upsigma_{K}$.  The Dedekind domain $A$ will be defined as the ring of functions regular outside of $\infty_{1}$.  The absolute value associated to $\infty$ is denoted $|\cdot |$.
 
Let $f\in k_{\infty}$ and define \[ \Uplambda_{\upvarepsilon}(f)=\{a\in  \F_{q}[T],\| af\|<\upvarepsilon\},\] where $\| x\|$ = the distance of $x$ to the nearest element of $ \F_{q}[T]$,
\[ \upzeta_{f,\upvarepsilon}(n) = \sum_{\upalpha\in \Uplambda_{\upvarepsilon}(f)\;\text{\rm monic}} \upalpha^{-n} ,\quad n\in\N.\]
Define
\begin{align*} \Updelta_{\upvarepsilon} (f) &:=-(T^{q^{2}}-T)\upzeta_{f,\upvarepsilon}(q^{2}-1)+ (T^{q}-T)^{q}\upzeta_{f,\upvarepsilon}(q-1)^{q+1}
 \end{align*} and 
\[g_{\upvarepsilon} (f) :=-(T^{q}-T)\upzeta_{f,\upvarepsilon}(q-1).\]
Then the  $\upvarepsilon$-modular invariant of $f$ is defined
\[ j_{\upvarepsilon}(f):=\frac{{g_{\upvarepsilon}}^{q+1}(f)}{\Updelta_{\upvarepsilon}(f)} = \frac{1}{\frac{1}{T^{q}-T}  -J_{\upvarepsilon}(f) } \]
where
\begin{align} J_{\upvarepsilon}(f) :=  \frac{T^{q^{2}}-T}{(T^{q}-T)^{q+1}} \cdot
\frac{\upzeta_{f,\e}(q^{2}-1)}{\upzeta_{f,\e}(q-1)^{q+1}}.
\end{align}
The {\bf {\em quantum modular invariant}} or {\bf {\em quantum j-invariant}}  of $f$ is 
\[  j^{\rm qt}(f) := \lim_{\e\rightarrow 0} j_{\upvarepsilon}(f)   \subset k_{\infty}\cup \{ \infty \},\]
where by $ \lim_{\e\rightarrow 0} j_{\upvarepsilon}(f) $ we mean the {\it set of limit points} of convergent sequences $\{ j_{\e_{i}}(f)\}$, $\e_{i}\rightarrow 0$.  
 
 We remark that the association
$f\mapsto j^{\rm qt}(f) $ is non-trivially multi-valued:  see \cite{DGII}.  It is invariant with respect to the action of ${\rm PGL}_{2}( \F_{q}[T])$ on $k_{\infty}$ (i.e.\
$j^{\rm qt}(Mf)=j^{\rm qt}(f)$ for all $M\in {\rm PGL}_{2}( \F_{q}[T])$ and $f\in k_{\infty}$, see \cite{DGI}), and so defines
a multi-valued function
\[ j^{\rm qt}(f): {\rm PGL}_{2}( \F_{q}[T])\backslash k_{\infty}\multimap k_{\infty}\cup \{\infty\}.\]

Let $K/\F_{q}(T)$ be real and quadratic.  In what follows, let $f_{0}\in \mathcal{O}_{K}^{\times}$ be a fundamental unit
and write $f=f_{0}^{k}$, $k\in \Z$. We define the quadratic order
\[ R =  A_{f}:=\F_{q}[f, fT,\dots ,fT^{d-1}]\subset A =\F_{q}[f_{0}, f_{0}T,\dots ,f_{0}T^{d_{0}-1} ],\]
   where $ |f_{0}| =q^{-d_{0}}$ and  $d=kd_{0}$, and where the equality $A =\F_{q}[f_{0}, f_{0}T,\dots ,f_{0}T^{d_{0}-1} ]$ is proved for example 
   in \cite{DGIII}.
 Thus, $A_{f_{0}} = A$. 

\begin{lemm}\label{basisLemmaAfk}  The set
\begin{align}\label{proposedbasis} \left\{ 1, f, fT,\dots , f T^{d-1}, f^{2}, f^{2}T, \dots ,   f^{2} T^{d-1}, \dots \right\}\end{align}
is an $\F_{q}$ vector space basis of $A_{f}$.  In particular, if $g\in A\setminus \F_{q}$ and 
$-d\leq \deg_{\infty_{1}} (g)$ then $g\not\in A_{f}$.
\end{lemm}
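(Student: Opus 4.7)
My plan combines (1) linear independence from distinct $\infty_{1}$-valuations, (2) a matching dimension count of a pole-order filtration, and (3) deduction of the ``in particular'' clause.

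For (1): using $v_{\infty_{1}}(f)=-d$ and $v_{\infty_{1}}(T)=-1$, each candidate basis element $f^{n}T^{j}$ (with $n\geq 1$ and $0\leq j\leq d-1$) has $v_{\infty_{1}}(f^{n}T^{j})=-(nd+j)$. Euclidean division makes $(n,j)\mapsto nd+j$ a bijection onto $\{\ell\in\Z:\ell\geq d\}$, so combined with $v_{\infty_{1}}(1)=0$ the proposed set realizes pairwise distinct $\infty_{1}$-valuations, forcing $\F_{q}$-linear independence by the ultrametric inequality.

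For (2): define the filtration $L_{N}(A_{f})=\{g\in A_{f}:v_{\infty_{1}}(g)\geq -N\}$ for $N\geq 0$. The proposed basis contributes exactly $N-d+2$ linearly independent elements to $L_{N}$ when $N\geq d$, giving $\dim_{\F_{q}}L_{N}\geq N-d+2$. For the matching upper bound, the real-quadratic hypothesis is essential: both $\infty_{1}$ and $\infty_{2}$ are rational places (each with $e=f=1$ over the rational place $\infty$ of $\F_{q}(T)$), so the residue field at $\infty_{1}$ is $\F_{q}$, and the leading-coefficient map $L_{N}/L_{N-1}\hookrightarrow\F_{q}$ yields $\dim_{\F_{q}}(L_{N}/L_{N-1})\leq 1$. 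It remains to identify $v_{\infty_{1}}(A_{f}\setminus\{0\})=\{0\}\cup\{\ell\in\Z:\ell\leq -d\}$. The inclusion $\supseteq$ follows because the generators $f,fT,\ldots,fT^{d-1}$ have the $d$ consecutive $\infty_{1}$-valuations $-d,-d-1,\ldots,-(2d-1)$, whose additive monoid (generated under multiplication in $A_{f}$) is exactly $\{0\}\cup\{\ell\leq -d\}$. The inclusion $\subseteq$ reduces to showing that no finite $\F_{q}$-linear combination of nonconstant monomials $f^{n}T^{m}$ (each with $v_{\infty_{1}}\leq -d$) can have $v_{\infty_{1}}$ land in the forbidden window $\{-(d-1),\ldots,-1\}$. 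This last step is the main obstacle: my strategy is to track the top-degree coefficient in the $f_{0}$-expansion of a putative such element; the equation at the highest power uniquely determines the leading coefficient to be nonzero, which conflicts with the required value $0$ of the target at that degree, yielding a contradiction. Matching the two bounds then gives $\dim L_{N}=N-d+2$ for $N\geq d$ and $\dim L_{N}=1$ for $N<d$, so the proposed set is an $\F_{q}$-basis of $A_{f}=\bigcup_{N}L_{N}$.

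For (3): any nonzero $g\in A\setminus\F_{q}$ whose $\infty_{1}$-pole order is strictly less than $d$ satisfies $v_{\infty_{1}}(g)\in\{-(d-1),\ldots,-1\}$, which is disjoint from the realized set $\{0\}\cup\{\ell\leq -d\}$ of $A_{f}$; hence $g\notin A_{f}$. Throughout, the real-quadratic hypothesis enters both via the identification $\F_{\infty_{1}}=\F_{q}$ (which collapses the graded pieces to at most dimension 1) and via the product-formula / expansion arguments that underwrite the characterization of $v_{\infty_{1}}(A_{f}\setminus\{0\})$.
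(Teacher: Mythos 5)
Your independence argument (part (1)) and the monoid computation giving the inclusion $\supseteq$ are correct, but the proposal has a genuine gap at the decisive step. Your dimension count for spanning needs the claim $v_{\infty_{1}}(A_{f}\setminus\{0\})\subseteq\{0\}\cup\{\ell\le -d\}$, i.e.\ that no nonzero element of $A_{f}$ has pole order at $\infty_{1}$ in $\{1,\dots,d-1\}$, and you only offer a sketch for it. The sketch does not work as described: a general element of $A_{f}$ is an $\F_{q}$-combination of monomials $f^{n}T^{m}$ with $n\ge 1$ and $0\le m\le n(d-1)$, and distinct monomials of this kind can share the same pole order $nd+m$, so there is no single ``equation at the highest power'' whose leading coefficient is forced to survive; the top terms can and do cancel. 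Concretely, $f^{2}T^{d}$ and $f^{3}$ both have pole order $3d$, and $f^{2}T^{d}-f^{3}=-f^{2}\mathtt{a}_{\rm lower}-\mathtt{b}f$ has strictly smaller pole order, after which you have no a priori control on where the valuation of a combination lands without further input. Note also that, given the easy parts you did prove, the forbidden-window claim is equivalent to the spanning statement itself, so leaving it to a vague coefficient-tracking strategy leaves the lemma unproved. (A small side point: you prove the ``in particular'' clause for pole order strictly less than $d$; that is the sensible reading, since for pole order exactly $d$ the conclusion fails for $g=f$.)

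The paper closes exactly this gap constructively and needs no filtration or dimension count: from the minimal polynomial $f^{2}=\mathtt{a}f+\mathtt{b}$ with $\deg_{T}(\mathtt{a})=d$ and $\mathtt{b}\in\F_{q}^{\times}$ one solves $T^{d}=f-\mathtt{a}_{\rm lower}-\mathtt{b}f^{-1}$, where $\mathtt{a}_{\rm lower}=\mathtt{a}-T^{d}$, hence $f^{l}T^{d}=f^{l+1}-f^{l}\mathtt{a}_{\rm lower}-\mathtt{b}f^{l-1}$ for $l\ge 1$, and an induction on the $T$-exponent rewrites every monomial $f^{l}T^{m}$ as an $\F_{q}$-combination of the elements of (\ref{proposedbasis}). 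Spanning follows directly, independence is immediate from the distinct valuations exactly as in your part (1), and the forbidden-window statement (hence your ``in particular'') is then a consequence rather than an input. If you wish to keep your filtration framework, you must either import this rewriting step to establish the window claim or supply a genuinely independent proof of it; as written, that step is missing.
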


 \begin{proof} Let $G(X_{0}, \dots , X_{d-1}) \in \F_{q} [X_{0}, \dots , X_{d-1}]$ with 
\[ \upalpha = G(f, \dots , fT^{d-1})  \in A_{f}.\]
Then $\upalpha$ is a linear combination of monomials of the form
\[ f^{l} T^{m} ,\quad l,m\geq 0 \]
in which, if $l=0$,  then $m=0$.  Suppose first that $m= d$.  Then using the minimal polynomial for $f$, 
\begin{align}\label{MinPolyf} f^{2} = {\tt a}f+{\tt b},\quad {\tt a} \in \F_{q}[T], \;\deg_{T}({\tt a}) = d,\;\; {\tt b}\in \F_{q}^{\times},\end{align}
we may solve
for $T^{d}$ as 
\[  T^{d} =  f-{\tt a}_{\rm lower} -\frac{\tt b}{f}
\]
where 
\[ {\tt a}_{\rm lower}  = {\tt a}- \text{leading term} = {\tt a}-T^{d} .\]
We then have
\[ f^{l}T^{d} = f^{l}\left(  f -{\tt a}_{\rm lower} -\frac{\tt b}{f} \right) \]
which is a linear combination of elements in (\ref{proposedbasis}).
By induction, one can express any monomial of the shape $ f^{l} T^{m}$ as a linear combination of monomials occurring in the set (\ref{proposedbasis}).
\end{proof}

Consider the ideals 
\begin{align}\label{aidef} \mathfrak{a}_{i} =\mathfrak{a}_{f,i} =(f, fT,\dots ,fT^{i})\subset A_{f}, \quad i=0,1,\dots ,d-1 .\end{align}

 \begin{prop}\label{PowerProp} For all $i$, $\mathfrak{a}_{i}= \mathfrak{a}_{d-1}^{d-i}$.  In particular, $Z = \{ \mathfrak{a}_{0},\dots , 
 \mathfrak{a}_{d-1}\}$ forms a cyclic subgroup of the class group ${\sf Cl}_{A_{f}}$.
      \end{prop}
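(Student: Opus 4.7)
The plan is to reduce the identity $\mathfrak{a}_i = \mathfrak{a}_{d-1}^{d-i}$ to a single multiplicative relation, after which the cyclic subgroup statement follows immediately.  Starting from the trivial base case $i = d-1$, induction on $d - i$ shows that it is enough to establish
\[ \mathfrak{a}_{d-1}\, \mathfrak{a}_j \;=\; \mathfrak{a}_{j-1} \quad \text{for}\;\; j=1,\ldots, d-1. \]

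To prove this identity I would argue by two inclusions, relying on the minimal polynomial $f^2 = {\tt a} f + {\tt b}$ with ${\tt a}= T^d +\sum_{i=0}^{d-1} c_i T^i$ and ${\tt b}\in\F_q^\times$.  The key auxiliary fact is that $fT^d = f^2 - \sum_{i=0}^{d-1} c_i fT^i - {\tt b} \in A_f$.  For $\mathfrak{a}_{d-1}\mathfrak{a}_j\subset \mathfrak{a}_{j-1}$ I examine the generators $f^2T^c$ with $0\le c\le d+j-1$: when $c\le d$ I write $f^2 T^c = f\cdot fT^c\in (f)\subset \mathfrak{a}_{j-1}$; when $d+1\le c \le d+j-1$ I factor $f^2T^c = fT^{c-d}\cdot fT^d$, observing that $c-d\le j-1$ forces $fT^{c-d}\in\mathfrak{a}_{j-1}$ while $fT^d\in A_f$.

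For the reverse inclusion I would multiply the identity ${\tt b} = f^2 - {\tt a} f$ by $f$ and rearrange to obtain
\[ f \;=\; {\tt b}^{-1}\bigl( f\cdot f^2 \;-\; f^2 T^d \;-\; \textstyle\sum_{i=0}^{d-1} c_i\, f^2 T^i \bigr), \]
which exhibits $f$ as an $A_f$-linear combination of $f^2 T^c$ for $0\le c\le d$, all generators of $\mathfrak{a}_{d-1}\mathfrak{a}_j$ when $j\ge 1$.  Multiplying this through by $T^k$ for $0\le k\le j-1$ yields a similar expression for $fT^k$ involving $f^3 T^k = f\cdot f^2 T^k$ and $f^2 T^{c+k}$ with $0\le c\le d$; since $c+k\le d+j-1\le 2(d-1)$, each such $f^2 T^{c+k}$ remains a monomial of $A_f$ and a generator of $\mathfrak{a}_{d-1}\mathfrak{a}_j$.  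This covers all of the generators $f, fT, \ldots, fT^{j-1}$ of $\mathfrak{a}_{j-1}$.

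From $\mathfrak{a}_{d-1}^d = \mathfrak{a}_0 = (f)$ I would then conclude that $\mathfrak{a}_{d-1}$ is invertible with inverse $f^{-1}\mathfrak{a}_{d-1}^{d-1} = f^{-1}\mathfrak{a}_1$, so every $\mathfrak{a}_i$ is invertible and the classes $[\mathfrak{a}_i] = [\mathfrak{a}_{d-1}]^{d-i}$ exhaust the cyclic subgroup of ${\sf Cl}_{A_f}$ generated by $[\mathfrak{a}_{d-1}]$, whose order divides $d$.  The main technical obstacle is careful bookkeeping of which monomials $fT^c$ and $f^2T^c$ actually lie in $A_f$ (the bounds $c\le d$ and $c\le 2(d-1)$ respectively are tight, as $fT^{d+1}\notin A_f$ shows) and checking at each step that the coefficients arising from the minimal polynomial manipulation remain in $A_f$.
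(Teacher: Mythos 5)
Your proof is correct and follows essentially the same route as the paper: both reduce the statement to the multiplicative relation $\mathfrak{a}_{d-1}\mathfrak{a}_{j}=\mathfrak{a}_{j-1}$ and verify it by two inclusions, rewriting the generators $f^{2}T^{c}$ with $c\geq d$ via the minimal polynomial $f^{2}={\tt a}f+{\tt b}$ (your key fact $fT^{d}\in A_{f}$ is just the paper's identity $f^{2}T^{d}=f^{3}-f^{2}{\tt a}_{\rm lower}-f{\tt b}$ divided by $f$), the paper writing out the case $j=d-1$ in detail and leaving the general inductive step to the reader. The only slip is your parenthetical tightness claim: $f^{2}T^{2d-1}=(fT^{d-1})(fT^{d})$ does lie in $A_{f}$, but nothing in your argument depends on that remark.
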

      
   \begin{proof} See also \cite{DGIII}, where a proof is given for ${\tt a}=T^{d}$.  A generating set for $\mathfrak{a}^{2}_{d-1}$ is $\{ f^{2}T^{j}\}_{j=0}^{2d-2}$.  Now the  generators with $j\geq d$
may be written
   \begin{align}\label{genrewrite} f^{2}T^{d} =f^{3}- f^{2}{\tt a}_{\rm lower}-f{\tt b},\;\;\dots\; , f^{2}T^{2d-2} =T^{d-2} ( f^{3}- f^{2}{\tt a}_{\rm lower}-f{\tt b}),\end{align}
        where, as in Lemma \ref{basisLemmaAfk}, ${\tt a}_{\rm lower}= {\tt a}-T^{d}$.  It follows by inspection that these generators all belong to $\mathfrak{a}_{d-2}$.  The generators with $j\leq d-1$ trivially
        belong to $\mathfrak{a}_{d-2}$, which gives the inclusion $\mathfrak{a}^{2}_{d-1}\subset \mathfrak{a}_{d-2}$.  On the other hand,
        \[  f^{3} -f^{2}{\tt a}_{\rm lower}, \dots ,T^{d-2}(  f^{3} -f^{2}{\tt a}_{\rm lower}) \in \mathfrak{a}^{2}_{d-1},  \]
and together with (\ref{genrewrite}), this gives $f,\dots ,fT^{d-2}\in  \mathfrak{a}^{2}_{d-1}$ i.e.\ $\mathfrak{a}^{2}_{d-1}= \mathfrak{a}_{d-2}$.  The general statement proceeds by showing, by induction,
that $\mathfrak{a}_{d-i}\mathfrak{a}_{d-1}=\mathfrak{a}_{d-i-1}$.
      \end{proof}

By Proposition \ref{PowerProp}, the $\mathfrak{a}_{i}$ are invertible and thus define elements of ${\sf Cl}_{A_{f}}$.  For any $\mathfrak{a}\subset A_{f}$,
the modular invariant $j(\mathfrak{a})$ was defined in (\ref{defnofj}) of section \S \ref{RCFofR}.  

Now consider the allied ``rank 2'' order 
 \[ \mathcal{O}_{f} =\F_{q} [T] [f,f^{-1}]= \F_{q} [T, f, f^{-1}] \subset \mathcal{O}_{K}.\]
 It has conductor $\mathfrak{C}$ for which $A\cap \mathfrak{C}=\mathfrak{c}$ = the conductor of $A_{f}\subset A$.
Its Hilbert class field $H_{\mathcal{O}_{f}}$  is defined via class field theory as the abelian extension associated to
the group
\[ C_{\mathcal{O}_{f}} =\frac{ \I_{\mathcal{O}_{f}} \cdot K^{\times}}{K^{\times}}  \]
where
\[   \I_{\mathcal{O}_{f}} = K^{\times}_{\infty_{1}}\cdot K^{\times}_{\infty_{2}}\cdot \prod_{\mathfrak{p}\not=\infty_{1},\infty_{2}}  (\mathcal{O}_{f})^{\times}_{\mathfrak{p}} .\]  
We have the following diamond of Hilbert class fields
\[ 
\begin{diagram}
H_{A_{f}} &  & \\
\vLine  & \rdLine & \\
H_{A}  && H_{\mathcal{O}_{f}} \\
  & \rdLine &\vLine \\
& &H_{\mathcal{O}_{K}} . 
\end{diagram}
\]
As usual,

The purpose of the present section is to prove the following Theorem, which is the order
analog of the main result of \cite{DGIII}:

\begin{theo}\label{OrderGenTheorem} The Hilbert class field $H_{\mathcal{O}_{f}}$ is primitively generated over $K$ by
\[    \prod_{\upalpha\in j^{\rm qt}(f)} \upalpha = \prod_{i=0}^{d-1} j(\mathfrak{a}_{i}) .   \]
\end{theo}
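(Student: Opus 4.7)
The plan is to adapt the strategy of \cite{DGIII}---which proves the case $R=A$, $f=f_{0}$ the fundamental unit---to the order $A_{f}$, using the Hayes theory for orders developed in \S\S \ref{s:C}--\ref{RCFofR}. The set-theoretic identity $j^{\rm qt}(f) = \{j(\mathfrak{a}_{i})\colon i=0,\ldots,d-1\}$ would be obtained by adapting the argument of \cite{DGIII}: the approximation lattices $\Uplambda_{\e}(f)$ in the quantum construction, organized by the $\F_{q}$-basis of $A_{f}$ from Lemma \ref{basisLemmaAfk} (in place of the basis of $A$ used in \cite{DGIII}), distribute as $\e\to 0$ among the ideals $\mathfrak{a}_{i}$ of (\ref{aidef}). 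Multiplying over the set then yields the asserted identity $\prod_{\upalpha\in j^{\rm qt}(f)}\upalpha = \prod_{i=0}^{d-1} j(\mathfrak{a}_{i})$.

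By Corollary \ref{cor} each $j(\mathfrak{a}_{i})\in H_{A_{f}}$, so $P := \prod_{i=0}^{d-1} j(\mathfrak{a}_{i})\in H_{A_{f}}$. By Proposition \ref{PowerProp} the set $Z := \{[\mathfrak{a}_{i}]\}\subset {\sf Cl}_{A_{f}}$ is a cyclic subgroup generated by $[\mathfrak{a}_{d-1}]$; by Theorem \ref{jt3} the Galois action of ${\sf Cl}_{A_{f}}\cong {\rm Gal}(H_{A_{f}}/K)$ on modular invariants is $[\mathfrak{b}]\cdot j(\mathfrak{a})=j(\mathfrak{b}^{-1}\mathfrak{a})$, so $Z$ permutes $\{j(\mathfrak{a}_{i})\}$ and fixes $P$. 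Writing
\[ P = \prod_{\upsigma\in Z}\upsigma\bigl(j(\mathfrak{a}_{d-1})\bigr) = N_{H_{A_{f}}/H_{A_{f}}^{Z}}\bigl(j(\mathfrak{a}_{d-1})\bigr), \]
the critical identification is $H_{A_{f}}^{Z}=H_{\mathcal{O}_{f}}$. Id\`{e}lically, $\I_{\mathcal{O}_{f}}$ differs from $\I_{A_{f}}$ in replacing the factor $(A_{f})_{\infty_{2}}^{\times}$ by $K_{\infty_{2}}^{\times}$, so $C_{\mathcal{O}_{f}}/C_{A_{f}}$ is generated by the class of an id\`{e}le with a uniformizer at $\infty_{2}$ and $1$ elsewhere. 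Under reciprocity this class is the Frobenius at the prime $\mathfrak{p}_{\infty_{2}}\subset A$; and since $(f)A_{f}=\mathfrak{a}_{0}=\mathfrak{a}_{d-1}^{d}$ is principal while $\mathfrak{a}_{d-1}\mathcal{O}_{f}=\mathcal{O}_{f}$ (because $f^{-1}\in\mathcal{O}_{f}$), a direct ideal-theoretic computation identifies this Frobenius class with $[\mathfrak{a}_{d-1}]\in Z$, giving $C_{\mathcal{O}_{f}}/C_{A_{f}}\cong Z$ and $P\in H_{\mathcal{O}_{f}}$.

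To conclude $K(P)=H_{\mathcal{O}_{f}}$, I would show that the ${\sf Cl}_{A_{f}}/Z$-orbit of $P$ in $H_{A_{f}}$ consists of $[H_{\mathcal{O}_{f}}:K]$ distinct elements: for cosets $[\mathfrak{b}_{1}]Z\neq [\mathfrak{b}_{2}]Z$, the shifted products $\prod_{i} j(\mathfrak{b}_{j}^{-1}\mathfrak{a}_{i})$ range over disjoint cosets of $Z$ in ${\sf Cl}_{A_{f}}$, and by the injectivity of $j$ (Corollary \ref{cor}) together with the distinctness-of-products analysis of \cite{DGIII}, these products remain distinct. The main obstacle is the class-field-theoretic matching $C_{\mathcal{O}_{f}}/C_{A_{f}}=Z$: it requires careful comparison of id\`{e}le groups at $\infty_{2}$ and at primes dividing the conductors, and uses essentially that the unit $f$ becomes invertible in passing from $A_{f}$ to $\mathcal{O}_{f}$.
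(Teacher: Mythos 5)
Your overall architecture matches the paper's: establish $j^{\rm qt}(f)=\{j(\mathfrak{a}_{i})\}_{i=0}^{d-1}$ and the injectivity/distinctness machinery by rerunning \cite{DGIII} for $A_{f}$ (the paper does exactly this, observing that those arguments use only Binet's formula, the unit $f$, the $\F_{q}$-basis of Lemma \ref{basisLemmaAfk}, and the $2$-generator property of Theorem \ref{2gentheo}), then realize the product as a norm over the subgroup $Z=\langle[\mathfrak{a}_{d-1}]\rangle$ and identify the fixed field of $Z$ with $H_{\mathcal{O}_{f}}$. Where you genuinely diverge is in that last identification, which is the one new ingredient the paper supplies (Proposition \ref{ZProp}): the paper proves $Z={\rm Ker}({\sf Cl}_{A_{f}}\to{\sf Cl}_{\mathcal{O}_{f}})$ by a commutative diagram of exact sequences comparing $(A_{f},\mathcal{O}_{f})$ with $(A,\mathcal{O}_{K})$, invoking Proposition 2 of \cite{DGIII} for the fundamental-unit case together with the expansion computation $\mathfrak{a}_{d-1}A=\mathfrak{a}_{d_{0}-1}$; you instead compute ${\rm Gal}(H_{A_{f}}/H_{\mathcal{O}_{f}})\cong C_{\mathcal{O}_{f}}/C_{A_{f}}$ directly with id\`{e}les, showing it is generated by a uniformizer at $\infty_{2}$ whose Artin symbol is the Frobenius at $\mathfrak{p}_{\infty_{2}}$, identified with $[\mathfrak{a}_{d-1}]$. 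Your route is viable and somewhat more self-contained (it does not reuse the fundamental-unit kernel computation), but it puts the burden on two local verifications that you flag without carrying out: (i) the completions of $A_{f}$ and $\mathcal{O}_{f}=A_{f}[f^{-1}]$ agree at every finite prime other than $\mathfrak{p}_{\infty_{2}}$, including conductor primes --- this holds because $\mathfrak{a}_{d-1}$ is the unique maximal ideal of $A_{f}$ containing $f$ and is prime to $\mathfrak{c}$ (it is invertible since $\mathfrak{a}_{d-1}^{d}=(f)$, so Theorem \ref{invertiffprimetoc} applies, and then Propositions \ref{WhenLocalEqual} and \ref{CompletionProp} give $(A_{f})_{\infty_{2}}^{\times}=A_{\mathfrak{p}_{\infty_{2}}}^{\times}$ as well); and (ii) the actual identification of the Frobenius class: what is needed is the contraction identity $\mathfrak{p}_{\infty_{2}}\cap A_{f}=\mathfrak{a}_{d-1}$ (equivalently $\mathfrak{a}_{d-1}A=\mathfrak{p}_{\infty_{2}}$, the computation the paper performs via Theorem \ref{bijectionprimetoc}), not merely the facts that $(f)$ is principal and $\mathfrak{a}_{d-1}\mathcal{O}_{f}=\mathcal{O}_{f}$, which only show that $[\mathfrak{a}_{d-1}]$ lies in the kernel. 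The paper's diagram argument buys economy by leaning on the published $R=A$ case; yours buys independence from it at the cost of these local checks, and both defer the analytic distinctness-of-products step to \S 3 of \cite{DGIII} in the same way.
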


 All of the arguments used in  \cite{DGIII}  extend in an expected way
to $H_{A_{f}}$, particularly because the main ingredient in most of our proofs was that $f$ is a unit, not necessarily fundamental. The assumption that $f$ is a fundamental unit
makes an explicit appearance in the middle of \S 1 of \cite{DGIII}: this was done so that we could identify generators of $A$, which necessarily use a fundamental unit.  In 
what follows, we will review carefully all of the elements in the proof appearing
in \cite{DGIII}, pointing out whatever changes need to be made to deal with the fact that $A_{f}$ is no longer a Dedekind domain.
The main point is that 
\[ A_{f}= \F_{q}[ f, fT,\dots , fT^{d-1}]\]
has exactly the same shape as $A$ when expressed explicitly using generators.

\vspace{3mm}

\noindent \S 1 of \cite{DGIII}: ``Diophantine Approximations of Quadratic Units''

\vspace{3mm}

 In the first part of the section, before assuming $f$ fundamental, we provide
an explicit formula for $\Uplambda_{\upvarepsilon}(f)$ which  we describe this now.
 Define 
${\tt Q}_{n}\in \F_{q}[T]$ by 
$ {\tt Q}_{0}=1, {\tt Q}_{1}={\tt a},\dots , {\tt Q}_{n+1}={\tt a}{\tt Q}_{n} +{\tt b}{\tt Q}_{n-1}$,
${\tt a},{\tt b}$ are as in (\ref{MinPolyf}).  
If $f^{\ast}$ denotes the Galois conjugate of $f$, 
we may assume $|f|>|f^{\ast}|$, and then
 $|f|=|{\tt a}|=q^{d}$ and $|f^{\ast}|=q^{-d}$.  Let ${\tt D}={\tt a}^{2}+4{\tt b}$ be the discriminant.
Using Binet's formula
\begin{align}\label{Binetform}{\tt Q}_{n} =\frac{ f^{n+1}-(f^{\ast})^{n+1}}{\sqrt{{\tt D}}},\quad n=0,1,\dots ,\end{align}
one may show that
$\| {\tt Q}_{n}f\| =  q^{ -(n+1) d } $,
from which it follows that the set \begin{align*}
\mathcal{B}=\{ T^{d-1}{\tt Q}_{0}, \dots , T{\tt Q}_{0}, {\tt Q}_{0} ; T^{d-1}{\tt Q}_{1}, \dots , T{\tt Q}_{1}, {\tt Q}_{1};\dots  \}\end{align*}
forms an $\F_{q}$ basis of $\F_{q}[T]$.
Write 
$\mathcal{B}(i) = \{ T^{d-1}{\tt Q}_{i},\dots , {\tt Q}_{i}\}$
 for the $i$th block of $\mathcal{B}$ and for $0\leq \tilde{d}\leq d-1$, denote
$ \mathcal{B}(i)_{\tilde{d}} = \{T^{\tilde{d}}{\tt Q}_{i},\dots , {\tt Q}_{i}\}$.
Then (see \cite{DGIII}, Lemma 1) 
\[  \Uplambda_{q^{-Nd-l}}(f )={\rm span}_{\F_{q}} ( \mathcal{B}(N)_{d-1-l},\mathcal{B}(N+1),\dots   ) .\]

Then, there is the renormalization result, Proposition 1 of  \cite{DGIII}, which shows that the renormalized $\Uplambda_{\upvarepsilon}(f)$ converge to the $\mathfrak{a}_{i}$.
The proof of this statement is valid for an arbitrary order, since it only uses Binet's formula.

\vspace{3mm}

\noindent \S 2 of \cite{DGIII}: ``The Quantum $j$-Invariant in Positive Characteristic''

\vspace{3mm}

The definitions of the $j$ invariant for ideal classes of $A_{f}$ and the associated quantum $j$-invariant are the same as those that appear in \cite{DGIII}.  The analog of Lemma 2 of \cite{DGIII}
is proved in Proposition \ref{PowerProp}.  Finally, Theorem 4 of \cite{DGIII}, which says that
\[ j^{\rm qt}(f) = \{ j( \mathfrak{a}_{i} )\}_{i=0}^{d-1}\]
remains valid for an arbitrary unit $f$, since the proof only uses Binet's formula.

\vspace{3mm}

\noindent \S 3 of \cite{DGIII}: ``Injectivity''

\noindent

\vspace{3mm}

 In the very beginning of this section, we pick an ideal class $[\mathfrak{a}]\in {\sf Cl}_{A}$ and write $\mathfrak{a}=(g,h)$.
In the case of $A_{f}$, by definition, an element of $[\mathfrak{a}]\in {\sf Cl}_{A_{f}}$ is the class of an invertible ideal.  
We prove above (see Theorem \ref{2gentheo})  that invertible ideals may be generated
by two elements.  So this presentation of $\mathfrak{a}=(g,h)$ remains valid.  We also need that $g$ has the smallest degree amongst all non-0 elements of $\mathfrak{a}$.  We may make
this assumption in view of {\it Note} \ref{mingennote} of section \S \ref{s:A}. The remainder of this section, which comprises the main technical results of \cite{DGIII}, only uses the fact that $f$ is a unit and the shape of $A_{f}$
as displayed above, which is identical in form to that of $A$.  Therefore these results hold identically in the case of $A_{f}$.

\vspace{3mm}

\noindent 

\vspace{3mm}

 \noindent \S 4 of \cite{DGIII}: ``Generation of the Hilbert Class Field'' 

\vspace{3mm}  Just as in the case of $A$, since $K$ is totally real, 
$H^{1}_{A_{f}}=$ the narrow Hilbert class field  of $A_{f}$ $=H_{A_{f}}$. The analog of Theorem 7 of \cite{DGIII} for $A_{f}$ is simply Corollary \ref{cor} of section \S \ref{RCFofR}, we simply note that $\mathfrak{a}\subset K$ is now an invertible $A_{f}$ fractional ideal.  The analog of Proposition 2 of \cite{DGIII} is the following:
 \begin{prop}\label{ZProp}  Let $  \mathfrak{a}_{d-1}\in  {\sf Cl}_{A_{f}} $ be as above.  Then 
       $ \langle  \mathfrak{a}_{d-1}\rangle  = {\rm Ker}( {\sf Cl}_{A_{f}} \rightarrow {\sf Cl}_{\mathcal{O}_{f}}  )$. In particular, the Galois group
      $  Z={\rm Gal}(H_{A_{f}}/H_{\mathcal{O}_{f}})$ is canonically isomorphic to $ \langle \mathfrak{a}_{d-1}\rangle $ via Class Field Theory.
       \end{prop}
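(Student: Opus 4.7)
The plan is to use the id\`ele-ideal dictionary for the order $A_f$. Under class field theory, $Z = {\rm Gal}(H_{A_f}/H_{\mathcal{O}_f})$ corresponds to ${\rm Ker}({\sf Cl}_{A_f}\to {\sf Cl}_{\mathcal{O}_f})$, and via the isomorphism ${\sf Cl}_{A_f}\cong C_K/C_{A_f}$ this kernel is the image of $\I_{\mathcal{O}_f}$ in ${\sf Cl}_{A_f}$. I will show this image equals $\langle \mathfrak{a}_{d-1}\rangle$ by first identifying $\mathfrak{a}_{d-1}$ with the prime $\mathfrak{p}_{2,A_f}$ of $A_f$ lying under $\infty_2$, and then observing that every id\`ele class in $\I_{\mathcal{O}_f}$ maps to a power of $\mathfrak{p}_{2,A_f}$.

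The first task is the identification $\mathfrak{a}_{d-1} = \mathfrak{p}_{2,A_f}$ as $A_f$-ideals. The generator $fT^{d-1}\in A_f$ satisfies $v_{\infty_2}(fT^{d-1}) = kd_0 - (d-1) = 1$ (using $v_{\infty_2}(f_0)=d_0$ and $v_{\infty_2}(T)=-1$, since $\infty_T$ splits in $K$), so it is a uniformizer of $A_{\infty_2}$; combined with $\F_q\subset A_f$ and the residue field $\F_q$ at $\infty_2$, this shows $A_f$ is dense in $A_{\infty_2}$, hence $(A_f)_{\infty_2} = A_{\infty_2}$, and so $\mathfrak{p}_{2,A_f}$ is prime to the conductor and invertible. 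Since $f\in \mathcal{O}_K^\times$ is a unit at every finite prime of $K$, each generator $fT^i$ of $\mathfrak{a}_{d-1}$ becomes a unit at every maximal ideal $\mathfrak{q}\ne \mathfrak{p}_{2,A_f}$ of $A_f$; at $\mathfrak{p}_{2,A_f}$ the ideal is locally generated by $fT^{d-1}$, the generator of smallest $\infty_2$-valuation, producing the maximal ideal. The $A_f$-ideals $\mathfrak{a}_{d-1}$ and $\mathfrak{p}_{2,A_f}$ thus have matching localizations at every maximal ideal of $A_f$ and so coincide.

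With this in hand, the inclusion $\langle \mathfrak{a}_{d-1}\rangle\subseteq Z$ is immediate: for every $i$, $f\in \mathcal{O}_f^\times$ and $1\in (1,T,\dots,T^i)\mathcal{O}_f$ give $\mathfrak{a}_i\mathcal{O}_f = \mathcal{O}_f$. For the reverse inclusion, use the CFT-compatible id\`ele-to-$A_f$-ideal map $s\mapsto (s)_{A_f} = \prod_{\mathfrak{p}\ne\infty_1}\mathfrak{p}_{A_f}^{v_\mathfrak{p}(s_\mathfrak{p})}$. For $s\in \I_{\mathcal{O}_f}$, each $s_\mathfrak{p}$ at a finite prime of $K$ is a unit in $(\mathcal{O}_f)_\mathfrak{p}\subseteq K_\mathfrak{p}$, so $v_\mathfrak{p}(s_\mathfrak{p}) = 0$; only the $\infty_2$-component contributes, giving $(s)_{A_f} = \mathfrak{p}_{2,A_f}^{v_{\infty_2}(s_{\infty_2})}$, a power of $\mathfrak{a}_{d-1}$. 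Hence the image of $\I_{\mathcal{O}_f}$ sits inside $\langle \mathfrak{a}_{d-1}\rangle$, giving $Z = \langle \mathfrak{a}_{d-1}\rangle$. The main obstacle I foresee is the identification $\mathfrak{a}_{d-1} = \mathfrak{p}_{2,A_f}$ in the previous paragraph: it rests on recognizing that among the explicit ring generators of $A_f$ the element $fT^{d-1}$ has valuation exactly $1$ at $\infty_2$, a computation that uses the precise form $d=kd_0$ of the order.
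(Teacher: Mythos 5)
Your route is genuinely different from the paper's (which reduces to the fundamental-unit case of \cite{DGIII} via a commutative diagram of class-group exact sequences and the computation $\mathfrak{a}_{d-1}A=\mathfrak{a}_{d_{0}-1}$), and your identification $\mathfrak{a}_{d-1}=\mathfrak{p}_{2,A_{f}}$ is correct (modulo the small slip that for $i\geq 1$ the generator $fT^{i}$ is \emph{not} a unit at primes over $(T)$; only the generator $f$ is, but that single generator suffices to make the localizations away from $\mathfrak{p}_{2,A_{f}}$ trivial). The inclusion $\langle\mathfrak{a}_{d-1}\rangle\subseteq Z$ is also fine. The problem is the reverse inclusion, at the primes dividing the conductor $\mathfrak{c}$ of $A_{f}$. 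Your argument is: for $s\in\I_{\mathcal{O}_{f}}$ the finite components satisfy $v_{\mathfrak{p}}(s_{\mathfrak{p}})=0$, hence ``only the $\infty_{2}$-component contributes.'' For a non-maximal order this inference is exactly what fails: the kernel of the CFT-compatible map $\I_{K}\rightarrow{\sf Cl}_{A_{f}}$ is $\I_{A_{f}}K^{\times}$, and at a prime $\mathfrak{p}\mid\mathfrak{c}$ an element of $K_{\mathfrak{p}}$ of valuation zero that is not a unit of $(A_{f})_{\mathfrak{p}}$ contributes a nontrivial class even though its ``ideal'' is trivial. Indeed the paper only defines the ideal map $(x)$ for $x\in\I^{R}$, i.e.\ for id\`eles that are $(A_{f})_{\mathfrak{p}}$-units at conductor primes, precisely for this reason; your formula $(s)_{A_{f}}=\prod\mathfrak{p}_{A_{f}}^{v_{\mathfrak{p}}(s_{\mathfrak{p}})}$ is neither defined on invertible classes nor CFT-compatible at $\mathfrak{p}\mid\mathfrak{c}$. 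As a sanity check, the same ``valuation zero'' reasoning applied to $\I_{A}$ would show that ${\rm Ker}({\sf Cl}_{A_{f}}\rightarrow{\sf Cl}_{A})$ is trivial, which is false in general.

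What your argument actually needs is the statement that $\I_{\mathcal{O}_{f}}\subseteq K^{\times}_{\infty_{2}}\cdot\I_{A_{f}}$, i.e.\ that $(\mathcal{O}_{f})^{\times}_{\mathfrak{p}}=(A_{f})^{\times}_{\mathfrak{p}}$ (equivalently $(\mathcal{O}_{f})_{\mathfrak{p}}=(A_{f})_{\mathfrak{p}}$) at every finite prime $\mathfrak{p}$, and this is where the specific shape of $\mathcal{O}_{f}=\F_{q}[T,f,f^{-1}]$ must enter. The fact is true and can be supplied: $(A_{f})_{\mathfrak{p}}$ is a closed subring of $A_{\mathfrak{p}}$ containing the open ideal $\mathfrak{c}_{\mathfrak{p}}$, so any element of $(A_{f})_{\mathfrak{p}}$ of valuation zero has a power congruent to $1$ modulo $\mathfrak{c}_{\mathfrak{p}}$ and is therefore invertible in $(A_{f})_{\mathfrak{p}}$ (geometric series); applying this to $f$, which lies in $A_{f}$ and is a unit at all finite primes, gives $f^{-1}\in(A_{f})_{\mathfrak{p}}$ and hence $T=f^{-1}\cdot(fT)\in(A_{f})_{\mathfrak{p}}$, so $\mathcal{O}_{f}\subseteq(A_{f})_{\mathfrak{p}}$ and the two closures coincide. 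With that lemma inserted, your id\`elic computation closes and gives $Z=\langle\mathfrak{a}_{d-1}\rangle$ directly, without the paper's appeal to the fundamental-unit case; but as written, the step ``only the $\infty_{2}$-component contributes'' is a genuine gap, not a routine omission.
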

       
       \begin{proof} For $f=f_{0}$ a fundamental unit, this result was proved in Proposition 2 of  \cite{DGIII}; we will use the latter to prove the Proposition. Denote
       $ Z_{0} :=$ ${\rm Gal}(H_{A}/H_{\mathcal{O}_{K}})$  $\cong$ ${\rm Ker}( {\sf Cl}_{A} \rightarrow {\sf Cl}_{\mathcal{O}_{K}}  )$ $= \langle \mathfrak{a}_{d_{0}-1}\rangle$, where
         $  \mathfrak{a}_{d_{0}-1} = (f_{0}, f_{0}T,\dots , f_{0}T^{d_{0}-1} ) $.
         In what follows, we simply write $Z={\rm Ker}( {\sf Cl}_{A_{f}} \rightarrow {\sf Cl}_{\mathcal{O}_{f}}  )$ and $Z_{0}={\rm Ker}( {\sf Cl}_{A} \rightarrow {\sf Cl}_{\mathcal{O}_{K}}  ) =
         \langle
         \mathfrak{a}_{d_{0}-1}\rangle$.
       We have the following commutative diagram of short exact sequences:
       \[
       \begin{diagram}
   1 &\rTo &  Z & \rInto &  {\sf Cl}_{A_{f}} \cong {\sf I}_{A_{f}}^{\mathfrak{c}} / {\sf P}_{A_{f}}^{\mathfrak{c}} & \rOnto^{\Upphi} &    
       {\sf Cl}_{\mathcal{O}_{f}} \cong {\sf I}_{\mathcal{O}_{f}}^{\mathfrak{C}} / {\sf P}_{\mathcal{O}_{f}}^{\mathfrak{C}} &\rTo &1 \\ 
 & &   \dTo & &   \dTo_{\Uppsi_{A}} & & \dTo_{\Uppsi_{\mathcal{O}}} & & \\
  1 & \rTo&  Z_{0}=  \langle
         \mathfrak{a}_{d_{0}-1}\rangle &\rInto &  {\sf Cl}_{A} \cong  {\sf I}_{A} / {\sf P}_{A}   & \rOnto_{\Upphi_{0}} & {\sf Cl}_{\mathcal{O}_{K}} \cong  {\sf I}_{\mathcal{O}_{K}} / {\sf P}_{\mathcal{O}_{K}} & \rTo& 1,
        \end{diagram}\]
        in which $ {\sf Cl}_{A_{f}} \cong {\sf I}_{A_{f}}^{\mathfrak{c}} / {\sf P}_{A_{f}}^{\mathfrak{c}} $ follows from Theorem \ref{CGIsos}.  The identification
        ${\sf Cl}_{\mathcal{O}_{K}} \cong  {\sf I}_{\mathcal{O}_{K}} / {\sf P}_{\mathcal{O}_{K}} $ is proved as in Theorem \ref{CGIsos}, where we remark that the proofs of the supporting results go thorugh
        in rank 2 as they are rank independent.  The vertical maps $\Uppsi_{A}, \Uppsi_{\mathcal{O}}$ are induced by ideal expansion, e.g., $ \mathfrak{b}_{f} \longmapsto \mathfrak{b}:=\mathfrak{b}_{f}A_{\infty_{1}}$, 
        which
        are bijections on ideals prime to $\mathfrak{c}$ (see Theorem \ref{bijectionprimetoc}).  
        We have the inclusion 
       $ \langle \mathfrak{a}_{d-1}\rangle\subset Z$ since $ \mathfrak{a}_{d-1}$ contains the $\mathcal{O}_{f}$-unit $f$.  Suppose there is an ideal class $\mathfrak{b}_{f}\in Z\setminus \langle \mathfrak{a}_{d-1}\rangle$;
       without loss of generality,  by Lemma \ref{relprimerep}, we may assume that $\mathfrak{b}_{f}$ is prime to $\mathfrak{a}_{d-1}$.    Then by Theorems  \ref{inverttheo} and \ref{invertiffprimetoc},
        $\mathfrak{b}_{f}$ may be written as a product of primes  that are prime to the conductor.  We claim that 
        $\mathfrak{a}_{d-1}A \subset \mathfrak{a}_{d_{0}-1}$.   Indeed, the generators of $\mathfrak{a}_{d-1}$ are of the form $fT^{j} $, $j=d_{0}m+i\in \{ 0,\dots ,d-1= nd_{0}-1\}$; writing the exponent of $T$ as 
        $j = md_{0} + i$ for $m<n$ and $i<d_{0}$, we
    have
        $fT^{j} = f_{0}^{n}T^{j}  = ( f_{0} T^{d_{0}}) ^{m}\cdot f_{0}^{n-m} T^{i} $.
        Using the quadratic relation for $f_{0}$, $f_{0}T^{d_{0}} \in \mathfrak{a}_{d_{0}-1}$, and this proves the claim.  But by
         Theorem \ref{bijectionprimetoc}, the expansion $\mathfrak{a}_{d-1}A$ is prime, hence  $\mathfrak{a}_{d-1}A=\mathfrak{a}_{d_{0}-1}$.
        In particular,  the expansion  $\mathfrak{b}=\mathfrak{b}_{f}A$ is therefore a product of primes that do not involve $\mathfrak{a}_{d_{0}-1}$.  By the commutativity
        of the diagram, $\mathfrak{b}\in Z_{0} = \langle \mathfrak{a}_{d_{0}-1}\rangle$, contradiction.
        \end{proof}

   The rest of the proof of Theorem \ref{OrderGenTheorem} 
follows {\it mutatis mutandis} that of \cite{DGIII}.



\end{document}